\theoremstyle{definition}
\newtheorem{definition}{Definition}
\newtheorem{remark}[definition]{Remark}
\newtheorem{example}[definition]{Example}
\theoremstyle{mytheorem}
\newtheorem{theorem}[definition]{Theorem}
\newtheorem{lemma}[definition]{Lemma}
\newtheorem{proposition}[definition]{Proposition}
\newtheorem{assumption}[definition]{Assumption}
\renewcommand{\P}{\mathbf{P}} 
\newcommand{\E}{\mathbf{E}}
 \newcommand{\W}{\mathscr{W}}
\renewcommand{\S}{\mathcal{S}}
\newcommand{\Per}{{\rm Per}}
\newcommand{\Vol}{{\rm Vol}}
 \newcommand{\R}{\mathcal{R}}
\newcommand{\corners}{{\rm corners}}
\renewcommand{\u}{\mathbf{u}}
\newcommand{\A}{\mathcal{A}}
\renewcommand{\root}[1]{\underline{#1}}
\newcommand{\N}{\mathsf{N}}
\newcommand{\p}{\mathsf{P}}
\newcommand{\n}{\mathbf{n}}
\newcommand{\C} {\mathcal{C} }
\newcommand{\EC}{Euler characteristic}
\newtheorem{properties} {Properties}
\title {Bicovariograms and Euler characteristic of regular sets}
\author{Rapha\"el  Lachi\`eze-Rey \footnote{  {\sf raphael.lachieze-rey@parisdescartes.fr, Phone: +33\,1\,83\,94\,58\,45
  {Universit\'e Paris Descartes, Sorbonne Paris Cit\'e, MAP5, 45 Rue des Saints-P\`eres, 75006 Paris}}}}
\begin{document}
\date{}
\maketitle


\begin{abstract}

We establish an expression of the \EC~of a $r$-regular planar set in function of some variographic quantities. The usual $\mathcal{C} ^{2}$ framework is relaxed to a $\mathcal{C} ^{1,1}$ regularity assumption, generalising existing local formulas for the \EC. We give also general bounds on the number of connected components of a measurable set of $\mathbb{R}^{2}$ in terms of local quantities. These results are then combined to yield a new expression of the mean \EC~of a random regular set, depending solely on the third order marginals for arbitrarily close arguments. We derive results for level sets of some   moving average processes  and for the boolean model with non-connected polyrectangular  grains in $\mathbb{R}^{2}$. Applications to excursions of smooth bivariate random fields are derived in the companion paper \cite{LacEC2}, and applied for instance to $\C^{1,1}$ Gaussian  fields, generalising standard results.

\end{abstract}

 \textbf{keywords:} {Euler characteristic,   intrinsic volumes,   shot noise processes, boolean model\\}

\textbf{2010 MSC classification:} {52A22, 60D05, 28A75, 60G10}\\

\section*{Introduction}

Physicists and biologists are always in search of numerical indicators reflecting the microscopic and macroscopic behaviour of  tissue, foams, fluids, or  other spatial structures. The Euler characteristic, also called Euler-Poincar\'e characteristic,  is a favoured topological index  because its additivity properties   make it more manageable than connectivity indexes or Betti numbers. It is defined on a set $A\subseteq \mathbb{R}^{2}$ by
\begin{align}
\label{def:euler-cc}
\chi (A)=\#\{\text{bounded components of $A$}\}-\#\{\text{bounded components of $A^{c}$}\}.
\end{align} It is more generally an indicator of the regularity of the set, as an irregular structure is more likely to be shredded in many small pieces, or pierced by many holes, which results in a large value for $ | \chi (A) | $.

As an integer-valued  quantity, the \EC~can be easily measured and used in  estimation and modelisation procedures. It is an important indicator of the porosity of a random media \cite{AMMS, SWRS,Hil02}, it is used in brain imagery \cite{KilFri,TayWor}, astronomy, \cite{Mel90,Schmalzing,Mar15}, and many other disciplines. 
 See also \cite{ABBSW} for a general review of applied algebraic topology. 
In the study of  parametric random media or graphs, a small value of $ | \E \chi (A) | $ indicates the proximity of  the percolation threshold, when that makes sense. See   \cite{Okun}, or \cite{BDJR} in the discrete setting.

The mathematical  additivity property is expressed, for suitable sets $A$ and $B$, by the formula $\chi (A\cup B)=\chi (A)+\chi (B)-\chi (A\cap B)$, which applies recursively to finite such unions. 
In the validity domain of this formula,  the \EC~of a set can therefore be computed by summing local contributions.   The Gauss-Bonnet theorem formalises this notion for $\mathcal{C}^{2}$ manifolds, stating that the \EC~of a smooth set is the integral along the boundary of its Gaussian curvature. Exploiting the local nature of the curvature in applications seems to be a geometric challenge, in the sense that it is not always clear how to express the mean \EC~of a random set $F$ under the form 
\begin{align}
\label{eq:intro-local-EC}
\E \chi (F)=\lim_{\varepsilon \to 0}\int_{\mathbb{R}^{2}}\E\varphi_{\varepsilon } (x,F)dx
\end{align} where $\varphi_{\varepsilon } (x,F)$ only depends on  $ B(x,\varepsilon )\cap F$, where $B(x,\varepsilon )$ is the ball with center $x$ and radius $\varepsilon $. We propose in this paper a new formula of the form above, based on variographic tools, and valid beyond the $\mathcal{C}^{2}$ realm, and then apply it in a random setting. This paper is completely oriented towards probabilistic applications, it is not clear wether our formula has important implications in a purely geometric framework.

\section*{Approach}

In stochastic geometry and stereology, an important body of literature  is concerned with providing formulas for computing the \EC~of random sets, see for instance \cite{HLW,SchWei,KSS,NOP} and references therein.  Defined to be $1$ for every  convex  body, it is extended by additivity as
\begin{align*}
\chi (\cup _{i}C_{i})=-\sum_{I\subseteq [m],I\neq \emptyset }(-1)^{\# I}\mathbf{1}_{\{\cap _{i\in I}C_{i}\neq \emptyset \}}
\end{align*}  for finite unions of such sets. Even though this formula seems highly non-local, it is possible to express it as a sum over local contributions  using the Steiner formula, see (2.3) in \cite{KSS}, but it is difficult to apply it under this form.
 There has also been an intensive research around the \EC~of random fields excursions \cite{AdlSam,AufBen13,Mar15,EstLeo14,AzaWsc,TayWor}, based upon the works of Adler, Taylor, Sammorodnitsky, Worsley, and their co-authors, see the central monograph \cite{AdlTay07}. 
 We discuss in the companion paper \cite{LacEC2} the application of the present results  to level sets of random fields.

In this work, we give a relation between the \EC~of a bounded subset $F$ of $\mathbb{R}^{2}$ and some variographic quantities related to $F$.
Given any two orthogonal unit vectors $\u_{1},\u_{2}$, for $\varepsilon $ sufficiently small,
\begin{align}
\label{eq:intro-marginals}
\chi (F)&=\varepsilon ^{-2}\Big[ \Vol(F\cap (F-\varepsilon \u_{1})^{c}\cap (F-\varepsilon \u_{2})^{c})\\
\notag&\hspace{4cm}-
\Vol(F^{c}\cap (F+\varepsilon \u_{1})\cap (F+\varepsilon \u_{2})) \Big],\end{align}
where $\Vol$ is the $2$-dimensional Lebesgue measure.
This formula is valid under the assumption that $F$ is  {$\C^{1,1}$}, i.e. that  $\partial F$ is  a $\mathcal{C}^{1}$ submanifold of $\mathbb{R}^{2}$ with Lipschitz normal and finitely many connected components. See Example \ref{ex:disc} for the application of this formula to the unit disc.

In the context of a random closed set $F$, call $R_{\varepsilon }$ the right-hand member of \eqref{eq:intro-marginals}. If $\E\sup_{0\leqslant \varepsilon \leqslant 1 }R_{\varepsilon }$ is finite, the value of $\E\chi (F)$ can  be obtained as $\lim_{\varepsilon \to 0}\E R_{\varepsilon }$. The main asset of this formulation regarding classical approaches is that, to compute the mean \EC, one only needs to know the third-order marginal of $F$, i.e. the value of 
\begin{align*}
(x,y,z)\mapsto \P(x,y,z\in F),
\end{align*}
for $x,y,z$ arbitrarily close.
We also give   similar results for the intersection $F\cap W$, where $F$ is a random regular closed set and $W$ is a rectangular (or poly-rectangular) observation window. This step is necessary to apply the results to  a stationary set sampled  on a bounded portion of the plane.

In the present paper, we apply the principles underlying these formulas to obtain the mean \EC~for level sets of moving averages, also called shot noise processes, where the kernels are the indicator functions of random sets which geometry is adapted to the lattice approximation. Even though the geometry of moving averages level sets attracted interest in the recent literature \cite{AST,BieDes}, no such result seemed to exist in the literature \footnote{A  more general result has now been derived by Bierm\'e and Desolneux \cite{BieDesEuler}}. As a by-product, the mean \EC~of the associated boolean model is also obtained.

 These formulas are successfully applied to excursions of smooth random fields in the companion paper \cite{LacEC2}. For instance, in the context of Gaussian fields excursions, one can pass \eqref{eq:intro-marginals} to expectations under the requirement that the underlying field is $\mathcal{C}^{1,1}$, i.e. in the context of bivariate functions $\mathcal{C}^{1}$ with Lipschitz derivatives, plus additional moment conditions. This improves upon the classical theory \cite{AdlTay07} where fields have to be of  class $\mathcal{C}^{2}$ and satisfy a.s. Morse hypotheses. Here again, the resulting formulas only require the knowledge of the field's third order marginals  for arbitrarily close arguments.\\

\section*{Discussion}
 
 Equality \eqref{eq:intro-marginals}  gives in fact a direct relation between the \EC, also known as the Minkowski functional of order $0$, and the function $(x,y)\mapsto \Vol(F\cap (F+x)\cap (F+y))$. We call the latter function \emph{bicovariogram} of $F$, or variogram of order $2$, in reference to the \emph{covariogram} of $F$, defined by $x\mapsto \Vol(F\cap (F+x))$ (see \cite{Lan02,Galerne} or \cite{Serra} for more on covariograms). 
Let $\sigma $ be the normalized Haar measure on the $1$-dimensional circle $\mathcal{S} ^{1}$. The formula
\begin{align*}
 \Per(F)&=  \lim_{\varepsilon \to 0}\int_{\S^{1}}\varepsilon ^{-1}\left( \Vol(F)-\Vol(F\cap( F+\varepsilon u)) \right)\sigma (d\u),
\end{align*}developped in the context of random sets by Galerne \cite{Galerne}, and originating from the theory of functions of bounded variations \cite{AFP},  gives a direct relation between the first order variogram, and the perimeter of a measurable set $F$, which is also the Minkowski functional of order $1$ in the vocabulary of convex geometry. Completing the picture with the fact that $\Vol(F)$ is at the same time the second-order Minkowski functional and the variogram of order $0$, it seems that covariograms and Minkowski functionals are intrinsically linked. This unveils a new field of exploration, and raises the questions of extension to higher dimensions, with higher order variograms, and all Minkowski functionals. 

The present work  is limited to the dimension $2$ because, before engaging in a general theory, one must check that, at least in a particular case of interest, existing  results are improved. In the present work and the companion paper, the results are oriented towards  excursions of bivariate Gaussian fields, as they are of high interest in the literature. Despite the technicalities and difficulties, coming mainly from - 1 - the expression of topological estimates in terms of the regularity of the field,  and - 2 - dealing with boundary effects,  obtaining a formula valid for any random model, and relaxing the usual $\mathcal{C} ^{2}$ hypotheses to $\mathcal{C} ^{1}$ assumptions, provides a sufficiently strong motivation for pushing the theory further. Also, developing methods of proof and upper bounds in dimension $2$ will help developing them in more abstract  spaces.

Another motivation of the present work is that the amount of information   that can be retrieved from the variogram of a set is a central topic in the field of stereology, see for instance the recent work \cite{AveBia09} completing the confirmation of Matheron's conjecture. Through relation \eqref{eq:intro-marginals}, the data of the bicovariogram function with  arguments arbitrarily close to $0$ is sufficient to derive its Euler characteristic, and once again the extension to higher dimensions is a natural interrogation. 
 
  \subsection*{Plan}

The paper is organised as follows. We give in Section \ref{section:deterministic} some tools of image analysis, and the framework for stating our main result, Theorem \ref{th:as-conv}, which proves in particular \eqref{eq:intro-marginals}. These results are used to derive the mean \EC~of shot noise level sets and boolean model with polyrectangular grain. We then provide in Theorem \ref{th:bound-pixel-comp} a uniform bound for the number of connected components of a digitalised set, useful for applying Lebesgue's Theorem. In Section \ref{sec:random sets}, we introduce random closed sets and the conditions under which the previous results give a convenient expression for the mean \EC. Theorem \ref{prop:stationary-set} states hypotheses and results for homogeneous random models.

\section{\EC~of regular sets}
\label{section:deterministic}

Given a measurable set $A$ of $\mathbb{R}^{2}$, denote by $\Gamma  (A)$ the family of bounded connected components of $A$, i.e. the bounded equivalence classes of $A$ under the relation ``$x\in A$ is connected to $y\in A$ if there is a continuous path $\gamma: [0,1]\to A $ such that $\gamma (0)=x,\gamma (1)=y$''. We use the notation $\tilde \gamma   :=\gamma ([0,1])$ to indicate the image of such a path. Call $\mathcal{A}$ the class of sets of $\mathbb{R}^{2}$ such that $\Gamma (A)$ and $\Gamma (A^{c})$ are finite.  We call the sets of $\mathcal{A}$ the \emph{admissible sets of $\mathbb{R}^{2}$}, and  define for $A\in \mathcal{A}$, $$\chi (A)=\#\Gamma (A)-\#\Gamma (A^{c}).$$

The present paper is restricted to the dimension $2$, we therefore will not go further in the algebraic topology and homology theory underlying the definition of the \EC. 
The aim of this section is to provide a lattice approximation $A^{\varepsilon }$ of $A$   for which $\chi (A^{\varepsilon })$ has a tractable expression, and explore under what hypotheses on $A$ we have $\chi (A^{\varepsilon })\to \chi (A)$ as $\varepsilon \to 0$.

\paragraph{Some notation} For $x\in \mathbb{R}^{2}$, call $x_{[1]},x_{[2]}$ its components in the canonical basis. 
Also denote, for $x,y\in \mathbb{R}^{d},d\geqslant 1$, by $[x,y]$ the segment delimited by $x,y$, and $(x,y)=[x,y]\setminus \{x,y\}$.

\subsection{\EC~and image analysis}
\label{sec:image}

Practitioners compute the \EC~of a set $F\subset \mathbb{R}^{d}$ from a digital lattice approximation $F^{\varepsilon }$, where $\varepsilon $ is close to $0$. The computation of $\chi (F^{\varepsilon })$ is based on a linear filtering with a patch containing $2^{d}$ pixels, see \cite{NOP,Sva14,Sva15,Kid06}. 
Determining wether $\chi (F^{\varepsilon })\approx \chi (F)$ is a problem with a long history in image analysis and stochastic geometry.

 For $\varepsilon >0$, call $Z_{\varepsilon }=\varepsilon \mathbb{Z} ^{2}$ the square lattice with mesh $\varepsilon $, and say that two points of $ Z_\varepsilon $ are neighbours if they are at distance $\varepsilon $ (with the additional convention that a point is its own neighbour). Say that two points are connected if there is a finite path of connected points between them. If the context is ambiguous, we  use the terms \emph{grid-neighbour,grid-connected,} to not mistake it with the general $\mathbb{R}^{2}$ connectivity.  Call $\Gamma ^{\varepsilon }(M)$ the class of finite (grid-)connected components of a set $M\subseteq  Z_\varepsilon $.  
  We define in analogy with the continuous case, for $M \subseteq Z_{\varepsilon }$ bounded such that $\Gamma^{\varepsilon } (M),\Gamma ^{\varepsilon }(M^{c})$ are finite, 
\begin{align*}
\chi ^{\varepsilon }(M)=\#\Gamma ^{\varepsilon }(M)-\#\Gamma ^{\varepsilon }(M^{c}),
\end{align*} 
where $M^{c}=Z_{\varepsilon }\setminus M$. Remark in particular that two connected components touching exclusively through a corner are not grid-connected. 

Call $\u=\{\u_{1},\u_{2}\}$ the two canonical unit vectors or $\mathbb{R}^{2}$,
and define  for $A\subseteq \mathbb{R} ^{2},x\in \mathbb{R}^{2},\varepsilon >0$,
\begin{align*}
\Phi ^{\varepsilon } _{\text{out}}(x;A)&=\mathbf{1}_{\{x\in A,x+\varepsilon \u_{1}\notin A,x+\varepsilon \u_{2}\notin A\}},\\
\Phi ^{\varepsilon }_{\text{in}}(x;A)&=\mathbf{1}_{\{x\notin A,x-\varepsilon \u_{1}\in A,x-\varepsilon \u_{2}\in A\}},\\
\Phi _{X}^{\varepsilon }(x;A)&=\mathbf{1}_{\{x\in A;x+\varepsilon \u_{1}\notin A,x+\varepsilon \u_{2}\notin A,x+\varepsilon (\u_{1}+\u_{2})\in A\}}.
\end{align*}
Seeing also these functionals as discrete measures, define  
\begin{align*}
\Phi _{\text{out}}^{\varepsilon }(A)&=\sum_{x\in Z_{\varepsilon }}\Phi _{\text{out}}^{\varepsilon }(x;A),\\
\Phi _{\text{in}}^{\varepsilon }(A)&=\sum_{x\in Z_{\varepsilon  }}\Phi _{\text{in}}^{\varepsilon }(x;A),\\
\Phi _{X}^{\varepsilon }(A)&=\sum_{x\in Z_{\varepsilon }}\Phi _{X}^{\varepsilon }(x;A).
\end{align*}

The subscripts \emph{in} and \emph{out} refer to the fact  that $\Phi ^{\varepsilon } _{\text{out}}(A)$ counts the number of vertices of $A\cap Z_{\varepsilon }$ pointing outwards towards North-East, and $\Phi  ^{\varepsilon }_{\text{in}}(A) $ is the number of vertices pointing inwards towards South-West. 
Define for $A\subseteq \mathbb{R}^{2}$
 $$\chi ^{\varepsilon }(x;A)=\Phi _{\text{out}}^{\varepsilon }(x;A)-\Phi _{\text{in}}^{\varepsilon }(x;A).$$

The functional $\Phi _{X}^{\varepsilon }(A)$ is intended to count the number of \emph{$X$-configurations}. Such configurations are a nuisance for obtaining the \EC~by summing local contributions. Call $\mathcal{A}(Z_{\varepsilon })$ the class of bounded $M\subseteq Z_{\varepsilon }$ such that   $\Phi _{X}^{\varepsilon }(M)=\Phi _{X}^{\varepsilon }(M^{c})=0$, with $M^{c}=Z_{\varepsilon }\setminus M$.

\begin{lemma}
\label{lm:chi-eps-phi-0}
For $M\in \mathcal{A}(Z_{\varepsilon })$,
\begin{align}
\label{eq:discrete-EC}
\chi ^{\varepsilon }(M)=\sum_{x\in Z_{\varepsilon }}\chi ^{\varepsilon }(x;M).
\end{align}
\end{lemma}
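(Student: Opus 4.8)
The plan is to route the identity through the planar cubical complex generated by $M$ and the Euler--Poincar\'e relation, reducing both sides of \eqref{eq:discrete-EC} to the alternating count of cells. By homogeneity of the lattice I may rescale and assume $\varepsilon =1$, writing $Z_{\varepsilon }=\mathbb{Z}^{2}$. To $M$ I associate the complex $C(M)$ whose $0$-cells are the points of $M$, whose $1$-cells are the unit segments $[x,x+\u_{i}]$ with both endpoints in $M$ (for $i=1,2$), and whose $2$-cells are the closed unit squares all four of whose corners lie in $M$. Writing $V,E,F$ for the numbers of cells of each dimension, the alternating sum $V-E+F$ is the topological \EC~of $C(M)$, equal to $b_{0}-b_{1}$ where $b_{0}$ and $b_{1}$ count the connected components and the independent $1$-cycles (holes) of $C(M)$. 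I will prove the two equalities $\chi ^{\varepsilon }(M)=V-E+F$ and $\sum_{x}\chi ^{\varepsilon }(x;M)=V-E+F$ separately.

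For the first equality I identify the two combinatorial counts defining $\chi ^{\varepsilon }(M)$ with the two Betti numbers. Since an edge of $C(M)$ joins exactly the grid-neighbouring pairs of $M$, paths in the $1$-skeleton correspond to grid-connected paths, so that $\#\Gamma ^{\varepsilon }(M)=b_{0}$ (all components are finite because $M$ is bounded). The delicate point is $\#\Gamma ^{\varepsilon }(M^{c})=b_{1}$: every bounded component of $\mathbb{R}^{2}\setminus |C(M)|$ must contain an interior lattice point, since a hole surrounding a single empty square would force its four bounding edges, hence that square itself, to belong to $C(M)$; I then claim that these interior points are precisely the finite grid-components of $M^{c}$, in bijection with the holes. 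Granting this, $\chi ^{\varepsilon }(M)=\#\Gamma ^{\varepsilon }(M)-\#\Gamma ^{\varepsilon }(M^{c})=b_{0}-b_{1}=V-E+F$.

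For the second equality I charge each cell of $C(M)$ to its lower-left corner, so that $V-E+F=\sum_{x}g(x)$ with $g(x)=\mathbf{1}_{\{x\in M\}}-\mathbf{1}_{\{x,x+\u_{1}\in M\}}-\mathbf{1}_{\{x,x+\u_{2}\in M\}}+\mathbf{1}_{\{x,x+\u_{1},x+\u_{2},x+\u_{1}+\u_{2}\in M\}}$. A direct inspection of the $2^{4}$ configurations of the square with lower-left corner $x$ shows that $g(x)=1$ exactly in the out-configuration counted by $\Phi ^{\varepsilon }_{\text{out}}(x;M)$, that $g(x)=-1$ exactly when $x,x+\u_{1},x+\u_{2}\in M$ and $x+\u_{1}+\u_{2}\notin M$, and $g(x)=0$ otherwise. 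Comparing with $\chi ^{\varepsilon }(x;M)=\Phi ^{\varepsilon }_{\text{out}}(x;M)-\Phi ^{\varepsilon }_{\text{in}}(x;M)$ and reindexing the inward corners by $y=x-\u_{1}-\u_{2}$, the positive parts coincide, while the two negative counts differ precisely by the number of points $y$ with $y\notin M$, $y+\u_{1}\in M$, $y+\u_{2}\in M$, $y+\u_{1}+\u_{2}\notin M$, that is by $\Phi ^{\varepsilon }_{X}(M^{c})$. Hence $\sum_{x}\chi ^{\varepsilon }(x;M)=(V-E+F)-\Phi ^{\varepsilon }_{X}(M^{c})=V-E+F$, the last step invoking $M\in \mathcal{A}(Z_{\varepsilon })$.

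The hard part will be the topological bookkeeping behind $\#\Gamma ^{\varepsilon }(M^{c})=b_{1}$, where one must rule out the two diagonal pathologies. An X-configuration of $M^{c}$ would let two background points that are not grid-connected communicate through the open diagonal of an empty square, geometrically merging two holes that the grid counts separately; an X-configuration of $M$ would let a bounded background region leak to the unbounded component past two diagonally touching points of $M$. Both are excluded exactly by $\Phi ^{\varepsilon }_{X}(M)=\Phi ^{\varepsilon }_{X}(M^{c})=0$, and establishing that under this exclusion the finite grid-components of $M^{c}$ are in bijection with the bounded complementary regions of $|C(M)|$ -- an essentially Jordan-curve-type planar argument -- is the crux. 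The cell count of the third paragraph, by contrast, is a finite and purely algebraic verification.
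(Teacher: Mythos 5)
Your proposal is correct and follows essentially the same route as the paper: both sides are reduced to the planar Euler formula $V-E+F$ for the cubical complex generated by $M$, followed by an enumeration of the $2^{4}$ local configurations at each lattice point. The topological step you flag as the crux (finite grid-components of $M^{c}$ correspond to the holes of the complex, which is where $\Phi_{X}^{\varepsilon}(M)=0$ enters) is exactly the part the paper also leaves as ``well known'', and your observation that the local sum differs from $V-E+F$ by precisely $\Phi_{X}^{\varepsilon}(M^{c})$ is a slightly sharper bookkeeping of where each half of the hypothesis is used.
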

{
 \begin{proof}
It is well known that, viewing $M$ as a subgraph of $\mathbb{Z} ^{2}$, the \EC~of $M$ can be computed as $\chi (M)=V-E+F$ where $V$ is the number of vertices of $M$, $E$ is its number of edges, and $F$ is the number of facets, i.e. of points $x\in M$ such that $x+\varepsilon \u_{1},x+\varepsilon \u_{2},x+\varepsilon (\u_{1}+\u_{2})\in M$. We therefore have 
\begin{align*}
\chi ^{\varepsilon }(M)=\sum_{x\in \mathbb{Z} ^{2}}\left[
\mathbf{1}_{\{x\in M\}}-\sum_{i=1}^{2}\mathbf{1}_{\{\{ x,x+\varepsilon \u_{i}\}\text{\rm{ is an edge of $M$}}\}}+\mathbf{1}_{\{x\text{\rm{ is the bottom left corner of a facet}}\}}
\right].
\end{align*}
For each $x$, the summand is in $\{-1,0,1\}$ and can be computed in function of the configuration\\
 $(\mathbf{1}_{\{x\in M\}},\mathbf{1}_{\{x+\varepsilon \u_{1}\in M\}},\mathbf{1}_{\{x+\varepsilon \u_{2}\in M\}} ,\mathbf{1}_{\{x+\varepsilon (\u_{1}+\u_{2})\in M\}})\in \{0,1\}^{4}$. Enumerating all the possible configurations and  noting that the configurations $(1,0,0,1)$ and $(0,1,1,0)$ do not occur due to the assumption $M\in \mathcal{A}(Z_{\varepsilon })$, it yields that indeed only the configurations $(1,0,0,0)$ give  $+1$ and only the configurations $(1,1,1,0)$ give  $-1$, which gives the conclusion. 
\end{proof}
 
This formula amounts to  a linear filtering of the set by a $2\times 2$ discrete patch, and is already known and used in image analysis and in physics on discrete images. 
Analogues of this formula \cite{AKPM,NOP}  exist also  in higher dimensional grids, but the dimension $2$ seems to be the only one where an anisotropic form is valid, see \cite{Sva15} for a discussion on this topic. The anisotropy is not indispensable to the  results  discussed in this paper, but gives more generality and simplifies certain formulas. An  isotropic formula can be obtained by averaging over the $4$ directions.}

Given a  subset $A$ of $\mathbb{R}^{2}$ and $\varepsilon >0$, we are interested here in the topological properties of the \emph{Gauss digitalisation of $A$}, defined  by $  [A]^{\varepsilon }:=A\cap Z_{\varepsilon }$. Define $\p=\varepsilon [-\frac{1}{2},\frac{1}{2})^{2}$, and 
\begin{align*}
A^{\varepsilon }=\bigcup _{x\in [A]^{\varepsilon }}(x+\p)
\end{align*}is the \emph{Gauss reconstruction } of $A$ based on $Z_{\varepsilon }$. In some unambiguous cases, the notation is simplified to $[A]=[A]^{\varepsilon }$.
In this paper, we refer to a \emph{pixel} as a set $\p+x$, for $x\in \mathbb{R}^{2}$ not necessarily in $Z_{\varepsilon }$. 

\paragraph{Notation}
We also use the notation, for $x,y\in Z_{\varepsilon }$, $\llbracket x,y \rrbracket=[x,y]\cap  Z_{\varepsilon }, \llparenthesis x,y\rrparenthesis=\llbracket x,y\rrbracket\setminus\{x,y\}$.

\begin{properties}
\label{ppt:gauss}
For $A\subseteq \mathbb{R}^{2}$, $A^{\varepsilon }$ is connected in $\mathbb{R}^{2}$ if $[A]^{\varepsilon }$ is grid-connected in $\mathbb{Z} ^{2}$. The converse might not be true because of pixels touching through a corner, but this subtlety does not play any role in this paper, because sets with $X$-configurations are systematically discarded. We also have $\chi ^{\varepsilon }([A])=\chi (A^{\varepsilon })$ if $[A]\in \mathcal{A}(Z_{\varepsilon })$ because connected components of $A$ (resp. $A^{c}$) can be uniquely associated to grid-connected components of $[A]$ (resp. $[A^{c}]$).

Most set operations commute with the operators $(.)^{\varepsilon },[\cdot ]^{\varepsilon }$. For any $A,B\subseteq \mathbb{R}^{2}$, $[A\cup B]^{\varepsilon }=[A]^{\varepsilon }\cup [B]^{\varepsilon },[A\cap B]^{\varepsilon }=[A]^{\varepsilon }\cap [B]^{\varepsilon },[\mathbb{R}^{2}\setminus A]^{\varepsilon }=Z_{\varepsilon }\setminus [A]^{\varepsilon },$ and those properties are followed by the reconstructions $(A\cup B)^{\varepsilon }=A^{\varepsilon }\cup B^{\varepsilon }, (A\cap B)^{\varepsilon }=A^{\varepsilon }\cap B^{\varepsilon },(A\setminus B)^{\varepsilon }=A^{\varepsilon }\setminus B^{\varepsilon }.$
\end{properties}

\subsection{Variographic quantities}

{The question raised in the next section is wether, for $A\subset \mathbb{R}^{2}$, 
 $
\chi^{\varepsilon } ([A]^{\varepsilon })= \chi (A)$
 for $\varepsilon $ sufficiently small, and the result depends crucially on  the regularity of $A$'s boundary.  
 A remarkable asset of  formula \eqref{eq:discrete-EC} is its nice transcription in terms of variographic tools. Let us introduce the related notation.}   For $x_{1},\dots ,x_{q},y_{1},\dots ,y_{m}\in \mathbb{R}^{2}$,   $A$ a measurable subset of $\mathbb{R}^{2}$, define the \emph{polyvariogram} of order $(q,m)$,  
\begin{align*}
\delta _{x_{1},\dots ,x_{q}}^{y_{1},\dots ,y_{m}}(A):=\Vol (  (A+x_{1})\cap\dots \cap  (A+x_{q})\cap (A+y_{1})^{c}\cap \dots \cap (A+y_{m})^{c}).
\end{align*} The variogram of order $(2,0)$  is known as the covariogram of $A$ (see \cite[Chap. 3.1]{Lan02}), and we designate here by  \emph{bicovariogram}  of $A$ the polyvariogram of order $(3,0)$. A polyvariogram of  order $(q,m)$ can be written as a linear combination of  variograms with orders $(q_{i},0)$ for appropriate numbers $q_{i}\leqslant q$. For instance for $x,y\in \mathbb{R}^{2}, A
\subset \mathbb{R}^{2}$ measurable with finite volume, we have  
\begin{align*}
\delta_0  ^{x,y}(A)
&=\delta_0 (A)-\delta_{0 ,x}(A)-\delta_{0 ,y}(A)+\delta_{0 ,x,y}(A).
\end{align*}A similar notion can be defined on $Z_{\varepsilon }$ endowed with the counting measure: for $M\subseteq Z_{\varepsilon },x_{1},\dots ,x_{q},y_{1},\dots ,y_{m}\in Z_{\varepsilon }$, 
\begin{align*}
\tilde \delta  _{x_{1},\dots ,x_{q}}^{y_{1},\dots ,y_{m}}(M):=\# (  (M+x_{1})\cap\dots \cap  (M+x_{q})\cap (M+y_{1})^{c}\cap \dots \cap (M+y_{m})^{c}).
\end{align*} 
  Lemma \ref{lm:chi-eps-phi-0} directly yields for $M\in \mathcal{A}(Z_{\varepsilon }) $,
\begin{align*}
\chi ^{\varepsilon }(M)=\tilde\delta_0 ^{-\varepsilon \u_{1},-\varepsilon \u_{2}}(M)-\tilde\delta ^{0}_{\varepsilon \u_{1},\varepsilon \u_{2}}(M).
\end{align*} 
 We will see in the next section that for a sufficiently regular set $F\subseteq \mathbb{R}^{2}$, the analogue equality $\chi (F)=\delta_0 ^{-\varepsilon \u_{1},-\varepsilon \u_{2}}(F) -\delta_{\varepsilon \u_{1},\varepsilon \u_{2}}^{0}(F)$ holds   for $\varepsilon $  small.
  
 \subsection{\EC~of $\rho $-regular sets}

  It is known in image morphology that the digital approximation of the \EC~is in general badly behaved when the set $F\in \mathcal{A}(\mathbb{R}^{2})$ possesses some inwards or outwards sharp angles, i.e. we don't have $\chi^{\varepsilon } (  [F]^{\varepsilon })\to \chi (F)$ as $\varepsilon \to 0$, the boolean model being the typical example of such a failure, see \cite[Chap. XIII - B.6]{Serra} or \cite{Sva14}.  Sets nicely behaved with respect to digitalisation are called \emph{morphologically open and closed} (MOC), or $\rho $-regular, see \cite[Chap.V-C]{Serra},\cite{Sva15}.

   Before giving the characterisation of such sets, let us introduce some morphological concepts, see for instance \cite{Lan02,Serra} for a more detailed account of mathematical morphology. 
  We state below results in $\mathbb{R}^{d}$ because  the arguments are based on purely metric considerations that apply identically in any dimension.

 \paragraph{Notation} The ball with centre $x$ and radius $r$ in the $\infty $-metric $\|\cdot \|_{\infty }$ of $\mathbb{R}^{d}$ is noted $B[x,r]$. The Euclidean ball with centre $x$ and radius $r$ is noted $B(x,r)$.
  For $r>0,A\subseteq \mathbb{R}^{d}$, define 
\begin{align*}
A^{\oplus r}:&=\{x+y: x\in A, y\in B(0,r)\}.\\
A^{\ominus r}:&=\{x\in A:B(x,r)\subseteq A\}=((A^{c})^{\oplus r})^{c}.\\
\end{align*} 
We also note $\partial A,\text{\rm{cl}}(A),\text{\rm{int}}(A)$ for resp. the topological boundary, closure, and interior of a set $A$.
Note $\S^{1}$ the unit circle in $\mathbb{R}^{2}$.\\ 
 
 Say that a closed set $F$ has an \emph{inside  rolling ball} if for each $x\in  F$, there is a closed {Euclidean} ball $B$ of radius $r$  contained in $F$  such that $x\in B$, and say that $F$ has an \emph{outside rolling ball} if $\text{cl}(F^{c})$ has an inside rolling ball.   
  
A set $F$ has \emph{reach} at least $r\geqslant 0$ if for each point $x$ at distance $\leqslant r$ from $F$, there is a unique point $y\in F$ such that $d(x,y)=d(x,F)$. We note in this case $y=\pi _{F}(x)$.
Call \emph{reach} of $F$ the supremum of the $r\geqslant  0$ such that $F$ has reach at least $r$. 
 The proposition below gathers some elementary facts about sets satisfying those rolling ball properties, the proof is left to the reader. 
 \begin{proposition} 
\label{prop:poperties-rollingball}
Let $\rho >0$ and $F$ be a closed set of $\mathbb{R}^{d}$ with an inside and an outside rolling ball of radius $\rho $. Then there is an outwards normal vector $\n_{F}(x)$ in each $x\in \partial F$. For $r\leqslant \rho $, $B_{x}:=B(x-r\n_{F}(x),r),$ resp. $B'_{x}:=B(x+r\n_{F}(x),r)$ is the unique inside, resp. outside rolling ball in $x$. Also, $\text{\rm{int}}(B'_{x})\subseteq F^{c}$.
 Furthermore, $\partial F,F$ and $F^{c}$ have reach at least $r$ for each $r<\rho $.
\end{proposition}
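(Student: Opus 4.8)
The plan is to reduce every assertion to one elementary fact about balls, which I will call the \emph{antipodality lemma}: if two closed balls $B(a,\alpha)$ and $B(b,\beta)$ have disjoint interiors and share a point $x\in\partial B(a,\alpha)\cap\partial B(b,\beta)$, then $\|a-b\|=\alpha+\beta$ and $x$ lies on $[a,b]$ with $\|x-a\|=\alpha,\ \|x-b\|=\beta$, so that $(x-a)/\alpha=(b-x)/\beta$. This is immediate: disjoint interiors force $\|a-b\|\ge\alpha+\beta$, the triangle inequality gives $\|a-b\|\le\|a-x\|+\|x-b\|=\alpha+\beta$, and the equality case pins $x$ to the segment. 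For the existence of the normal, I would fix $x\in\partial F$; the inside ball $B(c_1,\rho)\subseteq F$ through $x$ must have $\|x-c_1\|=\rho$ (otherwise $x\in\text{int}(F)$), and likewise the outside ball $B(c_2,\rho)\subseteq\text{cl}(F^{c})$ has $\|x-c_2\|=\rho$. Since $\text{int}(B(c_1,\rho))\subseteq\text{int}(F)$ and $\text{int}(B(c_2,\rho))\subseteq\text{int}(\text{cl}(F^{c}))$ are disjoint (because $\text{int}(F)\cap\text{cl}(F^{c})=\emptyset$), the lemma with $\alpha=\beta=\rho$ applies and produces a well-defined unit vector $\n_F(x):=(c_2-x)/\rho=(x-c_1)/\rho$; the fact that $\partial F$ is sandwiched between the two balls tangent to the hyperplane orthogonal to $\n_F(x)$ at $x$ identifies it as the outward normal.

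For the formula and uniqueness, the first ingredient is the nesting inclusion $B(x\mp r\n_F(x),r)\subseteq B(x\mp\rho\n_F(x),\rho)$ for $r\le\rho$, which holds since $\|y-(x-r\n_F(x))\|\le r$ implies $\|y-(x-\rho\n_F(x))\|\le r+(\rho-r)=\rho$. This gives $B_x\subseteq F$ and $B'_x\subseteq\text{cl}(F^{c})$, both passing through $x$, hence existence. For uniqueness of the inside ball, any $B(c,r)\subseteq F$ through $x$ has $\|c-x\|=r$, and its interior is disjoint from the outside ball $B(x+\rho\n_F(x),\rho)$, so $\|c-(x+\rho\n_F(x))\|\ge r+\rho$; the antipodality equality then forces $c=x-r\n_F(x)$, and the outside case is symmetric. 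The inclusion $\text{int}(B'_x)\subseteq F^{c}$ I would prove by contradiction: a point $p\in\text{int}(B'_x)\cap F$ carries its own inside ball $B(c_p,\rho)\subseteq F$ whose interior is disjoint from $B(x+\rho\n_F(x),\rho)$, yet $\|c_p-(x+\rho\n_F(x))\|\le\rho+\|p-(x+\rho\n_F(x))\|<2\rho$, contradicting that disjointness.

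The substantive part is the reach bound, and the plan is to show the metric projection is single-valued on $\{z:d(z,F)\le r\}$ for $r<\rho$. Given $z\notin F$ with $d(z,F)=s\le r<\rho$ and a nearest point $y$, the open ball $B(z,s)$ misses $F$, so its interior is disjoint from the inside ball at $y$; the antipodality lemma (now with radii $s$ and $\rho$) yields $z=y+s\,\n_F(y)$, i.e. $z$ sits on the outward normal ray. For uniqueness, any other nearest point $y'$ lies in $F$, hence outside $\text{int}(B(y+\rho\n_F(y),\rho))\subseteq F^{c}$, so $\|y'-(y+\rho\n_F(y))\|\ge\rho$; expanding this and inserting $\|y'-z\|=s$ together with $z=y+s\,\n_F(y)$ forces $\langle y'-z,\n_F(y)\rangle\le -s$, which saturates Cauchy--Schwarz and gives $y'=z-s\,\n_F(y)=y$. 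Thus $F$ has reach $\ge r$. Applying the identical argument to $\text{cl}(F^{c})$ gives its bound, after observing that the hypotheses make both $F$ and $\text{cl}(F^{c})$ regular closed, so each inherits both an inside and an outside rolling ball (the outside data of $F$ being the inside data of $\text{cl}(F^{c})$, and conversely). Finally, for $z$ with $d(z,\partial F)\le r$ the nearest point of $\partial F$ coincides with the nearest point of whichever of $F$ or $\text{cl}(F^{c})$ lies across the boundary from $z$, so uniqueness transfers and $\partial F$ has reach $\ge r$ as well.

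The hard part is not any single estimate but packaging these repeated ``two tangent balls are antipodal, so the triangle inequality is tight'' computations into one reusable lemma that simultaneously delivers the normal, the explicit centres, and the reach equality; once that lemma and the nesting inclusion are recorded, the remaining work is bookkeeping. The one point requiring brief care is the identification $\partial F=\partial(\text{cl}(F^{c}))$ and the regular-closedness of $F$, which is precisely what licenses reusing the outside-ball data of $F$ as inside-ball data of $\text{cl}(F^{c})$.
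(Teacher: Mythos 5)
The paper does not actually supply a proof of this proposition --- it states that ``the proof is left to the reader'' --- so there is nothing to compare your argument against; what matters is whether your argument stands on its own, and it does. Organising everything around the single ``antipodality'' observation (two internally disjoint balls sharing a boundary point are tangent, with the common point on the segment of centres) is an efficient way to get, in one stroke, the existence and well-definedness of $\n_{F}(x)$, the explicit centres $x\mp r\n_{F}(x)$, and the collinearity $z=y+s\,\n_{F}(y)$ needed for the reach bound; the subsequent Cauchy--Schwarz saturation for uniqueness of the metric projection is correct, as is the reduction of the reach of $\partial F$ and of $F^{c}$ to the two one-sided cases. Two points deserve one extra sentence each if this were written out in full: first, the well-definedness of $\n_{F}(x)$ should be made explicit --- a priori there could be several inside (or outside) rolling balls at $x$, but since every inside centre is forced by antipodality to be the reflection of every outside centre through $x$, all inside centres coincide; second, your justification that $\|x-c_{2}\|=\rho$ ``otherwise $x\in\text{int}(\text{cl}(F^{c}))$'' is not by itself a contradiction with $x\in\partial F$ for general closed sets, but it becomes one once $\|x-c_{1}\|=\rho$ is established (a neighbourhood of $x$ then meets $\text{int}(B(c_{1},\rho))\subseteq\text{int}(F)$, which is disjoint from $\text{cl}(F^{c})$), so the order in which you prove the two tangencies matters. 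Your closing remark that the rolling-ball hypotheses force $F$ and $\text{cl}(F^{c})$ to be regular closed, hence $\partial F=\partial(\text{cl}(F^{c}))$ and the outside data of one is the inside data of the other, is exactly the point that licenses the symmetric application of the argument, and is worth keeping.
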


  We reproduce here partially the synthetic formulation of Blashke's theorem by Walther \cite{Walther}, which gives a connection between rolling ball properties and the regularity of the set.
\begin{theorem}[Blashke]
\label{th:blaschke} 
Let $F$ be a compact and connected subset of $\mathbb{R}^{d}$. 
Then for $\rho >0$  the following assertions are equivalent.\begin{itemize}
\item[(i)] $ \partial F$ is a compact $(d-1)$-dimensional $\mathcal{C}^{1}$ submanifold of $\mathbb{R}^{2}$ such that the mapping $\n_{F}(\cdot )$, which associates  to $x\in \partial F$ its outward normal vector to $F$, $\n_{F}(x),$ is $\rho ^{-1}$-Lipschitz,
\item[(ii)] $F$ has inside and outside rolling ball of radius $\rho $,
\item[(iii)] $(F^{\ominus r })^{\oplus r }=(F^{\oplus r })^{\ominus r }=F$, $r<\rho $.
\end{itemize}

\end{theorem}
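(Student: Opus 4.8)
The plan is to close the cycle of equivalences through the two bi-implications $(ii)\Leftrightarrow(iii)$ and $(i)\Leftrightarrow(ii)$, treating the first as a morphological unfolding and the second as the geometric heart, using Proposition~\ref{prop:poperties-rollingball} for the structure available under the rolling-ball hypothesis. For $(ii)\Leftrightarrow(iii)$ I would first observe that $F^{\ominus r}$ is the set of centres of radius-$r$ Euclidean balls contained in $F$, so that $(F^{\ominus r})^{\oplus r}=\bigcup\{B(c,r):B(c,r)\subseteq F\}$ is exactly the union of all inside $r$-balls; hence $(F^{\ominus r})^{\oplus r}=F$ means precisely that every point of $F$ belongs to some $r$-ball contained in $F$, i.e. $F$ has an inside rolling ball of radius $r$. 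Taking complements, $(F^{\oplus r})^{\ominus r}=F$ is the same assertion for $\mathrm{cl}(F^c)$, i.e. the outside rolling ball. The discrepancy between the fixed radius $\rho$ of $(ii)$ and the range $r<\rho$ of $(iii)$ is removed by a shrinking argument: if $x\in B(c,\rho)\subseteq F$, sliding the centre along $[x,c]$ to distance $r$ from $x$ produces a ball $B(c',r)\subseteq B(c,\rho)\subseteq F$ still containing $x$.

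For $(ii)\Rightarrow(i)$, Proposition~\ref{prop:poperties-rollingball} already supplies at each $x\in\partial F$ an outward normal $\n_F(x)$ together with the tangent balls $B_x=B(x-\rho\n_F(x),\rho)$ and $B'_x=B(x+\rho\n_F(x),\rho)$, whose open interiors lie respectively in $\mathrm{int}(F)$ and in $F^c$ and therefore meet $\partial F$ nowhere. Consequently any $y\in\partial F$ satisfies $|y-(x\mp\rho\n_F(x))|\ge\rho$; expanding both squared inequalities yields the second-order pinching
\[
|\langle y-x,\n_F(x)\rangle|\le\tfrac{1}{2\rho}\,|y-x|^2 .
\]
This confines $\partial F$ to a parabolic neighbourhood of the affine tangent plane $x+\n_F(x)^\perp$, so that near $x$ the boundary is the graph of a function over that plane with vanishing gradient at $x$; continuity of $\n_F$ (established next) then promotes this to a genuine $\mathcal{C}^1$ submanifold structure.

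The Lipschitz bound on the Gauss map is obtained purely from ball disjointness. Writing $a=\n_F(x)$, $b=\n_F(y)$, $d=y-x$, the inclusions $\mathrm{int}(B_y)\subseteq F$ and $\mathrm{int}(B'_x)\subseteq F^c$ force $|c_y-c'_x|\ge 2\rho$ for the centres $c_y=y-\rho b$, $c'_x=x+\rho a$, i.e. $|d-\rho(a+b)|\ge 2\rho$; the symmetric pair $\mathrm{int}(B_x)\subseteq F$, $\mathrm{int}(B'_y)\subseteq F^c$ gives $|d+\rho(a+b)|\ge 2\rho$. Adding the two squared inequalities cancels the cross term and leaves $\rho^2|a+b|^2\ge 4\rho^2-|d|^2$, whence, using $|a+b|^2=4-|a-b|^2$, the sharp estimate $|\n_F(x)-\n_F(y)|\le\rho^{-1}|x-y|$. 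For the converse $(i)\Rightarrow(ii)$, the $\rho^{-1}$-Lipschitz normal bounds the (a.e.-defined) curvature of $\partial F$ by $\rho^{-1}$; fixing $x_0\in\partial F$ and comparing the arclength-parametrised boundary against the sphere $\partial B(x_0-\rho\n_F(x_0),\rho)$ through a second-order Taylor argument shows the boundary stays outside this ball, so the ball lies in $F$ and is the inside rolling ball, the outside ball being symmetric.

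I expect the main obstacle to be the \emph{global} control in $(i)\Rightarrow(ii)$: the curvature comparison only guarantees that the candidate ball is contained in $F$ near the tangency point, and one must still exclude that a distant portion of $\partial F$ curls back into the ball. Ruling this out requires the absence of boundary self-approach below scale $\rho$ — equivalently the positive-reach property already recorded in Proposition~\ref{prop:poperties-rollingball} — together with compactness and connectedness of $\partial F$ to propagate the local tangency into a genuine rolling ball. The analogous globalisation underlies the graph representation in $(ii)\Rightarrow(i)$, where one must verify that the orthogonal projection of $\partial F$ onto the tangent plane is locally injective; both reduce to the same non-self-intersection fact, which I would isolate once and reuse.
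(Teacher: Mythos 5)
First, note that the paper itself offers no proof of this statement: it is quoted (``reproduced partially'') from Walther \cite{Walther}, so there is no internal argument to compare yours against. Judged on its own terms, a substantial part of your proposal is sound. The identification of $(F^{\ominus r})^{\oplus r}$ with the union of all radius-$r$ balls contained in $F$, and hence of (iii) with the rolling-ball property via complementation, is the right morphological reading (modulo a small closure subtlety for the outside ball, and a compactness/limit argument to pass from ``radius $r$ for every $r<\rho$'' back to radius $\rho$ in $(iii)\Rightarrow(ii)$, which you only address in one direction). The derivation of the parabolic pinching $|\langle y-x,\n_F(x)\rangle|\leqslant \tfrac{1}{2\rho}|y-x|^{2}$ from the disjointness of $\partial F$ and the open tangent balls, and the computation giving $|\n_F(x)-\n_F(y)|\leqslant \rho^{-1}|x-y|$ by adding the two squared centre-distance inequalities, are both correct and are essentially the standard route to $(ii)\Rightarrow(i)$ (the surjectivity of the projection onto the tangent plane in the graph representation still needs an argument, but injectivity follows from the tangent balls as you indicate).

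The genuine gap is in $(i)\Rightarrow(ii)$, and it is a circularity rather than a mere omission. You correctly identify that the second-order comparison only keeps $\partial F$ outside the candidate ball \emph{locally}, and that one must exclude a distant arc of $\partial F$ re-entering the ball; but you propose to rule this out using ``the positive-reach property already recorded in Proposition~\ref{prop:poperties-rollingball}''. That proposition takes the inside and outside rolling balls --- i.e.\ assertion (ii) --- as its \emph{hypothesis} and derives positive reach as a conclusion, so it cannot be invoked while (ii) is the thing being proved. To close this implication you must establish positive reach (or the global non-self-approach of $\partial F$ below scale $\rho$) directly from the $\rho^{-1}$-Lipschitz Gauss map. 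One standard way: suppose $B(c,r)$ with $c=x-r\n_F(x)$, $r<\rho$, meets $\partial F$ at a point other than $x$; replace $r$ by $s=d(c,\partial F)<r$ and let $y^{*}$ realise the distance, so that $B(c,s)\subseteq F$ is internally tangent to $\partial F$ at $y^{*}$ and $\n_F(y^{*})=(c-y^{*})/s\cdot(-1)$ points away from $c$; the local graph representation at $y^{*}$ (available from (i) alone, with $|f(u)|\leqslant \tfrac{1}{2\rho}|u|^{2}$ by integrating the Lipschitz bound on the normal) then contradicts tangency of a ball of radius $s<\rho$ from the wrong side. Without some such argument, $(i)\Rightarrow(ii)$ is not proved, and since $(iii)$ is reached only through $(ii)$ in your scheme, the cycle of equivalences does not close.
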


  \begin{definition}
  \label{def:regular}
  \emph{Let }$F$ be a compact set of $\mathbb{R}^{d}$. Assume that $F$ has finitely many connected components and satisfies  either (i),(ii) or (iii) for some $\rho_{0}>0 $. Since the connected components of $F$ are at pairwise positive distance, each of them satisfies (i),(ii), and (iii), and therefore the whole set $F$ satisfies  (i),(ii) and (iii) for some $\rho >0$, which might be smaller than $\rho_{0} $. Such a set is said to belong to  \emph{Serra's regular class}, see the monograph of Serra \cite{Serra}. We will say that such a set is \emph{$\rho $-regular}, or simply  \emph{regular}.
   \end{definition}

\paragraph{Polyrectangles}

An aim of the present paper is to advocate the power of covariograms for computing the Euler characteristic of a regular set in the plane, we therefore need to give results that can be compared with the literature. Since many applications are  concerned with stationary random sets on the whole plane, we have to study the intersection of random regular sets with bounded windows, and assess the quality of the approximation.

To this end, call admissible rectangle of $\mathbb{R}^{2}$ any set $W=I\times J$ where $I$ and $J$ are closed (and possibly infinite) intervals of $\mathbb{R}$ with non-emtpy interior,  and note $\corners(W )$ its corners, which number is between $0$ and $4$. Then call \emph{polyrectangle} a finite union 
 $
W=\cup _{i}W_{i}
$ where each $W_{i}$ is an admissible rectangle, and for $i\neq j, \corners(W _{i})\cap \corners(W _{j})=\emptyset $. Call $\W$ the class of admissible polyrectangles. For such $W\in \W$, denote by $\text{\rm{corners}}(W)$ the elements of $\cup _{i}\text{\rm{corners}}(W)$ that lie on $\partial W$.
For $x\in \partial W\setminus \corners(W)$, note $\n_{W}(x)$ the outward normal unit vector to $W$ in $x$. We also call \emph{edge of $W$} a maximal segment of $\partial W$, i.e. a segment $[x,y]\subseteq \partial W$ that is not strictly contained in another such segment of $\partial W$ (in this case, $x,y\in \corners(W)$). Also call $\Per_{i}(W),i=1,2,$ the total length of edges where the normal vector is collinear to $\u_{i}$, 	and remark that the total perimeter of $W$ is $\Per_{1}(W)+\Per_{2}(W)$.

Using the  considerations  of Section \ref{sec:image}, it is obvious that, for $W\in \W$, $\chi (W)$ is the cardinality of $\Phi ^{\text{\rm{out}}}(W)$, which is formed by north-east outwards corners of $ \corners(W)$, minus the cardinality of $\Phi ^{\text{\rm{in}}}(W)$, the set of south-west inwards corners of $  \corners(W)$. This remark can be used to compute the mean \EC~of random sets living in $\W$.

\paragraph{Shot noise processes}
Let $\mu $ be a probability measure on $\W$, and $\nu $ a probability measure on $\mathbb{R}_{+}$ such that $\nu (\{0\})=0$.
Let $X$ be a Poisson measure on $\mathbb{R}^{2}\times \W\times \mathbb{R}_{+}$ with intensity measure $\ell\otimes\mu \otimes\nu $, where $\ell$ is Lebesgue measure on $\mathbb{R}^{2}$.  Introduce the random field
\begin{align*}
f(y)=\sum_{(x,W,m)\in X}m\mathbf{1}_{\{y\in x+W\}},y\in \mathbb{R}^{2}.
\end{align*} 
To make sure that the process is well defined a.s., assume that $$ \int_{\mathbb{R}_{+}\times \W}m\Vol(W)\nu (dm)\mu (dW)<\infty .$$ Then the law of $f$ is stationary, i.e. invariant under spatial translations.

This type of process is called a shot noise process, or moving average, and is used in image analysis, geostatistics, or many other fields. The geometry of their level sets are also the subject of a heavy literature, see for instance the recent works \cite{AST,BieDes}, but no expression seems to exist yet for the mean \EC. We provide below such a formula under some weak technical assumptions, see Section \ref{sec:proof-SN} for a proof.

\begin{theorem}
\label{thm:SN}
Let $\lambda >0,V\in \W$. Let $W_{1},M_{1},M_{2}$, independent random variables with distribution $\mu ,\nu,\nu  $ respectively.  Introduce the level set $F=\{f\geqslant \lambda \}$
and assume that $\E \Phi ^{\text{\rm{out}}}(W_{1})<\infty ,\E\Phi ^{\text{\rm{in}}}(W_{1})<\infty ,\E\Per (W_{1})<\infty $. Introduce  
\begin{align*}
p_{1}&=\P(\lambda -M_{1}\leqslant f(0)<\lambda )\\
p_{2}&=\P(\lambda -M_{1}-M_{2}\leqslant f(0) <\lambda -\max(M_{1},M_{2}) )\\
p_{2}'&=\P( \lambda -\max(M_{1},M_{2})\leqslant f(0)<\lambda ) .
\end{align*}
Then 
\begin{align}
\notag
\E \chi (F\cap V)=&\Vol(V)\left[
 p_{1}\E \chi (W_{1})+\frac{p_{2}-p_{2'}}{2}\E\Per_{1}(W_{1})\E\Per_{2}(W_{1}) 
\right]+\chi (V)\P(f(0)\geqslant\lambda )\\
\label{eq:EC-rectang}&+\frac{p_{1}}{4}\left[
\Per_{1}(V)\E\Per_{2}(W_{1})+\Per_{2}(V)\E\Per_{1}(W_{1})
\right].
\end{align}

\end{theorem}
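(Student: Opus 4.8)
The plan is to exploit that, almost surely, the level set $F=\{f\geq\lambda\}$ is itself a locally finite polyrectangle: its boundary is contained in the union of the axis-parallel boundaries $\partial(x+W)$ of the grains, so $F\cap V$ is an admissible set whose boundary consists of horizontal and vertical segments meeting at finitely many corners in any bounded region. This lets me bypass the $\C^{1,1}$ hypothesis of Theorem~\ref{th:as-conv} (which $F$ does not satisfy, having genuine corners) and instead compute $\chi(F\cap V)$ directly from the anisotropic local formula of Lemma~\ref{lm:chi-eps-phi-0}, in the polyrectangle form recorded before the statement of Theorem~\ref{thm:SN}: $\chi(F\cap V)$ equals the number of north-east outward corners of $F\cap V$ minus the number of south-west inward corners. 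Equivalently I would work with the Gauss digitalisation and the identity $\chi^{\varepsilon}([F\cap V]^{\varepsilon})=\tilde\delta_{0}^{-\varepsilon\u_{1},-\varepsilon\u_{2}}-\tilde\delta^{0}_{\varepsilon\u_{1},\varepsilon\u_{2}}$, pass to expectations, and invoke the uniform component bound of Theorem~\ref{th:bound-pixel-comp} to justify, by dominated convergence, that $\E\chi(F\cap V)=\lim_{\varepsilon\to0}\E\chi^{\varepsilon}([F\cap V]^{\varepsilon})$.

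First I would classify the corners of $F\cap V$ into four disjoint families and argue they are the only ones possible: (i) corners of a single grain $x+W$ lying in the interior of $V$; (ii) transversal crossings of an edge of one grain with an edge of a distinct grain, in the interior of $V$; (iii) corners of $V$ lying in the interior of $F$; (iv) transversal crossings of a grain edge with an edge of $V$. An essential point, to be checked for each family, is that no X-configuration arises in the limit, so that the anisotropic corner formula is applicable: at a double grain crossing the four ``quadrant'' field values $f(0),\,f(0)+M_{1},\,f(0)+M_{2},\,f(0)+M_{1}+M_{2}$ are monotone in the two marks, which forbids the diagonal patterns.

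Next I would take expectations family by family, using the multivariate Slivnyak–Mecke formula for the Poisson process $X$ together with stationarity of $f$, which replaces the effect of ``all the other grains'' at a typical point by the law of $f(0)$. For family (i) a grain corner is a genuine outward (resp. inward) corner of $F$ exactly when adding the grain's mark pushes the field across $\lambda$, an event of probability $p_{1}=\P(\lambda-M_{1}\leq f(0)<\lambda)$; summing the net corner count $\E\chi(W_{1})$ over $V$ with unit spatial intensity gives $\Vol(V)\,p_{1}\E\chi(W_{1})$. For family (iii) a corner of $V$ survives in $F\cap V$ with probability $\P(f(0)\geq\lambda)$, and summing with the sign dictated by the remark $\chi(V)=\#\Phi^{\text{out}}(V)-\#\Phi^{\text{in}}(V)$ yields $\chi(V)\P(f(0)\geq\lambda)$. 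For families (ii) and (iv) I would compute the crossing intensities from the fact that orthogonal stationary fibre processes of length intensities $L_{1},L_{2}$ meet with point intensity $L_{1}L_{2}$: the vertical and horizontal grain edges carry intensities $\E\Per_{1}(W_{1})$ and $\E\Per_{2}(W_{1})$, while the window edges have fixed lengths $\Per_{1}(V),\Per_{2}(V)$. A finite case analysis over the four relative orientations of the two crossing edges — each orientation equally likely because oppositely oriented edges of a closed polyrectangle have equal total length — identifies, through the four quadrant values above, precisely which crossings are outward or inward corners; the surviving probabilities are the combinations defining $p_{2}$ and $p_{2}'$, and after assembling them with the orientation weights and the edge-crossing intensities one obtains the bulk term $\Vol(V)\tfrac{p_{2}-p_{2}'}{2}\E\Per_{1}(W_{1})\E\Per_{2}(W_{1})$ and the window term $\tfrac{p_{1}}{4}\left[\Per_{1}(V)\E\Per_{2}(W_{1})+\Per_{2}(V)\E\Per_{1}(W_{1})\right]$.

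The main obstacle is analytic rather than combinatorial, and this is where the moment hypotheses $\E\Phi^{\text{out}}(W_{1}),\,\E\Phi^{\text{in}}(W_{1}),\,\E\Per(W_{1})<\infty$ enter: I must justify interchanging the limit $\varepsilon\to0$, the expectation, and the spatial integration. This needs an integrable dominating function for $\varepsilon^{-2}$ times the bicovariogram increments, supplied by the component bound of Theorem~\ref{th:bound-pixel-comp} together with the finiteness of the expected corner and perimeter functionals, and a careful treatment of the boundary layer of width $O(\varepsilon)$ along $\partial V$, where families (iii) and (iv) live and where interior and boundary contributions must be separated without double counting. A secondary technical point is to verify that almost surely $F\cap V$ has finitely many components and that no grain edge is tangent to, or passes through a corner of, $V$ or of another grain, so that every crossing is transversal and the case analysis is exhaustive; these degenerate events have probability zero by the absolute continuity of the intensity measure.
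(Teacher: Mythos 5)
Your proposal follows essentially the same route as the paper: almost surely $F\cap V$ is a polyrectangle, so $\chi(F\cap V)=\#\Phi^{\text{out}}(F\cap V)-\#\Phi^{\text{in}}(F\cap V)$ exactly, and one classifies the candidate corners into the same four families (grain corners, grain--grain edge crossings, corners of $V$ in $F$, grain--window edge crossings), evaluates the inclusion probabilities $p_{1},p_{2},p_{2}'$ from the local quadrant values of $f$, and integrates with the Mecke formula and stationarity. The only divergence is that your alternative detour through the Gauss digitalisation, the $\varepsilon\to 0$ limit and Theorem \ref{th:bound-pixel-comp} is unnecessary here --- the corner formula is exact for polyrectangles, so the paper needs only Fubini and the finiteness of $\E\Phi^{\text{out}}(F\cap V)$ and $\E\Phi^{\text{in}}(F\cap V)$, with no dominated-convergence argument.
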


\begin{itemize}
\item For instance, if $\mu $ is a Dirac mass in $[0,a]^{2},a>0,$ and $\nu $ is a Dirac mass in $1$, $M_{i}=1$ a.s. and $f(0)$ is a Poisson variable with parameter $a^{2}$. The volumic part of the mean  \EC~(i.e. the one multiplied by $\Vol(V)$) is therefore 
\begin{align*}
\P(f(0)=\lceil \lambda \rceil-1)\left(
1+\frac{1}{2}\left[
\lceil \lambda \rceil -1-a^{2}
\right]
\right)\end{align*}
\item For $\lambda \in (0,1)$ and $\nu $ the Dirac mass in $1$, $F$ has the law of the boolean model $M$ with random grain distributed as the $M_{i}$ and random germs given by $X$, therefore formula \eqref{eq:EC-rectang} provides, with $p_{1}=p_{2}'=\P(f(0)=0)=\exp(-\E\Vol(W_{1})),p_{2}=0$,
\begin{align*}
\E \chi (M\cap V)=&\Vol(V)p_{1}\left[
\E \chi _{1}(W_{1})- \frac{\E\Per_{1}(W_{1})\E\Per_{2}(W_{1})}{2}
\right]\\
&+\chi (V)(1-p_{1})+\frac{p_{1}}{4}\left[
\Per_{1}(V)\E\Per_{2}(W_{1})+\Per_{2}(V)\E\Per_{1}(W_{1})
\right].
\end{align*}
\end{itemize}

Coming back to the \EC~of smooth sets of $\mathbb{R}^{2}$, the following assumption needs to be in order for the restriction of a $\rho$-regular set to a polyrectangle to be topologically well behaved.

\begin{assumption}
\label{ass:inter-W}
Let $F$ be a $\rho$-regular set, and $W\in \W$. Assume that  $\partial F\cap \corners(W)=\emptyset $ and that for $x\in \partial F\cap \partial W$, $\n_{F}(x)$ is not colinear with $\n_{W}(x)$.
\end{assumption}

 If a $\rho$-regular set $F$ and a polyrectangle $W$ do not satisfy this assumption, $F\cap W$ might have an infinity of connected components, which makes the \EC~not properly defined.
  Let us prove that the digitalisation is consistent if this assumption is in order.

\begin{theorem}
\label{th:as-conv}
Let $F$ be a  $\rho$-regular set of $\mathbb{R}^{2}$, $W\in \W$ satisfying Assumption \ref{ass:inter-W}  such that $F\cap W$ is bounded. Then $F\cap W\in \A$ and  there is $\varepsilon (F,W)>0$ such that for $\varepsilon <\varepsilon(F,W) $, 
\begin{align}
\label{eq:euler-pixelized}
 \notag\chi (F\cap W)&=\chi^{\varepsilon } ([F\cap  W ] ^{\varepsilon })\\ 
 &=\sum_{x\in \varepsilon \mathbb{Z} ^{2}}\chi ^{\varepsilon }(x;F\cap W)\\
 \notag&=\varepsilon ^{-2}\int_{ \mathbb{R}^{2}}\chi ^{\varepsilon }(x;F\cap W)dx\\
\label{eq:euler-bicovariograms}
 &=\varepsilon ^{-2} \left( \delta_0 ^{-\varepsilon \u_{1},-\varepsilon \u_{2}}(F\cap W)-\delta_{\varepsilon \u_{1},\varepsilon \u_{2}}^{0}(F\cap W) \right).
\end{align} Also, $\varepsilon (F,W)=\varepsilon (F+x,W+x)$  for $x\in \mathbb{R}^{2}$.

 \end{theorem}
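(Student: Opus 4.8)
The plan is to establish the four equalities from left to right, proving admissibility along the way, and to isolate the genuinely topological content — that the Gauss reconstruction does not alter the Euler characteristic — as the main obstacle. The closing equality is a pure computation, and the passage from the grid sum to the integral is obtained by averaging over all grid offsets, which is also what forces the threshold to be translation invariant.

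\emph{Admissibility.} I would first argue $F\cap W\in\A$. Under Assumption \ref{ass:inter-W}, $\partial F$ meets $\partial W$ only on the open edges, transversally, and avoids $\corners(W)$. Since $\partial F$ is a compact $\mathcal{C}^1$ curve (Theorem \ref{th:blaschke}(i)) and $\partial W$ is a finite union of segments, transversality forces $\partial F\cap\partial W$ to be finite. Hence $\partial(F\cap W)\subseteq(\partial F\cap W)\cup(F\cap\partial W)$ is a finite union of $\mathcal{C}^1$ arcs and segments meeting at finitely many points, so $F\cap W$ and its complement have finitely many components, and $\chi(F\cap W)$ is well defined.

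\emph{No $X$-configurations, uniformly in the offset.} Fix an offset $t\in\mathbb{R}^2$ and the translated grid $Z_\varepsilon+t$. I claim that for $\varepsilon<\varepsilon_0(F,W)$, with $\varepsilon_0$ independent of $t$, the digitisation of $F\cap W$ lies in $\mathcal{A}(Z_\varepsilon+t)$. The driving observation is that the corners $a,b,c,d$ of any patch satisfy $a+d=b+c$, so every affine $\ell$ gives $\ell(a)+\ell(d)=\ell(b)+\ell(c)$; thus no half-plane, hence no set locally agreeing with a half-plane on a patch, can realise an alternating pattern $(1,0,0,1)$ or $(0,1,1,0)$. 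By the rolling balls of Proposition \ref{prop:poperties-rollingball}, $F$ and $F^c$ each contain a ball of radius $\rho$ through every one of their points, so neither is thinner than $2\rho$ and (reach $\geq r<\rho$) within any ball of radius $\ll\rho$ the boundary $\partial F$ is a single graph over its tangent, deviating from it by $O(\varepsilon^2/\rho)$. Consequently, for small $\varepsilon$, every patch is entirely inside $F$, entirely outside, or straddles one nearly-flat arc — a half-plane up to negligible error. Along the edges of $W$ the boundary is a genuine half-plane; at a transversal crossing of $\partial F$ and $\partial W$ the set is locally one sector of two crossing lines, one axis-aligned and the other with tangent not parallel to it (since $\n_F$ is not collinear with $\n_W$), and a short case analysis on the corner coordinates rules out both alternating patterns with a margin $\Theta(\varepsilon)$ surviving the $O(\varepsilon^2/\rho)$ perturbation; corners of $W$ are avoided by $\partial F$, so there $F\cap W$ equals $W$ or $\emptyset$ locally and axis-aligned corners are alternation-free. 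Compactness of $\partial(F\cap W)$ and finiteness of the crossings and corners yield one threshold $\varepsilon_0(F,W)$ valid for all $t$.

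\emph{Topological consistency (the main obstacle).} For each offset, Properties \ref{ppt:gauss} then give $\chi^\varepsilon([F\cap W])=\chi((F\cap W)^\varepsilon)$, so the crux is $\chi((F\cap W)^\varepsilon)=\chi(F\cap W)$: the reconstruction must preserve the number of components of the set and of its complement. This is where I expect the real difficulty. The reconstruction is Hausdorff-$O(\varepsilon)$ close to $F\cap W$ and is squeezed, $(F\cap W)^{\ominus c\varepsilon}\subseteq(F\cap W)^\varepsilon\subseteq(F\cap W)^{\oplus c\varepsilon}$ with $c=\tfrac{\sqrt2}{2}$. For $F$ alone, Theorem \ref{th:blaschke}(iii) and the rolling-ball deformation retraction make erosions and dilations by $<\rho$ homotopy equivalent to $F$, supplying the component correspondence away from $\partial W$. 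The technical work is localising this near the finitely many crossings and corners of $W$, where $F\cap W$ fails to be $\rho$-regular but, by transversality, is locally a contractible wedge; one checks directly that below the (offset-independent) threshold no component is created, destroyed, split, or merged. Combining the interior correspondence with this finite local analysis gives $\chi((F\cap W)^\varepsilon)=\chi(F\cap W)$ for every $t$.

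\emph{Grid sum to integral, and the closing identity.} By Lemma \ref{lm:chi-eps-phi-0}, for each offset $\chi^\varepsilon([F\cap W])=\sum_{x\in Z_\varepsilon+t}\chi^\varepsilon(x;F\cap W)$, which by the previous steps equals $\chi(F\cap W)$ for all $t$ and all $\varepsilon<\varepsilon_0$. Averaging over $t$ in a fundamental domain and using that $\{Z_\varepsilon+t:t\in[0,\varepsilon)^2\}$ partitions $\mathbb{R}^2$,
\[
\varepsilon^{2}\chi(F\cap W)=\int_{[0,\varepsilon)^2}\Big(\sum_{x\in Z_\varepsilon+t}\chi^\varepsilon(x;F\cap W)\Big)dt=\int_{\mathbb{R}^2}\chi^\varepsilon(x;F\cap W)\,dx,
\]
which is the third equality. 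For the last one I would integrate the defining indicators: by Fubini, $\int_{\mathbb{R}^2}\Phi^\varepsilon_{\text{out}}(x;A)\,dx=\delta_0^{-\varepsilon\u_1,-\varepsilon\u_2}(A)$ and $\int_{\mathbb{R}^2}\Phi^\varepsilon_{\text{in}}(x;A)\,dx=\delta_{\varepsilon\u_1,\varepsilon\u_2}^{0}(A)$ for any measurable $A$ of finite volume, whence $\varepsilon^{-2}\int\chi^\varepsilon(x;F\cap W)\,dx$ is the claimed difference of polyvariograms. Finally, $\varepsilon(F,W)=\varepsilon(F+x,W+x)$ is immediate: every quantity entering the threshold (the reach $\rho$, the transversality angles, the finite geometric data) depends only on $(F,W)$ up to a common translation, and the offset-uniformity of Step 2 means the fixed grid plays no distinguished role.
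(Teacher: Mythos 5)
Your outer structure matches the paper's: admissibility via transversality, exclusion of $X$-configurations for small $\varepsilon$, the identity $\chi^\varepsilon([F\cap W])=\chi((F\cap W)^\varepsilon)$ from Properties \ref{ppt:gauss}, and the passage from the grid sum to the integral by averaging over grid offsets followed by Fubini to produce the bicovariograms --- this last computation is exactly the paper's, and your observation that the threshold must be offset-uniform (equivalently, translation-invariant) for the averaging to work is the right one. Those parts are sound.

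The genuine gap is the step you yourself flag as the main obstacle: $\chi((F\cap W)^\varepsilon)=\chi(F\cap W)$. The two ingredients you offer do not suffice. First, the sandwich $(F\cap W)^{\ominus c\varepsilon}\subseteq(F\cap W)^\varepsilon\subseteq(F\cap W)^{\oplus c\varepsilon}$ together with the homotopy equivalence of $F$ with its small erosions and dilations does not pin down the topology of the reconstruction: a set squeezed between an erosion and a dilation of a disc can perfectly well have extra components or holes (take the erosion union a small detached square lodged in the annular gap). Ruling this out requires using the specific structure of the Gauss digitisation; the paper does this via Lemma \ref{lm:homeomorphic} (the digitisation of an $r$-dilation of a connected set is grid-connected, because consecutive balls of radius $r\geqslant\sqrt{2}\,\varepsilon$ overlap in a region large enough to contain a grid point), applied to $F=(F^{\ominus r})^{\oplus r}$ and to $F^{c}$ after proving that $F^{\ominus r}$ and $(F^{c})^{\ominus r}$ are connected, and then a Jordan--Schoenflies argument to upgrade the component count to a homeomorphism that is the identity away from $(\partial F)^{\oplus 2\varepsilon}$, so that the connected components can be composed one at a time. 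None of this is present in your sketch. Second, near the crossings of $\partial F$ with $\partial W$ and near $\corners(W)\cap F$ the set $F\cap W$ is not regular at any scale, and ``one checks directly that no component is created, destroyed, split, or merged'' is precisely the assertion to be proved, not a routine verification: the paper needs the smoothing $F'=((F\cap W)^{\ominus r'})^{\oplus r'}$, the identification of where $F'$ and $F\cap W$ coincide, and the local surgery Lemma \ref{lm:local-modif} (exploiting that, by transversality, $F\cap W$ is star-shaped in the direction $-\u_{1}$ inside a small ball around each crossing) applied both to the continuous sets and to their digitisations. Without an argument of this kind the central equality, and hence the whole chain \eqref{eq:euler-pixelized}--\eqref{eq:euler-bicovariograms}, remains unproved.
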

 
 The proof is at Section \ref{sec:proof-main}.
 
  \begin{remark}\begin{itemize}
\item [(i)] The apparent anisotropy of \eqref{eq:euler-pixelized}-\eqref{eq:euler-bicovariograms} can be removed by averaging over all pairs $\{\u_{1},\u_{2}\}$ of orthogonal unit vectors of $\mathbb{R}^{2}$.  Even though \eqref{eq:euler-bicovariograms} does not involve the discrete approximation, a direct proof not exploiting lattice approximation is not available yet, and such a proof might shed light on the  nature of the relation between covariograms and Minkowski functionals.
\item[(ii)] The fact that the \EC~of a regular set digitalisation converges to the right value is already known, see \cite[Section 6]{Sva15} and references therein, but we reprove it in Lemma 
 \ref{lm:reg-set-approx}, under a slightly stronger form.  One of the  difficulties of the proof of Theorem 
\ref{th:as-conv} is to deal with the intersection points of $\partial W$ and $\partial F$.
\item[(iii)]It is proved in Svane \cite{Sva15} that in higher dimensions, \EC~and Minkowski functionals of order $d-2$ can be approximated through isotropic analogues of formula \eqref{eq:discrete-EC}. The arguments of the proof of  Lemma 
 \ref{lm:reg-set-approx}, treating the case $F\subseteq W$, are purely metric and should be generalisable  to higher dimensions. On the other hand, dealing with boundary effects in higher dimensions might be a headache.
 \item[(iv)] It is clear throughout the proof that the value $\varepsilon (F,W)$ above for which \eqref{eq:euler-pixelized}-\eqref{eq:euler-bicovariograms} is valid is a continuous function of $\rho $, the distances between the connected components $F\cap W$,  the distances between the points of $(\partial W\cap \partial F)\cup (\corners(W)\cap F)$, and the angles between $\n_{W}(x)$ and $\n_{F}(x)$ at points $x\in \partial F\cap \partial W$.
\end{itemize}

  \end{remark}
  \begin{example} 
\label{ex:disc}
  Before giving the proof, let us give an elementary graphical illustration of \eqref{eq:euler-bicovariograms} with $F=B(0,1)$ in $\mathbb{R}^{2}$. Let $\varepsilon >0$. We note $\Gamma _{+}= F\cap (F+\varepsilon \u_{1})^{c}\cap (F+\varepsilon \u_{2})^{c}$ and $\Gamma  _{-}= F^{c}\cap (F+\varepsilon \u_{1})\cap (F+\varepsilon \u_{2})$. We should have for $\varepsilon $ small
\begin{align*}
1=\chi (F)=\varepsilon ^{-2}
 \left[
\delta_0 ^{-\varepsilon \u_{1},-\varepsilon \u_{2}}(F)-\delta^{0}_{\varepsilon \u_{1},\varepsilon \u_{2}}(F)\right],
\end{align*}$\Vol(\Gamma _{-})=\delta_{\varepsilon \u_{1},\varepsilon \u_{2}}^{0}(F)$, and $\Vol(\Gamma _{+})=\delta_0 ^{\varepsilon \u_{1},\varepsilon \u_{2}}(F)=\delta_0 ^{-\varepsilon \u_{1},-\varepsilon \u_{2}}(F)$. The notation $a,b,c,d,e,f$ designate  six distinct subsets  (see Figure \ref{fig:ball}, below) such that $\Gamma _{-}=a\cup b\cup c, \Gamma _{+}=d\cup e\cup f$. Symmetry arguments  yield that $\Vol(a)=\Vol(f), \Vol(a\cup b)=\Vol(c),\Vol(f)=\Vol(d\cup e)$,  whence 
\begin{align*}
\Vol(\Gamma _{+})-\Vol(\Gamma _{-})
&=\Vol(a)+\Vol(b)+\Vol(c)-\Vol(d)-\Vol(e)-\Vol(f)\\
&=2(\Vol(f)+\Vol(b))-\Vol(d)-\Vol(e)-\Vol(f)\\
&=\Vol(f)-\Vol(d)+2\Vol(b)-\Vol(e)\\
&=2\Vol(b).
\end{align*}The shape of $b$ is very close to that of a cube with diagonal length $\varepsilon $, i.e. with side length $2^{-1/2}\varepsilon $.
 Therefore $\Vol(b)\approx \varepsilon ^{2}/2$, which confirms $1=\chi (F)= \varepsilon ^{-2}\left[ \delta_0 ^{-\varepsilon \u_{1},-\varepsilon \u_{2}}(F)-\delta_{\varepsilon \u_{1},\varepsilon \u_{2}}^{0}(F) \right]$ (rigorously proved by Theorem \ref{th:as-conv}).
  
  \begin{figure} 
 
  \begin{tikzpicture}
    \node[anchor=south west,inner sep=0] at (0,0) {\includegraphics[scale=.5]{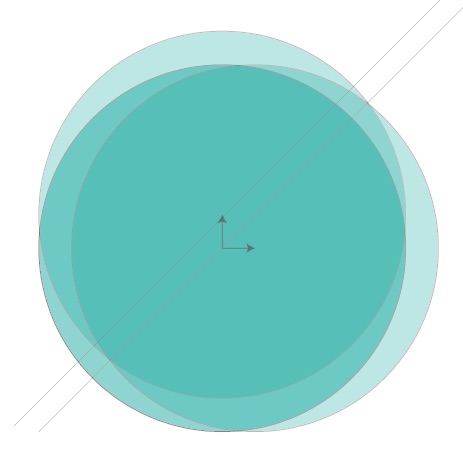}};
    \node at (4,3.5) {$0$};
        \node at (4.7,3.5) {$\varepsilon \u_{1}$};
            \node at (4,4.6) {$\varepsilon \u_{2}$};
            \node at (1.35,2.1) {$a$};
            \node at (1.65,1.7) {$b$}; 
            \node at (2.1,1.3) {$c$}; 
         
          \node at (5.85,6.5) {$d$};
            \node at (6.2,6.2) {$e$}; 
            \node at (6.5,6) {$f$};
          
\end{tikzpicture}
\caption{ \label{fig:ball}Bicovariograms of the unit disc}
  
\end{figure}

         
          


  
  
  \end{example}
   \begin{remark}\begin{itemize}
\item [(i)]
 Theorem \ref{th:as-conv} still holds if $\partial F$ intersects the outwards corner of $W$. In particular, we can drop the corner-related part of Assumption \ref{ass:inter-W} if $W$ is a rectangle. This subtlety makes the proof slightly more complicated, and such generality is not necessary in this paper.
 \item [(ii)]It should be possible to show that under the conditions of Theorem \ref{th:as-conv}, $F\cap W$ and $(F\cap W)^{\varepsilon }$ are homeomorphic, but we are only interested in the \EC~in this paper.
\end{itemize}
 \end{remark}
 
 \begin{remark}
 \label{rk:C1manifold-notlipshitz}It seems difficult to deal with $\mathcal{C}^{1}$ manifolds that don't have a Lipschitz boundary, in a general setting. Consider for instance in $\mathbb{R}^{2}$ 
\begin{align*}
A=\bigcup _{n=2}^{\infty }B((1/n,0),1/n^{2}).
\end{align*} Then $\partial A$ is a $\mathcal{C}^{\infty }$ embedded sub manifold of $\mathbb{R}^{2}$, but it has infinitely many connected components, which puts $A$  off the class $\mathcal{A}$ of sets that we consider admissible for computing the \EC.\end{remark}
 
     To have the convergence of  \EC's expectation for random regular sets, we need the domination provided by Theorem \ref{th:bound-pixel-comp} in the next section.

     \subsection{Bounding the number of  components}
     
     Taking the expectation in formula \eqref{eq:euler-pixelized} and switching with the limit  $\varepsilon \to 0$ requires a uniform upper bound in $\varepsilon $ on the right hand side. 
 For $\varepsilon $ small, \eqref{eq:euler-pixelized} consists of a lot of positive and negative terms that cancel out. Since grouping them manually  is quite intricate, this formula is not suitable for obtaining a general upper bound on $ | \chi (F^{\varepsilon })| $.  The most efficient way consists in bounding the number of components of $F^{\varepsilon }$ and $(F^{c })^{\varepsilon }$ in terms of the regularity of the set.

The result derived below is intended to be applied to $\rho$-regular sets, but we cannot make any assumption on the value of the regularity radius $\rho $, because the bound must be valid for every realisation. We therefore give an upper bound on $\#\Gamma (F^{\varepsilon })$ and $\#\Gamma ((F^{\varepsilon })^{c }) $ valid for any measurable set $F$.

The formula obtained bounds the number of connected components, which is a global quantity, in terms of occurrences of local configurations of the set, that we call \emph{entanglement points}.  Roughly, an entanglement occurs if two points of $F^{c}$ are  close but separated by a tight portion of $F$, see Figure \ref{fig:entang}. This  might create  disconnected components of $F^{\varepsilon }$ in this region although $F$ is locally connected.

To formalise this notion, let $x,y\in  Z_\varepsilon  $ grid neighbours. Introduce $\p_{x,y}\subseteq \mathbb{R}^{2}$ the closed  square with side length $\varepsilon $ such that   $x$ and $y$ are the midpoints of two opposite sides. Denote $\p_{x,y}'=\partial \p_{x,y}\setminus \{x,y\}$, which has two connected components. Then $\{x,y\}$ is an \emph{entanglement pair of points} of $F$ if $x,y\notin F$ and   $(\p'_{x,y}\cup F)\cap \p_{x,y}$ is connected.  
We call $\N_{\varepsilon }(F)$ the family of such pairs of points.

\begin{figure} 
\centering
\caption{\label{fig:entang}\emph{Entanglement point} In this example, $\{x,y\}\in \N_{\varepsilon }(F)$ because the two connected components of $\p'_{x,y}$, in lighter grey, are connected through $\gamma \subseteq (F\cap \p_{x,y})$. We don't have $\{x,y\}\in  \N_{\varepsilon }(F')$.}
\includegraphics[scale=.2]{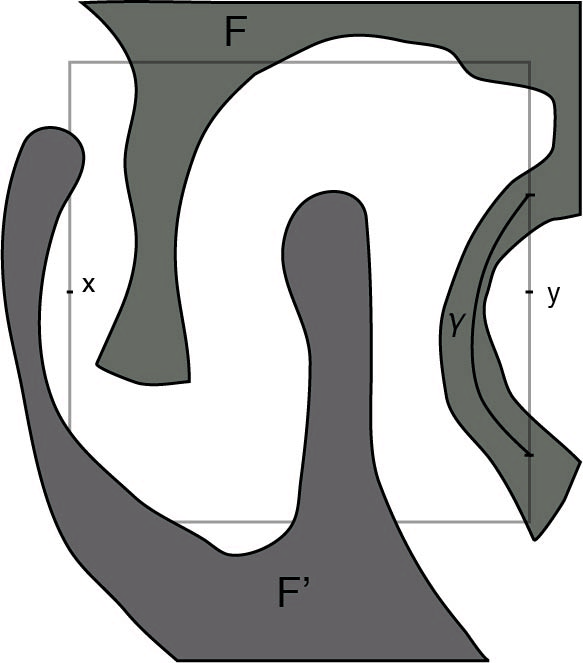}
\end{figure}

 For the boundary version, given $W\in \W,$ we also consider grid points $x,y \in   [ W\cap F]$, on the same line or column of $Z_{\varepsilon }$,  such that \begin{itemize}
\item $x,y$ are within distance $\varepsilon $ from  one of the  edges of $W$ (the same edge for $x$ and $y$)  
\item $\llparenthesis x,y\rrparenthesis \neq \emptyset $
\item $\llparenthesis x,y\rrparenthesis \subseteq [F^{c}\cap F^{\oplus \varepsilon }$].
\end{itemize}  The family of such pairs of points $\{x,y\}$ is noted $\N_{\varepsilon }'(F;W)$ .
   
  Even though $\N_{\varepsilon }(F)$ and $\N_{\varepsilon }'(F;W)$ are not points but pairs of points of $Z_{\varepsilon }$, for $A\subseteq  \mathbb{R}^{2}$, we extend the notation $\N_{\varepsilon }(F)\subseteq  A,(\text{resp. } \N_{\varepsilon }(F)\cap  A)$, to indicate  that the points of the pairs of $\N_{\varepsilon }(F)$ are contained in $A$  (resp. the collection of pairs of points from $\N_{\varepsilon }(F)$ where both points are contained in $A$), and idem for $\N_{\varepsilon }'(F,W)$. 
  
  For $\{x,y\}\in \N_{\varepsilon }(F),[x,y]\cap F\neq \emptyset $ and $x,y\in F^{c}$. Therefore $\N_{\varepsilon }(F)\subseteq \partial F^{\oplus \varepsilon }$.
    We have also  $\N'_{\varepsilon }(F,W)\subseteq  (\partial F^{\oplus \varepsilon }\cap  \partial W^{\oplus \varepsilon })$.

\begin{theorem}
\label{th:bound-pixel-comp}
Let $F$ be a bounded measurable set. Then
\begin{align}
\label{eq:bound-Gamma-eps}
 \# \Gamma (F^{\varepsilon })  \leqslant  2\#\N_{\varepsilon }(F ) + \#\Gamma (F ) 
\end{align}
and for any    $W\in \W$,
\begin{align}
\label{eq:bound-Gamma-eps-W}
  \#\Gamma ((F\cap W)^{\varepsilon }) &  \leqslant   2\#\N_{\varepsilon }(F)\cap  W^{\oplus \varepsilon }+2\#\N_{\varepsilon }'(F,W) + \#\Gamma (F\cap W)+2\#\corners(W ) .
  \end{align}

\end{theorem}

  The proof is deferred to Section \ref{sec:proof-entang}.

\begin{remark}
\label{rk:bound-euler}
Properties \ref{ppt:gauss} and \eqref{eq:bound-Gamma-eps-W} entail
\begin{align*}
\#\Gamma (((F\cap W)^{\varepsilon })^{c})&=\#\Gamma (((F\cap W)^{c})^{\varepsilon })=\#\Gamma (((F^{c}\cap W)\cup W^{c})^{\varepsilon })\\
&=\#\Gamma ((F^{c}\cap W)^{\varepsilon }\cup (W^{c})^{\varepsilon })\leqslant  \#\Gamma ((F^{c}\cap W)^{\varepsilon })+\#\Gamma ((W^{c})^{\varepsilon })
\end{align*}because adding a connected set $B$ to a given set $A$ can only decrease its number of bounded connected components, or increase it by $1$ if $B$ is bounded. It is easy to see that $\#\Gamma ((W^{c})^{\varepsilon })\leqslant \#\corners(W )$ for $\varepsilon $ sufficiently small.
It follows that
 \begin{align}
 \label{bound:EC-Ne}
\notag | \chi ((F\cap W)^{\varepsilon }) | \leqslant & \max( \# \Gamma ((F\cap W)^{\varepsilon }), \#\Gamma ((F\cap W)^{\varepsilon  })^{c})\\
 \leqslant &   3\#\corners(W )+2\max(\#\N_{\varepsilon }(F)\cap W^{\oplus \varepsilon },\#\N_{\varepsilon }(F^{c})\cap{W^{\oplus \varepsilon }})\\
\notag&+2\max(\#\N_{\varepsilon }'(F,W),\#\N_{\varepsilon }'(F^{c},W))+\max(\#\Gamma (F^{c}\cap W),\#\Gamma (F\cap W)).
\end{align}
\end{remark}

 \begin{remark} 
The boundary of a $\rho $-regular set $A$ is a $\mathcal{C}^{1}$ manifold, and can therefore be written under the form  $\partial A=f^{-1}(\{0\})$, and $\text{cl}(A)=\{f\leqslant  0\}$ for some $\mathcal{C}^{1}$ function $f$ such that $\nabla f\neq 0$ on $\partial A$ and $\|\nabla f\|^{-1}\nabla f$ is $\rho ^{-1}$-Lipschitz on $\partial A$. 
Such a function is said to be of class $\mathcal{C}^{1,1}$, see \cite{HSN}. One can bound the right hand members of \eqref{eq:bound-Gamma-eps}-\eqref{eq:bound-Gamma-eps-W} by quantities depending solely on $f$. For instance, it is proved in the companion paper \cite{LacEC2}  that in the context of Gaussian fields, $\E$ and $\lim_{\varepsilon }$ can be switched in \eqref{eq:euler-pixelized} if the derivatives of $f$ are Lipschitz and their Lipschitz constants have a finite moment of order $4+\eta $ for some $\eta >0$. 
\end{remark}

 {

  \section{Random  sets}
\label{sec:random sets}

\newcommand{\F}{\mathcal{F}}
\renewcommand{\R}{\mathcal{R}}

Let $(\Omega ,\mathcal{A},\P)$ be a complete probability space. Call $\F$ the class of closed sets of $\mathbb{R}^{2}$, endowed with the $\sigma $-algebra $\mathcal{B}$ generated by events $ {\{G\cap F  \neq \emptyset, F\in \F  \}}$, for $G$ open. A $\mathcal{B}$-measurable mapping $A:\Omega \to \F $ is called a Random Closed Set (RACS). See \cite{Mol05} for more on RACS, and equivalent definitions. The functional $\chi $ is not properly defined, and therefore not measurable, on $\F$. We introduce the subclass $\R$ of regular closed sets as defined in Definition \ref{def:regular}, and endow $\R$ with the trace   topology and Borel $\sigma $-algebra, a random regular set being a RACS a.s. in $\R$. Taking the limit in $\varepsilon \to 0$ in formula \eqref{eq:euler-pixelized} entails that $\chi $ is measurable $\R\to\mathbb{R}$ (the functionals $F\mapsto \#\Gamma (F)$ and $F\mapsto \#\Gamma (F^{c})$ are also measurable). If a random regular set $F$ satisfies a.s. Assumption \ref{ass:inter-W} with some $W\in \W$, then $\#\Gamma (F\cap W),\#\Gamma ((F\cap W)^{c})$ and $\chi (F\cap W)$ are also measurable quantities.

Introduce the support $\text{\rm{ supp}}(A)$ of a RACS $A$ as the smallest closed set $K$ such that $ \P(A\subseteq K )=1$. Mostly for simplification purpose, we will assume whenever relevant that $\text{\rm{supp}}(A)$ is bounded. \\

 It is easy to derive a result giving the mean \EC~as the limit of the right hand side expectation in \eqref{eq:euler-pixelized}  by combining Theorems \ref{th:as-conv} and \ref{th:bound-pixel-comp}. We treat below the example of stationary random sets, i.e. which law is invariant under the action of the translation group. A non-trivial stationary RACS $F$ is a.s. unbounded, therefore we must consider the restriction of $F$ to a bounded window $W$.
 The main issue is to handle boundary terms stemming from the intersection. They involve the perimeter of $W$ and the specific perimeter of $F$.
We introduce the square perimeter $\Per_{\infty }$ of a measurable set $A$ with finite Lebesgue measure by the following. Note $\mathcal{C}^{1} _{c}$ the class of compactly supported functions of class $\mathcal{C} ^{1}$ on $\mathbb{R}^{2}$, and define $
\Per_{\infty }(A)=\Per_{\u_{1}}(A)+\Per_{\u_{2}}(A)$, where 
\begin{align*}
\Per_{\u}(A)=\sup_{\varphi \in \mathcal{C}^{1}_{c}: | \varphi (x) | \leqslant  1}\int_{A}\langle \nabla \varphi (x),\u   \rangle dx=\Per_{-\u}(A),\:\u\in \S^{1},
\end{align*}so that we also have the expression
\begin{align*}
\Per_{\infty }(A)=\sup_{\varphi \in \C^{1}_{c}:\|\varphi (x)\|_{\infty }\leqslant 1}\int_{A}\text{div}(\varphi )(x)dx.
\end{align*}
The classical variational perimeter is defined by
\begin{align*}
\Per(A)=\frac{1}{4}\int_{\S^{1}}\Per_{\u}(A)\sigma (d\u),
\end{align*} with the renormalized Haar measure $\sigma $ on the unit circle, it satisfies $\Per(A)\leqslant \Per_{\infty }(A)\leqslant\sqrt{2} \Per(A)$.
We have for instance for the square $W=[0,a]^{2}$, 
$
\Per_{\infty }(W)=4a
$
and for a ball $B$ with unit diameter in $\mathbb{R}^{2}$, $\Per_{\infty }(B)=4$.  
 
{
 
It is proved in \cite[(1)]{Galerne} that for any  bounded measurable set $A$,
\begin{align*}
\Per_{\u}(A)&=2 \lim_{\varepsilon \to 0}\varepsilon ^{-1}  \delta_0  ^{\varepsilon \u}(A)=2 \lim_{\varepsilon \to 0}\varepsilon ^{-1}  \delta_0  ^{-\varepsilon \u}(A)
\end{align*}
and in \cite[Proposition 16-(8)]{Galerne} that for any  RACS $F$ with compact support  
\begin{align}
\notag\E\Per_{\infty }(F)&
=2\sum_{i=1}^{2}\lim_{\varepsilon \to 0}\varepsilon ^{-1}\E \delta_0  ^{\varepsilon \u_{i}}(A)
=2\sum_{i=1}^{2}\lim_{\varepsilon \to 0}\varepsilon ^{-1}\E \delta_0  ^{-\varepsilon \u_{i}}(A)\\
\label{eq:Fubini-Per-infinity}&=2\sum_{i=1}^{2}\lim_{\varepsilon \to 0}\varepsilon ^{-1}\int_{\mathbb{R}^{2}}\P(x\in A,x+\varepsilon \u_{i}\notin A)dx.
\end{align} 
An important feature of this formula is that the mean perimeter can be deduced from the second order marginal distribution $(x,y)\mapsto \P(x,y\in F)$ in a neighbourhood of the diagonal $\{(x,x);x\in \mathbb{R}^{2}\}$. The formulas above provide a strong connection between the perimeter, called first-order Minkowski functional in the realm of convex geometry, the covariogram, and the second order marginal of a random set.

It is difficult not to notice the analogy featured by the material contained in this paper. The results in the present section emphasise the connection  between the \EC, Minkowski functional of order $0$, and the bicovariogram, a functional that can be expressed in function of the third order marginal of a random set, in a neighbourhood of the diagonal $\{(x,x,x);x\in \mathbb{R}^{2}\}$.

The results below have been designed to provide an application  in the context of random functions excursions, a field which has been been the subject of intense research recently, see the references in the introduction. We show in the companion paper \cite{LacEC2} that the quantities in \eqref{eq:hyp-expect-finite} can be bounded by finite quantities under some regularity assumptions on the underlying field, and give explicit mean \EC~for some stationary Gaussian fields.

 Say that a closed set $F$    is  locally regular  if for any compact set $W$, there is a $\rho$-regular  set $F'$ such that  $F\cap W=F'\cap W$.  
 \begin{proposition}
\label{prop:stationary-set}
 Let $F$ be a stationary random closed set, a.s. locally regular, and $
 W\in \W$  bounded. Assume that   the following local expectations are finite: 
\begin{align}
\label{eq:hyp-expect-finite}
\E \sup_{0\leqslant \varepsilon \leqslant 1 }\#\N_{\varepsilon }(F)\cap W,\;&\E \sup_{0\leqslant \varepsilon \leqslant 1 }\#\N_{\varepsilon }(F^{c})\cap W,\E\sup_{0\leqslant \varepsilon \leqslant 1 }\N_{\varepsilon }'(F,W),\;\E\sup_{0\leqslant \varepsilon \leqslant 1 }\N_{\varepsilon }'(F^{c},W),\\
\notag&\E \#\Gamma (F\cap W),\;\E\#\Gamma (F^{c}\cap W).
\end{align}  
  Then $ \E | \Gamma (F\cap W) |<\infty, \;\E | \Gamma ((F\cap W)^{c}) | <\infty   $, and the following limits are finite  
  \begin{align*}
\overline{\chi }(F):& = \lim_{\varepsilon \to 0}\varepsilon ^{-2}\left[ \P(0\in F,\varepsilon \u_{1}\notin F,\varepsilon \u_{2}\notin F)-\P(0\notin F,-\varepsilon \u_{1}\in F,-\varepsilon \u_{2}\in F)  \right],\\  
\overline{\Per_{\u_{i}} }(F):&=2\lim_{\varepsilon \to 0}\varepsilon ^{-1}\P(0\in F,\varepsilon \u_{i}\notin F) , i=1,2,\\
\overline{\Vol}(F):&= \P(0\in F).
\end{align*}We also have, with $\overline{\Per_{\infty }}(F)=\sum_{i=1}^{2}\overline{\Per_{\u_{i}}}(F)$,
\begin{align}
\label{eq:stationary-expectation-EC}
\E \chi (F\cap W)&=\Vol(W)\overline{\chi (F)}+\frac{1}{4}\Big(\Per_{\u_{2}}(W)\overline{\Per_{\u_{1} }}(F)\\
\notag&\hspace{4cm}+\Per_{\u_{1}}(W)\overline{\Per_{\u_{2} }}(F)\Big)+\chi (W)\overline{\Vol}(F)\\
\label{eq:stationary-expectation-Per}\E \Per_{\infty } (F\cap W)&=\Vol(W)\overline{\Per_{\infty }} (F)+\Per(W)\overline{\Vol}(F) \\
\label{eq:stationary-expectation-Vol}
\E \Vol(F\cap W)&=\Vol(W)\overline{\Vol(F)}.
\end{align}

 \end{proposition}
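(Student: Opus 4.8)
The plan is to integrate the local identity of Theorem~\ref{th:as-conv} over the plane, pass to expectations by Fubini, exchange $\E$ and $\lim_{\varepsilon\to0}$ by dominated convergence, and finally read off the three densities by localising the integrand near $\partial W$. I first record that Assumption~\ref{ass:inter-W} holds almost surely: since $\partial F$ is Lebesgue-null a.s.\ and $F$ is stationary, $\P(0\in\partial F)=0$, so a.s.\ $\partial F$ misses the finitely many corners of $W$; and the event that $\partial F$ is tangent to a fixed axis-parallel edge at a point of $\partial W$ is a translation-invariant coincidence of probability zero. Hence Theorem~\ref{th:as-conv} applies a.s., giving $F\cap W\in\A$ and, writing $R_{\varepsilon}:=\varepsilon^{-2}\int_{\mathbb{R}^{2}}\chi^{\varepsilon}(x;F\cap W)\,dx$, the identity $R_{\varepsilon}=\chi(F\cap W)$ for $\varepsilon<\varepsilon(F,W)$. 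The finiteness of $\E\#\Gamma(F\cap W)$ is assumed, and that of $\E\#\Gamma((F\cap W)^{c})$ follows from $\#\Gamma((F\cap W)^{c})\leqslant\#\Gamma(F^{c}\cap W)+\#\corners(W)$ as in Remark~\ref{rk:bound-euler}.

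Next I would dominate and apply Fubini. Combining Theorem~\ref{th:bound-pixel-comp} with Remark~\ref{rk:bound-euler} and the hypotheses~\eqref{eq:hyp-expect-finite} furnishes an integrable random variable $D$ with $\sup_{0<\varepsilon\leqslant1}|R_{\varepsilon}|\leqslant D$; since $R_{\varepsilon}\to\chi(F\cap W)$ a.s., dominated convergence gives $\E\chi(F\cap W)=\lim_{\varepsilon\to0}\E R_{\varepsilon}$. For each fixed $\varepsilon$ the supports of $\Phi^{\varepsilon}_{\text{out}}(\cdot;F\cap W)$ and $\Phi^{\varepsilon}_{\text{in}}(\cdot;F\cap W)$ are disjoint and $\int_{\mathbb{R}^{2}}|\chi^{\varepsilon}(x;F\cap W)|\,dx\leqslant2\Vol(W)<\infty$, so Fubini yields
\begin{align*}
\E\chi(F\cap W)=\lim_{\varepsilon\to0}\varepsilon^{-2}\int_{\mathbb{R}^{2}}\E\chi^{\varepsilon}(x;F\cap W)\,dx.
\end{align*}

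The heart of the proof is the evaluation of this limit by localisation. The integrand vanishes outside an $\varepsilon$-neighbourhood of $W$, which I partition into the deep interior $W_{\mathrm{int}}$ (where the full out- and in-stencils lie in $W$), the $\varepsilon$-strips along the edges of $W$ with the corner squares removed, and the $\varepsilon\times\varepsilon$ squares at the corners; in each piece $\E\chi^{\varepsilon}(x;F\cap W)$ is computed by stationarity. On $W_{\mathrm{int}}$ one has $\chi^{\varepsilon}(x;F\cap W)=\chi^{\varepsilon}(x;F)$, contributing $\Vol(W)\,\overline{\chi}(F)$ in the limit. Along an edge with normal $\pm\u_{1}$ a short configuration check shows $\E\chi^{\varepsilon}(x;F\cap W)=\P(0\in F,\varepsilon\u_{2}\notin F)+O(\varepsilon^{2})=\tfrac{\varepsilon}{2}\overline{\Per_{\u_{2}}}(F)+o(\varepsilon)$ on the outgoing edges and is $O(\varepsilon^{2})$ on the incoming ones; as the outgoing edges carry total length $\tfrac12\Per_{\u_{1}}(W)$, these strips contribute $\tfrac14\Per_{\u_{1}}(W)\overline{\Per_{\u_{2}}}(F)$, and symmetrically $\tfrac14\Per_{\u_{2}}(W)\overline{\Per_{\u_{1}}}(F)$ from the $\pm\u_{2}$ edges. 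Near a corner the stencil only sees the local $W$-geometry: a north-east convex (outward) corner gives $\E\chi^{\varepsilon}=\P(x\in F)+O(\varepsilon^{2})$, hence $+\overline{\Vol}(F)$; a south-west reflex (inward) corner gives $-\P(\{-\varepsilon\u_{1},-\varepsilon\u_{2}\}\subseteq F)\to-\overline{\Vol}(F)$; every other corner type contributes $O(\varepsilon)$. Using $\chi(W)=\#\{\text{NE-out}\}-\#\{\text{SW-in}\}$ the corners assemble into $\chi(W)\overline{\Vol}(F)$, and collecting the three parts yields~\eqref{eq:stationary-expectation-EC}. Since the edge and corner limits exist, the bulk limit $\overline{\chi}(F)=\lim_{\varepsilon\to0}\varepsilon^{-2}\E\chi^{\varepsilon}(0;F)$ exists and is finite.

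Formula~\eqref{eq:stationary-expectation-Vol} is immediate from Fubini, $\E\Vol(F\cap W)=\int_{W}\P(x\in F)\,dx=\Vol(W)\overline{\Vol}(F)$. For~\eqref{eq:stationary-expectation-Per} I would run the same localisation on the first-order variogram using Galerne's identities $\Per_{\u_{i}}(A)=2\lim_{\varepsilon\to0}\varepsilon^{-1}\delta_{0}^{\varepsilon\u_{i}}(A)$ and~\eqref{eq:Fubini-Per-infinity}: the bulk gives $\Vol(W)\overline{\Per_{\u_{i}}}(F)$ and the $\pm\u_{i}$-edges lying in $F$ give $\Per_{\u_{i}}(W)\overline{\Vol}(F)$, with no corner term since a length is insensitive to points; summing over $i$ and using $\Per_{\infty}(W)=\Per(W)$ for axis-parallel $W$ gives the claim. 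The finiteness of $\overline{\Vol}(F)=\P(0\in F)$ is trivial and that of $\overline{\Per_{\u_{i}}}(F)$ follows from Galerne's specific-perimeter theory applied to $F\cap W'$ for a slightly larger window $W'$, so all three densities are finite. The step I expect to be the main obstacle is the corner bookkeeping in the third paragraph: one must enumerate all convex and reflex corner types, check that only the north-east outward and south-west inward ones survive the $\varepsilon^{-2}$ scaling, and confirm that the interior/edge/corner partition produces no double counting beyond negligible $O(\varepsilon)$ overlaps, so that the combinatorial identity for $\chi(W)$ matches the analytic corner term exactly.
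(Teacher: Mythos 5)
Your overall architecture matches the paper's: verify Assumption \ref{ass:inter-W} almost surely, dominate via Theorem \ref{th:bound-pixel-comp} and Remark \ref{rk:bound-euler} so as to exchange $\E$ and $\lim_{\varepsilon}$, then split the lattice sum into bulk, edge and corner contributions evaluated by stationarity. The genuine gap is in the step where you separate these contributions: to extract $\Vol(W)\overline{\chi}(F)$ from the bulk you must already know that the edge limits, i.e.\ the quantities $\overline{\Per_{\u_{i}}}(F)$, exist and are finite, and your justification --- ``Galerne's specific-perimeter theory applied to $F\cap W'$'' --- does not deliver this. The identity \eqref{eq:Fubini-Per-infinity} is an equality of possibly infinite quantities; to conclude that $\lim_{\varepsilon\to 0}2\varepsilon^{-1}\P(0\in F,\varepsilon\u_{i}\notin F)$ is finite you would need $\E\Per_{\infty}(F\cap W')<\infty$, which is not among the hypotheses \eqref{eq:hyp-expect-finite} and does not follow from the entanglement and component-count bounds (a single locally regular component with very small regularity radius can have arbitrarily large perimeter while contributing one unit to $\#\Gamma$). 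Without this, your decomposition only shows that the \emph{sum} of the bulk and edge terms converges, so the existence of $\overline{\chi}(F)$ remains unproved. The paper closes this circle differently: it writes the limiting identity for three simply connected windows $W,W_{1},W_{2}$ of equal volume with $\Per_{\u_{i}}(W)=\Per_{\u_{i}}(W_{i})\neq\Per_{\u_{i}}(W_{i'})$ and subtracts, so that the volume and Euler terms cancel and the quantity $2\varepsilon^{-1}\P(0\in F,\varepsilon\u_{i}\notin F)$ is isolated as a convergent (hence finite) expression; only then is the corner error $\alpha_{\varepsilon}=\P(-\varepsilon\u_{1}\in F,-\varepsilon\u_{2}\in F)-\P(0\in F)$ shown to vanish and the convergence of the bulk term deduced. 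This window-comparison argument is the missing idea.

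A secondary weakness: your verification that a.s.\ $\n_{F}(x)$ is never collinear with $\n_{W}(x)$ at points of $\partial F\cap\partial W$ is asserted (``a translation-invariant coincidence of probability zero'') rather than proved; translation invariance alone does not force such an event to be null. The paper makes this rigorous by covering $\partial F$ with $\mathcal{C}^{1}$ graphs, invoking Sard's theorem to show that the set of ordinates at which $\partial F$ admits a horizontal tangent is Lebesgue-null, and then combining Fubini with stationarity to conclude that the fixed ordinate of any horizontal edge of $W$ is a.s.\ not such an ordinate (and symmetrically for vertical edges). You should supply this, or an equivalent argument, before applying Theorem \ref{th:as-conv} almost surely.
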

 
{
\begin{proof}
Let us first compute the mean volume and perimeter.
A straightforward application of Fubini's theorem gives \eqref{eq:stationary-expectation-Vol}. Assume for now that $\overline{\Per_{\u_{i}}}(F)$ exists, for $i=1,2$ (proved later).
We have by \eqref{eq:Fubini-Per-infinity}
\begin{align*}
\E\Per_{\infty }&(F\cap W)=2\sum_{i=1}^{2}\lim_{\varepsilon \to 0}\varepsilon ^{-1}\int_{\mathbb{R}^{2}}\P(x\in F\cap W,x+\varepsilon \u_{i}\notin F\cap W)dx\\
&=2\sum_{i=1}^{2}\lim_{\varepsilon \to 0}\varepsilon ^{-1}
 \left[
  \int_{x\in W:x+\varepsilon \u_{i}\in W}\hspace{-1cm}\P(x\in F,x+\varepsilon \u_{i}\notin F)dx
+
  \int_{x\in W:x+\varepsilon \u_{i}\notin W}\hspace{-1cm}\P(x\in F)dx
 \right]\\
 &=2\sum_{i=1}^{2}\lim_{\varepsilon \to 0}
 \left[
  (\Vol(W)+o(1))\varepsilon ^{-1}\P(0\in F,\varepsilon \u_{i}\notin F)dx+
\P(0\in F)\varepsilon ^{-1}\delta_0 ^{-\varepsilon \u_{i}}(W)
 \right]\\
 &=2\sum_{i=1}^{2}\left(\Vol(W)\frac{1}{2}\overline{\Per_{\u_{i}}}(F)+\overline{\Vol}(F)\frac{1}{2}\Per_{\u_{i}}(W)\right),
\end{align*}which gives \eqref{eq:stationary-expectation-Per}.

 Theorem \ref{th:as-conv} yields that \eqref{eq:euler-pixelized} holds if a.s. $\n_{F}(x)\notin\n_{W}(x),x\in \partial F\cap \partial W$, and $\partial F$ almost never touches a corner of $W$. 
 Let us prove that it is so.
Call 
\begin{align*}
\Theta =\{x_{[2]}\in \mathbb{R}:\n_{F}(x_{[1]},x_{[2]})=\pm\u_{2}\text{ for some }x_{[1]}\in \mathbb{R}\text{\rm{  such that }}(x_{[1]},x_{[2]})\in \partial F\}.
\end{align*} We use below Sard's Theorem to prove that $\Theta $ is a.s. negligible. 

Since $\partial F$ is locally a $\mathcal{C}^{1}$ manifold, there is a countable family of  real bounded intervals $I_{k}$ and open sets $(\Omega _{k})$  covering $\partial F$ such that on each $\Omega _{k}$, $\partial F$ can  be represented via the implicit function theorem by the graph of a $\mathcal{C}^{1}$ function $g_{k}:I_{k}\subseteq \mathbb{R}\to \mathbb{R}$. 
Let $K$ be the countable family of $k$ such that  $\partial F\cap \Omega _{k}=\{(t,g_{k}(t));\;t\in I_{k}\}\cap \Omega _{k}$. In particular, $\n_{F}(\cdot )$ is not collinear to $\u_{1}$ on $\Omega _{k},k\in K$.  

Sard's theorem entails that for $k\in K$, the set of critical values of $g_{k}$ is negligible, whence
\begin{align*}
\Theta _{k}:=\{y:\;y=g_{k}(t)\text{ for some }t\in I_{k}\text{ such that }g_{k}'(t)=0\},
\end{align*}has $0$ Lebesgue measure. For $k\notin K$, $\n_{F}(\cdot )$ is not collinear to $\u_{2}$ on $\Omega _{k}$, otherwise the IFT could not be applied, whence $\Theta \cap \Omega _{k}=\emptyset $. Therefore $\Theta \subseteq \cup _{k\in K}\Theta _{k}$ has also a.s. $0$ Lebesgue measure. Fubini's theorem then yields, noting $\ell$ the $1$-dimensional Lebesgue measure,
\begin{align*}
0=\E\ell(\Theta )=\int_{\mathbb{R}}\P(\exists x_{[1]}\in \mathbb{R}:(x_{[1]},x_{[2]})\in \partial F,\n_{F}(x_{[1]},x_{[2]})=\pm\u_{2})dx_{[2]},
\end{align*} whence by stationarity $\P(\exists x_{[1]}\in \mathbb{R}:(x_{[1]},x_{[2]})\in \partial F,\n_{F}(x_{[1]},x_{[2]})=\pm\u_{2})=0$ for all $x_{[2]}\in \mathbb{R}$. Since the set of $x_{[2]}\in \mathbb{R}$ such that, for some $x_{[1]}\in \mathbb{R}$, $(x_{[1]},x_{[2]})\in \partial W$ and $\u_{2}\in\n_{W}(x_{[1]},x_{[2]})$ is finite (it corresponds to the second coordinates of horizontal edges of $W$), we have that a.s. $\n_{F}(x)\neq \pm\u_{2}$ for every $x\in \partial W\cap \partial F$ such that $\pm\u_{2}\in\n_{W}(x)$. With an exact similar reasoning, the same statement with $\u_{1}$ instead of $\u_{2}$ holds.  We therefore proved that a.s., for $x\in \partial F\cap \partial W,\n_{F}(x)$ is not colinear with $\n_{W}(x)$, as is required in Assumption \ref{ass:inter-W}.
 
 Since $\partial F$ is a.s. a $(d-1)-$dimensional manifold, it has a.s. vanishing $2$-dimensional Lebesgue measure, whence by Fubini's Theorem, the probability that one of the   corners of $W$ belongs to $\partial F$   is $0$. It follows that Assumption \ref{ass:inter-W} is satisfied a.s., whence the a.s. convergence of Theorem \ref{th:as-conv} holds. Lebesgue's Theorem then yields, thanks to the domination \eqref{eq:hyp-expect-finite},
  using also \eqref{eq:euler-pixelized},
\begin{align*}
\E \chi &(F\cap W) =\lim_{\varepsilon \to 0}\sum_{x\in \varepsilon \mathbb{Z} ^{2}}\E\chi ^{\varepsilon }(x;F\cap W)\\
&=\lim_{\varepsilon \to 0}\Bigg[ \sum_{x\in  [W]:x+\varepsilon \u_{1}\in W,x+\varepsilon \u_{2}\in W }\large[ \P(x\in F,x+\varepsilon \u_1\notin F,x+\varepsilon \u_2\notin F)\\
&\hspace{5.5cm}-\hspace{-1cm} \sum_{x\in  [W]:x-\varepsilon \u_{1}\in W,x-\varepsilon \u_{2}\in W }\hspace{-1cm}\P(x\notin F,x-\varepsilon \u_1\in F,x-\varepsilon \u_2\in F) \large]\\
&\hspace{2.5cm} +\hspace{-1cm}\sum_{x\in  [W]:x+\varepsilon \u_{1}\in W,x+\varepsilon \u_{2}\notin W }
\hspace{-1cm}
\P(x\in F,x+\varepsilon \u_1\notin F) 
+
\hspace{-1cm}\sum_{x\in [W]:x+\varepsilon \u_{1}\notin W,x+\varepsilon \u_{2}\in W}
\hspace{-1.5cm}
\P(x\in F,x+\varepsilon \u_2\notin F)\\
&\hspace{2cm}+\hspace{-1cm}\sum_{x\in [W]:\Phi ^{\varepsilon }_{\text{out}}(x,W)=1}\P(x\in F)-\sum_{x\in Z_{\varepsilon }:\Phi ^{\varepsilon }_{\text{in}}(x,W)=1}\P(x-\varepsilon \u_{1}\in F;x-\varepsilon \u_{2}\in F)
 \Bigg]\\
&=\lim_{\varepsilon \to 0}\Big[ \varepsilon ^{-2}(\Vol(W)+o(1))\Big[ \P(0\in F,\varepsilon \u_1\notin F,\varepsilon \u_2\notin F)\\
&\hspace{7cm}-\P(0\notin F,-\varepsilon \u_1\in F,-\varepsilon \u_2\in F) \Big]\\
&\hspace{2cm}+\varepsilon ^{-1}\left(\frac{\Per_{\u_{2}}(W)}{2}+o(1)\right)\P(0\in F,\varepsilon \u_1\notin F)\\
&\hspace{2cm}+\varepsilon ^{-1}\left(\frac{\Per_{\u_{1}}(W)}{2}+o(1)\right)\P(0\in F,\varepsilon \u_2\notin F) \\
&\hspace{1.3cm}+(\Phi ^{\varepsilon }_{\text{out}}(W)+o(1))\P(0\in F)
-(\Phi ^{\varepsilon }_{\text{in}}(W)+o(1))(\P(-\varepsilon \u_{1}\in F,-\varepsilon \u_{2}\in F))\Big].
\end{align*}
Define
\begin{align*}
 \overline{\chi }^{\varepsilon }(F)&= \varepsilon ^{-2}\left[ \P(0\in F,\varepsilon \u_1\notin F,\varepsilon \u_2\notin F)-\P(0\notin F,-\varepsilon \u_1\in F,-\varepsilon \u_2\in F)  \right]\\
\overline{ \Per}^{\varepsilon }_{\u_{i}}(F)&=2\varepsilon ^{-1}\P(0\in F,\varepsilon \u_{i}\notin F)\\
\alpha _{\varepsilon }&=\P(-\varepsilon \u_{1}\in F,-\varepsilon \u_{2}\in F)-\P(0\in F).
\end{align*}
Then the previous expression becomes, using { $\chi (W)=\Phi _{\text{out}}^{\varepsilon }(W)-\Phi _{\text{in}}^{\varepsilon }(W)$} for $\varepsilon $ small enough  (easily deduced from \eqref{eq:discrete-EC}),
\begin{align*}
\E \chi (F\cap W)=\lim_{\varepsilon \to 0}\Big[ \Vol(W)\overline{\chi }^{\varepsilon }(F)+\frac{1}{4} \left(
\Per_{\u_{1}}(W)\overline{\Per}^{\varepsilon }_{\u_{2}}(F)+
\Per_{\u_{2}}(W)\overline{\Per}^{\varepsilon }_{\u_{1}}(F)
\right)\\
+\chi (W)\overline{\Vol}(F)-\alpha _{\varepsilon }\Phi _{\text{in}}(W) \Big].
\end{align*}

Let $W,W_{1},W_{2}\in \W$ simply connected such that $\Vol(W)=\Vol(W_{1})=\Vol(W_{2}),\Per_{\u_{i}}(W)=\Per_{\u_{i}}(W_{i})\neq \Per_{\u_{i}}(W_{i'}), i=1,2$. Subtracting the expression above for $W$ and $W_{i}$ for each $i=1,2$ yields that $\overline{\Per_{\u_{i} }}^{\varepsilon }(F)$ has a limit, as announced.
Then
\begin{align*}
\alpha _{\varepsilon }&\leqslant \left| \P(0\in F,-\varepsilon \u_{1}\in F,-\varepsilon \u_{2}\in F)-\P(0\in F)+\P(0\notin F,-\varepsilon \u_{1}\in F,-\varepsilon \u_{2}\in F) \right| \\
&\leqslant \P(0\in F,-\varepsilon \u_{1}\notin F\text{ or }-\varepsilon \u_{2}\notin F)+\max(\P(0\notin F,-\varepsilon \u_{1}\in F),\P(0\notin F,-\varepsilon \u_{2}\in F))\\
&\leqslant \P(0\in F,-\varepsilon \u_{1}\notin F)+\P(0\in F,-\varepsilon \u_{2}\notin F)+\P(0\notin F,-\varepsilon \u_{1}\in F)+\P(0\notin F,-\varepsilon \u_{2}\in F).
\end{align*}
Since $F$ is stationary, $\P(0\in F,\varepsilon \u_{i}\notin F)=\P(0\in F,-\varepsilon \u_{i}\notin F)=\P(0\notin F,\varepsilon \u_{i}\in F)$. Therefore,
\begin{align*}
\P(0\notin F,\varepsilon \u_{1}\in F)+\P(0\notin F,\varepsilon \u_{2}\in F)\leqslant  \varepsilon\lim_{\varepsilon \to 0}\varepsilon ^{-1}(\overline{\Per_{\u_{1} }}^{\varepsilon }+\overline{\Per_{\u_{2} }}^{\varepsilon }(F))
\end{align*}
and $\alpha _{\varepsilon }\to 0$. 
It then follows that $\chi ^{\varepsilon }(F)$ has a finite limit, which concludes the proof. 
\end{proof}
}

 \section{Proofs}
 
 \subsection{Proof of Theorem \ref{thm:SN}}
  \label{sec:proof-SN}

Fubini's theorem and $\E\Phi ^{\text{\rm{out}}}(W_{1})<\infty ,\E \Phi ^{\text{\rm{in}}}(W_{1})<\infty $ entail that $F\cap V\in \W$ a.s.. Using $\E\Vol(W_{1})<\infty $ also yields that $\Phi ^{\text{\rm{out}}}(F\cap V),\Phi ^{\text{\rm{in}}}(F\cap V)$  have finite expectation. Therefore, $\E\chi (F\cap V)=\E\Phi ^{\text{\rm{out}}}(F\cap V)-\E\Phi ^{\text{\rm{in}}}(F\cap V).$ 
  Let us enumerate the (random) points of $F\cap V$ that can belong either to $\Phi ^{\text{\rm{out}}}(F\cap V)$ or to $\Phi ^{\text{\rm{in}}}(F\cap V)$. \begin{itemize}
\item A point $y$ of 
$\Phi ^{\text{\rm{out}}}(V)$ (resp. $\Phi ^{\text{\rm{in}}}(V)$) indeed belongs to $\Phi ^{\text{\rm{out}}}(V\cap F)$ (resp. $\Phi ^{\text{\rm{in}}}(F\cap V)$) iff $f(y)\geqslant\lambda $.
\item Given $(x,W,m)\in X$, a point  $y\in \Phi ^{\text{\rm{out}}}(x+W)\cap V$ (resp. $\Phi ^{\text{\rm{in}}}(x+W)\cap V$) is in $\Phi ^{\text{\rm{out}}}(F\cap V)$ (resp. $\Phi ^{\text{\rm{in}}}(F\cap V)$) iff $f(y)\geqslant\lambda $ and points in the neighbourhood of $y$ that are not in $W+x $ are not in $F$ either, i.e. $f(y)-m< \lambda .$
\item  A point $y\in \partial F\cap \partial V$ is in $\Phi ^{\text{\rm{out}}}(F\cap V)$ if $f(y)\geqslant\lambda ,f(y)-m<\lambda $, where $m$ is the unique mass such that $y\in \partial (W+x)$ for some $(x,W,m)\in X$, and the boundaries of $(W+x)$ and $V$ indeed form a North-East outwards angle in $y$.  
\item The last possibility is a point $y\in \partial (W+x)\cap \partial (W'+x')$ for two distinct triples $(x,W,m),(x',W',m')\in X$. In this case, $y\in \Phi ^{\text{\rm{out}}}(F\cap V)$ if $f(y)\geqslant\lambda ,f(y)-\min(m,m')<\lambda $, and $y\in \Phi ^{\text{\rm{in}}}(F\cap V)$ if $\lambda > f(y)-m-m';\lambda \leqslant f(y)-\max(m,m')$.
\end{itemize} 
 
We have, using the stationarity of $f,$
\begin{align*}
\E\#\Phi ^{\text{\rm{out}}}(V)\cap F=\P(f(0)\geqslant\lambda )\#\Phi^ \text{\rm{out}}(V),\hspace{1cm}\E\#\Phi ^{\text{\rm{in}}}(V)\cap F=\P(f(0)\geqslant\lambda )\#\Phi ^{\text{\rm{in}}}(V).
\end{align*}
Then,   using Mecke's formula,
\begin{align*}
 \E\sum_{(x,W,m)\in X}&\sum_{y\in \Phi ^{\text{out}}(W+x)\cap V}\mathbf{1}_{\{f(y)-m<\lambda \leqslant f(y) \}}\\
&=\int_{\mathbb{R}^{2}\times \W\times \mathbb{R}_{+}}\E\sum_{y\in \Phi ^{\text{\rm{out}}}(W+x)\cap V}\mathbf{1}_{\{f(y)<\lambda \leqslant f(y)+m\}}dx\nu (dm)\mu (dW)\\
&=\underbrace{\P(f(0)<\lambda \leqslant f(0)+M_{1})}_{p_{1}}\E\sum_{y\in \Phi ^{\text{\rm{out}}}(W_{1})}\int_{\mathbb{R}^{2} }\mathbf{1}_{\{y\in V-x\}} dx\mu (dW)\\ 
&=p_{1}\Vol(V)\E\#\Phi ^{\text{\rm{out}}}(W_{1}),\\
\end{align*}
and a similar computation holds to treat the points of the $\Phi ^{\text{\rm{in}}}(W+x)$.
For the next term, for $W\in \W$, note $\partial ^{N}W$ the edges of $W$ facing North, $\partial ^{E}W$ those facing East, and $\partial ^{+}W=\bigcup _{e\in \partial ^{N}W\cup \partial ^{E}W}e$. Then
\begin{align*}
\E\Phi ^{\text{\rm{out}}}&(V \cap F)\cap \partial V = 
 \E\sum_{(x,W,m)\in X}\sum_{y\in \partial ^{+}V\cap \partial ^{+}(W+x)}\mathbf{1}_{\{f(y)-m<\lambda \leqslant f(y) \}}\\
 &= \int_{\mathbb{R}^{2} \times \mathbb{R}_{+}}\E \sum_{y\in \partial ^{+}(W_{1}+x)\cap \partial^{+} V}\mathbf{1}_{\{f(y) <\lambda \leqslant f(y) +m\}}dx \nu (dm)\\
 &=p_{1}\E\Bigg[
\sum_{e\in \partial ^{N}W_{1},f\in \partial ^{E}V} \int_{  \mathbb{R}^{2}}\mathbf{1}_{\{e\cap (f-x)\neq \emptyset \}} dx\\
&\hspace{4cm}+\sum_{e\in \partial ^{E}W_{1},f\in \partial ^{N}V} \int_{  \mathbb{R}^{2}}\mathbf{1}_{\{e\cap (f-x)\neq \emptyset \}} dx\Bigg]\\
&=p_{1}\E\left[
\sum_{e\in \partial ^{N}W_{1},f\in \partial ^{E}V} | e |  | f | +\sum_{e\in \partial ^{E}W_{1},f\in \partial ^{N}V} | f |  | e | 
\right]\\
&=p_{1}\frac{1}{4}\left[\Per_{1}(V)
\E\Per_{2}(W_{1})+\Per_{2}(V)\E\Per_{1}(W_{1})
\right].
 \end{align*}
It remains to compute the term stemming from the intersection of distinct grains. The expected number of such points in $\Phi ^{\text{\rm{out}}}(F\cap V)$ is
\begin{align*}
 \E&\sum_{(x,W,m)\neq (x',W',m')\in X}\sum_{y\in \partial ^{+}(W+x)\cap \partial ^{+}(W'+x')\cap V}\mathbf{1}_{\{f(y)-\min(m,m')<\lambda \leqslant f(y) \}}\\
=& \E\int_{(\mathbb{R}^{2}  )^{2}}\sum_{y\in \partial ^{+}(W_{1}+x)\cap \partial ^{+}(W_{2}+x')\cap V}\underbrace{\P(f(y) +\max(M_{1},M_{2})<\lambda \leqslant f(y)+M_{1}+M_{2})}_{p_{2}}dxdx'\\
 =& p_{2}
 \underbrace{\E\int_{\mathbb{R}^{2} }\left[
\int_{\mathbb{R}^{2}}\#\Big(
\partial ^{+}(W_{1}+x)\cap \partial ^{+}(W_{2}+x')\cap V
\Big)dx'
\right]dx}_{I_{N}+I_{E}}
\end{align*}
where
\begin{align*}
I_{N}& = \E\int_{\mathbb{R}^{2}}\sum_{e\in \text{\rm{edges}}^{N}(W_{1}+x)}\sum_{f\in \text{\rm{edge}}^{E}(W_{2})}\int_{\mathbb{R}^{2}}\mathbf{1}_{\{e\cap V\cap (f+x')\neq \emptyset \}}dx'dx \\
& = \E
 \int_{\mathbb{R}^{2}}\sum_{e\in \text{\rm{edges}}^{N}(W_{1}+x),f\in \text{\rm{edge}}^{E}(W_{2})} {  | e\cap V |  |  f |  }dx\\
 &=\frac{\E\Per_{2}(W_{2})}{2}\E\sum_{e\in \text{\rm{edges}}^{N}(W_{1})}\int_{\mathbb{R}^{2}} | e\cap (V-x ) | dx= 
\frac{1}{4}\Vol(V)\E\Per_{2}(W_{1})\E\Per_{1}(W_{1}),
\end{align*}
and $I_{E}=I_{N}$ is computed similarly. With an analogue computation, the expected number of such points contributing to $\Phi ^{\text{\rm{in}}}(F\cap V)$ is $\frac{1}{2}p_{2}'\Vol(V)\E\Per_{1}(W_{1})\E\Per_{2}(W_{1})$.
 Finally, 
\begin{align*}
\E \chi (F\cap V) =&\Vol(V)(p_{1}\E\chi (W_{1})+\frac{p_{2}-p_{2}'}{2}\E\Per_{1}(W_{1})\E\Per_{2}(W_{2}) )\\
&+(\#\Phi ^{\text{\rm{out}}}(V)-\Phi ^{\text{\rm{in}}}(V))\P(f(0)\geqslant\lambda )\\
&+\frac{p_{1}}{4}\left[
\Per_{1}(V)\E\Per_{2}(W_{1})+\Per_{2}(V)\E\Per_{1}(W_{1})
\right].
\end{align*}

 \subsection{Proof of Theorem \ref{th:as-conv}}
 \label{sec:proof-main}
 
 Let $\rho >0$ such that $F$ satisfies (i),(ii) and (iii) in Theorem \ref{th:blaschke} (see Definition \ref{def:regular}).  The proof is divided in several steps, presented as Lemmas.
  
  \begin{lemma}$F\cap W\in \mathcal{A}$ \end{lemma}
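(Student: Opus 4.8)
The plan is to show that the topological boundary $\partial(F\cap W)$ is a finite union of $C^1$ arcs and straight segments, and then to read off from this that both $F\cap W$ and its complement have finitely many connected components. First I would record the regularity consequences: by Theorem \ref{th:blaschke}(i) and Definition \ref{def:regular}, $F$ is compact and $\partial F$ is a compact $1$-dimensional $C^1$ submanifold of $\mathbb{R}^2$; being a compact $C^1$ $1$-manifold it is a finite disjoint union of $C^1$ Jordan curves $C_1,\dots,C_n$. On the other side, $W\in\W$ has finitely many edges and finitely many corners, and $\corners(W)$ is avoided by $\partial F$ by Assumption \ref{ass:inter-W}.

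The crux is the second step: $\partial F\cap\partial W$ is a finite set. Any $x\in\partial F\cap\partial W$ lies on the relative interior of some edge $e$ of $W$ (corners being excluded), so $\n_W(x)=\pm\u_i$; by Assumption \ref{ass:inter-W}, $\n_F(x)$ is not collinear with $\n_W(x)$, which means precisely that the $C^1$ curve $\partial F$ is transverse to the straight edge $e$ at $x$. A transverse intersection of a $C^1$ curve with a line is isolated, and since $F\cap W$ is bounded all such points lie in a bounded region of each (compact) edge; hence there are finitely many intersection points on each of the finitely many edges, so $\partial F\cap\partial W$ is finite.

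From this I would deduce the structure of the boundary. Since $F$ and $W$ are closed, $\partial(F\cap W)\subseteq(\partial F\cap W)\cup(F\cap\partial W)$. The finitely many crossing points cut each Jordan curve $C_j$ into finitely many $C^1$ arcs, each of which, between consecutive crossings, avoids $\partial W$ and hence lies entirely in $\mathrm{int}(W)$ or entirely in $\mathbb{R}^2\setminus W$; thus $\partial F\cap W$ is a finite union of $C^1$ arcs (together with any whole curve $C_j\subseteq\mathrm{int}(W)$). Dually, the same finite set cuts each edge of $W$ into finitely many subsegments, each lying in $\mathrm{int}(F)$ or in $\mathbb{R}^2\setminus F$, so $F\cap\partial W$ is a finite union of segments. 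Altogether $K:=\partial(F\cap W)$ is a compact finite union of $C^1$ arcs and segments, i.e. a finite planar $1$-complex.

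Finally I would conclude by planar topology. A finite union of arcs $K$ has complement $\mathbb{R}^2\setminus K$ with finitely many connected components (each added arc changes the number of complementary components by at most one, the same principle invoked later in Remark \ref{rk:bound-euler}). Since $\mathbb{R}^2\setminus K=\mathrm{int}(F\cap W)\sqcup(F\cap W)^c$ splits into two disjoint open sets, the components of $(F\cap W)^c$ are among the finitely many components of $\mathbb{R}^2\setminus K$, giving $\#\Gamma((F\cap W)^c)<\infty$; likewise $\mathrm{int}(F\cap W)$ has finitely many components, and since $F\cap W=\mathrm{int}(F\cap W)\cup K$ with $K$ a finite complex, $F\cap W$ has finitely many components, all bounded because $F\cap W$ is bounded, so $\#\Gamma(F\cap W)<\infty$. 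Hence $F\cap W\in\mathcal{A}$. I expect the main obstacle to be the second step: without the non-collinearity in Assumption \ref{ass:inter-W}, $\partial F$ could oscillate tangentially against an edge of $W$ and accumulate infinitely many intersection points (and thus infinitely many small components), so the transversality argument, combined with compactness of $\partial F$ and boundedness of $F\cap W$, is exactly where the hypotheses must be spent.
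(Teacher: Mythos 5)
Your proof is correct, but it reaches the conclusion by a genuinely different route than the paper for the component-counting part. Both arguments spend Assumption \ref{ass:inter-W} on the same pivotal fact, the finiteness of $\partial F\cap\partial W$: the paper proves it by contradiction, extracting from an infinite family of intersection segments an accumulation point $z$ where continuity of $\n_F(\cdot)$ forces $\n_F(z)$ to be collinear with $\n_W(z)$; you prove it directly via transversality (non-collinear normals make each intersection point isolated, and an isolated subset of the compact set $\partial F$ is finite). These are essentially the same idea, and your local formulation is arguably cleaner. Where you diverge is in deducing finiteness of $\#\Gamma(F\cap W)$ and $\#\Gamma((F\cap W)^c)$: the paper uses the rolling-ball radius $\rho$ quantitatively --- every component of $F$ (resp.\ $F^c$) contains a ball of radius $\rho$, and only finitely many disjoint such balls fit in a bounded region --- while you show that $\partial(F\cap W)$ is a finite planar $1$-complex and invoke planar topology. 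Your route needs only the $\mathcal{C}^1$ structure and not the value of $\rho$, which is a nice economy; the paper's packing argument is more elementary and self-contained. One sub-step deserves a flag: the assertion that ``each added arc changes the number of complementary components by at most one'' is false for an arc meeting the previously accumulated set in many points (a single arc weaving across a segment at $n$ points can create $n-1$ new bounded faces), and the principle of Remark \ref{rk:bound-euler} concerns components of the union, not of the complement. The conclusion you need is nevertheless true and standard: in your configuration the arcs pairwise meet only at the finitely many crossing points, which are their endpoints, so an ordered induction (each new arc meets the existing compactum in at most two points) does work, or one can simply cite Euler's formula for finite planar graphs, or Alexander duality $\tilde H_0(S^2\setminus K)\cong \tilde H^1(K)$ for a finite $1$-complex $K$. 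With that justification repaired, the argument is complete.
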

\begin{proof}  Since every connected component of $F$ contains at least one ball of radius $\rho $ and one cannot pack an infinity of such balls in $F\cap W$, the number of connected components of $F$ contained in $W$ is finite. A similar reasoning holds for $F^{c}$, using Proposition \ref{prop:poperties-rollingball}.
  
   We must still prove that $F$ and $F^{c}$ have a finite number of connected components touching the boundary, i.e. prove that $\partial F\cap \partial W$ is finite. If it is not so, there is a countable infinite family of disjoint segments $I_{n}=[x_{n},y_{n}] $ of $F\cap \partial W$, with $x_{n},y_{n}\in \partial F$, and an infinite number of them lie on a given edge of $W$ because $W$ has a finite number of edges. Up to applying rotations and symmetries to $F$, assume  that $(x,y)$ is horizontal and  that the ordering is such that $x_{[1]}\leqslant y_{[1]}<x_{[2]}\leqslant y_{[2]}<\dots $. Then, since $\n_{F}(x_{n})_{[1]}\leqslant 0$ and $\n_{F}(y_{n})_{[1]}\geqslant 0$, there is a common limit $z\in [x,y]\cap \partial F$ of the $x_{n}$ and $y_{n}$ such that $\n_{F}(z)_{[1]}=0$, contradicting Assumption \ref{ass:inter-W}, using the continuity of $\n_{F}(\cdot )$. It follows that $F\cap W\in \A$.
   \end{proof}
   
   \begin{lemma} \label {lm:PhiX}For $\varepsilon $ sufficiently small, $\Phi _{X}^{\varepsilon }([F\cap W])=0$.\end{lemma}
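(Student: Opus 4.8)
We need to show that for small $\varepsilon$, the Gauss digitalisation $[F\cap W]^\varepsilon$ contains no $X$-configuration. Recall $\Phi_X^\varepsilon(x;A)=\mathbf{1}$ exactly when $x\in A$, $x+\varepsilon\u_1\notin A$, $x+\varepsilon\u_2\notin A$, but $x+\varepsilon(\u_1+\u_2)\in A$. So a vanishing $\Phi_X^\varepsilon$ means: whenever two diagonally-opposite corners of an elementary lattice square both land in $A=F\cap W$ while the other two land outside, we have a contradiction. Such a checkerboard pattern on a square of side $\varepsilon$ forces the boundary $\partial(F\cap W)$ to oscillate on an arbitrarily small scale, which the $\rho$-regularity of $F$ and the flatness of $\partial W$ should forbid.

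**The plan.** First I would set $A=F\cap W$ and suppose, for contradiction, that along some sequence $\varepsilon_n\to 0$ there are lattice points $x_n\in Z_{\varepsilon_n}$ realizing an $X$-configuration for $A$ (or symmetrically for $A^c$). The four corners $x_n, x_n+\varepsilon_n\u_1, x_n+\varepsilon_n\u_2, x_n+\varepsilon_n(\u_1+\u_2)$ then all lie within an $\varepsilon_n$-square, with $x_n,x_n+\varepsilon_n(\u_1+\u_2)\in A$ and the other two outside $A$. Passing to a subsequence, these four points converge to a common limit $z$, which must lie on $\partial A\subseteq \partial F\cup\partial W$. The strategy is to examine the three geometric possibilities for $z$ and rule each one out using the rolling-ball property of $F$ and Assumption~\ref{ass:inter-W}.

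**Case analysis at the limit point.** If $z\in\partial F\setminus\partial W$ (an interior point of $W$), then near $z$ the set $A$ coincides with $F$, whose boundary is a $\C^1$ curve with unit normal $\n_F(z)$; by the inside/outside rolling ball of radius $\rho$, the boundary is trapped between two tangent balls, so on scale $\varepsilon_n\ll\rho$ the set $F$ locally looks like a half-plane with normal $\n_F(z)$. A half-plane cannot produce the checkerboard sign pattern on a small axis-aligned square unless its boundary is nearly parallel to one of the diagonals — but even a diagonal half-plane fails the configuration, since a single line through a square leaves the two ``in'' corners adjacent, not opposite. Quantitatively I would use the rolling balls to bound the deviation of $\partial F$ from its tangent line on the square by $O(\varepsilon_n^2/\rho)$, which for small $\varepsilon_n$ is too small to separate diagonal corners oppositely; this gives the contradiction. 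If $z\in\partial W\setminus\partial F$ (so $z\in\mathrm{int}(F)$, on a flat edge of $W$ away from corners), then locally $A$ is the intersection of the full neighbourhood with a half-plane bounded by an axis-aligned edge of $W$; an axis-parallel straight edge again cannot realize the opposite-corner pattern. The remaining and most delicate case is $z\in\partial F\cap\partial W$, where both boundaries meet.

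**The main obstacle.** The hard part is precisely the boundary case $z\in\partial F\cap\partial W$. Here $A=F\cap W$ is locally the intersection of two half-plane-like regions: one bounded by the $\C^1$ curve $\partial F$ with normal $\n_F(z)$, the other by the axis-aligned edge of $W$ with normal $\n_W(z)$. Assumption~\ref{ass:inter-W} guarantees these two normals are not colinear, so $\partial F$ and $\partial W$ cross transversally at $z$; locally $A$ is a ``wedge'' (curvilinear sector) of opening angle bounded away from $0$ and $\pi$. I would show that such a transversal wedge, after rescaling by $\varepsilon_n$, converges to a genuine corner of two distinct lines, and a two-line corner cannot place exactly the two diagonal corners of a small square inside while excluding the other two — one checks this directly from the finitely many angular positions, using that the wedge angle is uniformly bounded and the edge of $W$ is axis-aligned. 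The uniformity (the same $\varepsilon(F,W)$ works) follows because the transversality angles at the finitely many points of $\partial F\cap\partial W$ are bounded below, as already noted in Remark (iv). The symmetric argument for $A^c=(F\cap W)^c$, whose local pictures are complements of the above, rules out $X$-configurations of the complement and completes the proof that $\Phi_X^\varepsilon([F\cap W])=0$ for $\varepsilon$ below a threshold depending only on $\rho$ and these geometric data.
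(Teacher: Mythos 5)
Your strategy is genuinely different from the paper's: the paper first proves $\Phi_X^{\varepsilon}([F])=0$ directly, by extracting from a hypothetical $X$-configuration four rolling balls of radius $\rho$ (two inside $F$ through the diagonal pair $x$, $x+\varepsilon(\u_1+\u_2)$, two with interior in $F^{c}$ through $x+\varepsilon\u_1$, $x+\varepsilon\u_2$) whose required disjointness is geometrically impossible once $\varepsilon<\sqrt2(\sqrt2-1)\rho$; it then disposes of the window entirely by noting that, for $\varepsilon$ small and away from $\corners(W)$, the conditions $x\in W$ and $x+\varepsilon(\u_1+\u_2)\in W$ already force the whole $\varepsilon$-square into $W$ (edges of $W$ being axis-parallel), so an $X$-configuration of $[F\cap W]$ is one of $[F]$. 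Your blow-up can be repaired to reach the same conclusion, but two steps would fail as written. First, you apply the tangent-line/rolling-ball band estimate at the subsequential limit $z$ of the squares $Q_n$; since one only knows $d(Q_n,z)\to 0$ and not $d(Q_n,z)=O(\varepsilon_n)$, the $O(\varepsilon_n^2/\rho)$ band around the tangent line at $z$ does not control $\partial F$ over $Q_n$. The estimate must be anchored at a boundary point $z_n\in\partial(F\cap W)\cap Q_n$ (one exists on the segment $[x_n,x_n+\varepsilon_n\u_1]$), which incidentally makes the compactness extraction unnecessary in the interior case.

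Second, and more seriously, your key claim in the transversal case --- that a two-line corner cannot contain the two diagonal corners of a small square while excluding the other two --- is false for a general wedge: $\{t\geq s/2\}\cap\{t\leq 2s\}$ contains $0$ and $\varepsilon(\u_1+\u_2)$ and excludes $\varepsilon\u_1$ and $\varepsilon\u_2$, for every $\varepsilon>0$, with the two lines perfectly transversal. What saves the statement is not the transversality of $\n_F$ and $\n_W$ but solely the fact that one of the two bounding lines is axis-parallel: if $H_1=\{t\geq t_0\}$, then $x,x+\varepsilon(\u_1+\u_2)\in H_1$ force $x+\varepsilon\u_1,x+\varepsilon\u_2\in H_1$, so both excluded corners must fail the other half-plane, contradicting the affine identity $f_2(x)+f_2(x+\varepsilon(\u_1+\u_2))=f_2(x+\varepsilon\u_1)+f_2(x+\varepsilon\u_2)$ --- this is exactly the paper's reduction of the boundary case to the interior case. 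Relatedly, ``rescale and pass to the limit'' does not conclude on its own, because the limiting configuration can be degenerate (all four corners on the bounding lines), so the argument must anyway be run quantitatively at scale $\varepsilon_n$. Finally, your second case omits $z\in\corners(W)\cap\mathrm{int}(F)$, which Assumption \ref{ass:inter-W} does not exclude (it only forbids $\partial F\cap\corners(W)\neq\emptyset$); the case is harmless, since a quadrant or a three-quarter plane admits no $X$-configuration, but it must be checked.
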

\begin{proof} 
  Let $\varepsilon <  \sqrt{2}(\sqrt{2}-1)\rho$. Elementary geometric considerations yield that we cannot find balls $B_{i},i=1,\dots ,4$ such that \begin{itemize}
\item $0\in B_{1},\varepsilon \u_{1}\in B_{2},\varepsilon \u_{2}\in B_{3},\varepsilon (\u_{1}+\u_{2})\in B_{4}$
\item Each ball has radius $\geqslant \rho $
\item $B_{1}\cap B_{2}=\emptyset ,B_{2}\cap B_{3}=\emptyset ,B_{3}\cap B_{4}=\emptyset ,B_{4}\cap B_{1}=\emptyset $.
\end{itemize}

The inside and outside  rolling ball conditions then yield that $\Phi _{X}^{\varepsilon }(0,[F])=0$. Reasoning similarly in every point  and summing yields $\Phi _{X}^{\varepsilon }([F])=0$. 
To prove that $\Phi _{X}^{\varepsilon }([F\cap W])=0$, let us first remark that due to Assumption \ref{ass:inter-W}, for $\varepsilon $ small enough, the points of $  \partial F\cap \partial W$ are at distance more than $2\varepsilon $ from $\corners(W)$. 
Therefore the intersection of $F$ with $W$ cannot add $X$-configurations,  $\Phi _{X}^{\varepsilon }([F\cap W])=0$.
\end{proof}

We now proceed to show that $F\cap W$ and $(F\cap W)^{\varepsilon }$ have the same \EC.  The proof is divided in two parts. We first prove {\bf (i)} the result under the assumption that $F\subseteq W$, avoiding boundary issues, and then {\bf (ii)} complete the proof without this assumption.

{\bf (i)} Let us first assume that $F\subseteq  W$, or equivalently that $F$ is bounded and $W=\mathbb{R}^{2}$.
The following result shows that,  if boundary problems are put aside, a regular set and its Gauss approximation are homeomorphic when $\varepsilon $ is small enough. Even though we could not locate the result under this particular form, the fact that the topology of a $\rho$-regular set is preserved by digital approximation is already known in image analysis, see \cite{Sva15} and references therein. We defer the proof to the Appendix.
 \begin{lemma}
 \label{lm:reg-set-approx}
Let $F$ be a  $\rho  $-regular bounded set, and 
let  $\varepsilon < \rho /\sqrt{2}$ be such that given any two distinct connected components $C,C'$ of $F$, $d(\partial C,\partial C')>4\varepsilon $.  Then $F$ is homeomorphic to $F^{\varepsilon }$.
\end{lemma}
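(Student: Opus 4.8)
The plan is to realise the homeomorphism $F\cong F^{\varepsilon}$ explicitly, as the restriction to $F$ of a global homeomorphism $h$ of $\mathbb{R}^{2}$ that carries $F$ onto $F^{\varepsilon}$ and is built in the tubular (normal) coordinates around $\partial F$ supplied by $\rho$-regularity. By Proposition \ref{prop:poperties-rollingball}, $\partial F$ has reach at least $r$ for every $r<\rho$ and carries a continuous, $\rho^{-1}$-Lipschitz unit normal field $\n_{F}$; consequently, for any $r<\rho$ the map $\Psi(y,t)=y+t\,\n_{F}(y)$ is a homeomorphism from $\partial F\times(-r,r)$ onto the open tube $\partial F^{\oplus r}$, with inverse $z\mapsto(\pi_{\partial F}(z),\sigma(z)\,d(z,\partial F))$, where $\pi_{\partial F}$ is the nearest-point projection onto $\partial F$ and $\sigma(z)=-1$ if $z\in F$, $+1$ otherwise. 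I will use the normal fibres $\Psi(\{y\}\times(-r,r))$ as the skeleton along which the deformation acts, keeping $h$ the identity outside the tube.

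First I would locate the digitised boundary inside a thin sub-tube. Every boundary edge of $F^{\varepsilon}$ separates a pixel whose centre lies in $F$ from an adjacent pixel (at distance $\varepsilon$) whose centre lies in $F^{c}$; since $\partial F$ must cross the segment joining the two centres, an elementary orthogonality estimate gives $\partial F^{\varepsilon}\subseteq\partial F^{\oplus\varepsilon/\sqrt2}$, and $\varepsilon/\sqrt2<\rho/2<\rho$ places $\partial F^{\varepsilon}$ well inside the tube. The crucial step is then a \emph{graph lemma}: in the coordinates $\Psi$, the set $\partial F^{\varepsilon}$ is the graph of a continuous function $g:\partial F\to(-\varepsilon/\sqrt2,\varepsilon/\sqrt2)$, meaning that each normal fibre meets $\partial F^{\varepsilon}$ in exactly one point and that $F^{\varepsilon}$ occupies precisely the inner part $\{t\le g(y)\}$ of the fibre. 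Existence of at least one crossing is clear, since the pixel centre enclosing $y-r\,\n_{F}(y)$ lies within $\varepsilon/\sqrt2<r$ of it, hence strictly inside $F$, so $y-r\,\n_{F}(y)\in F^{\varepsilon}$, while symmetrically $y+r\,\n_{F}(y)\notin F^{\varepsilon}$.

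The single-crossing (monotonicity) assertion is where the resolution hypothesis $\varepsilon<\rho/\sqrt2$ is genuinely used, and I expect it to be the main obstacle. Moving along $z(t)=y+t\,\n_{F}(y)$, each Cartesian coordinate of $z(t)$ is monotone in $t$, so the centre of the pixel containing $z(t)$ traces a monotone lattice staircase; what must be excluded is that the membership ``centre $\in F$'' flickers along this staircase. Here the rolling balls pinch $\partial F$, on any window of size $O(\varepsilon)$ about $y$, between the arcs of radius $\rho$ bounding the inner and outer osculating balls $B_{y},B'_{y}$; hence on that window $F$ differs from the tangent half-plane $H_{y}=\{z:\langle z-y,\n_{F}(y)\rangle\le0\}$ only within a boundary strip of width $O(\varepsilon^{2}/\rho)$. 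For the genuine half-plane $H_{y}$ the staircase membership is monotone by the coordinatewise monotonicity just noted, and the bound $\varepsilon<\rho/\sqrt2$ keeps the curvature strip thin enough that no pixel centre can fall on the ``wrong'' side; this transfers monotonicity to $F$ and yields the single crossing. Continuity of $g$ then follows from compactness together with the single-crossing property.

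Granting the graph lemma, I would finish by setting $h$ equal to the identity off $\partial F^{\oplus r}$ for a fixed $r\in(\varepsilon/\sqrt2,\rho)$, and, in normal coordinates, $h(\Psi(y,t))=\Psi(y,\phi_{y}(t))$, where $\phi_{y}:(-r,r)\to(-r,r)$ is the piecewise-linear self-homeomorphism fixing $\pm r$ and sending $0$ to $g(y)$. Continuity of $g$ and the fixing of the tube boundary make $h$ a homeomorphism of $\mathbb{R}^{2}$, and by construction it preserves each fibre and maps its inner half $\{t\le0\}$ onto $\{t\le g(y)\}$, so $h(F)=F^{\varepsilon}$. The hypothesis $d(\partial C,\partial C')>4\varepsilon$ enters precisely to ensure that the tubes $\partial C^{\oplus r}$ around distinct components are pairwise disjoint for a common $r>\varepsilon/\sqrt2$, so that the fibrewise definition of $h$ is globally unambiguous and no digitised piece can reach across to another; this keeps $\Psi$ an embedded collar on all of $\partial F$ simultaneously. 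One could alternatively invoke Lemma \ref{lm:PhiX}, which rules out pinch points of $F^{\varepsilon}$, together with the planar classification of compact surfaces with boundary; but the normal-fibre construction above is purely metric and is the one that should extend to higher dimensions.
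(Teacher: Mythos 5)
Your overall architecture (collar coordinates $\Psi(y,t)=y+t\,\n_{F}(y)$, a fibrewise push of $\partial F$ onto $\partial F^{\varepsilon}$, the identity outside the tube, and the separation hypothesis to keep the tubes of distinct components disjoint) is coherent, and several of the supporting estimates are right: $\partial F^{\varepsilon}\subseteq (\partial F)^{\oplus \varepsilon/\sqrt2}$, the exact identity $d(y+t\n_{F}(y),\partial F)=|t|$ for $|t|<r<\rho$, and the deduction of continuity of $g$ from the single-crossing property. But the proof has a genuine gap exactly where you anticipated it: the single-crossing (``graph'') lemma is not established, and the argument you give for it does not work. The claim that the curvature strip is ``thin enough that no pixel centre can fall on the wrong side'' of the tangent half-plane $H_{y}$ is quantitatively false at the stated resolution. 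The pixels whose membership is ambiguous are those traversed by the fibre for $|t|\leqslant \varepsilon/\sqrt2$; their centres lie within roughly $\sqrt2\,\varepsilon$ of $y$, where the two osculating discs of radius $\rho$ only pinch $\partial F$ to within a strip of thickness of order $\varepsilon^{2}/\rho$. Under the hypothesis $\varepsilon<\rho/\sqrt2$ this thickness can be of order $\varepsilon$, i.e.\ comparable to the lattice spacing, so several consecutive centres of the staircase can sit inside the strip and nothing prevents their $F$-membership from being non-monotone (take $\n_{F}(y)$ at $45^{\circ}$, so consecutive staircase centres advance by only $\varepsilon/\sqrt2$ in the normal direction while drifting by $\varepsilon$ tangentially; a $\rho$-admissible bulge of the boundary can then reorder them). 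Consequently a fibre may meet $\partial F^{\varepsilon}$ more than once, $g(y)$ is not well defined, and the fibrewise homeomorphism cannot be assembled. Since this lemma is the linchpin of your construction, the proof is incomplete; I do not see how to repair it without either a substantially smaller constant in place of $\rho/\sqrt2$ or a genuinely different argument.

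It is worth noting that the paper avoids this local analysis entirely. It first reduces to the case where $F$ is simply connected (the hypothesis $d(\partial C,\partial C')>4\varepsilon$ lets one compose homeomorphisms component by component, each being the identity off $(\partial C)^{\oplus 2\varepsilon}$); then it proves only that $F^{\varepsilon}$ and $(F^{\varepsilon})^{c}$ are connected, by writing $F=(F^{\ominus r})^{\oplus r}$, showing $F^{\ominus r}$ and $(F^{c})^{\ominus r}$ are connected via the retraction along normals, and observing that the digitalisation of an $r$-dilation of a connected set is grid-connected when $\varepsilon\leqslant 2^{-1/2}r$; finally the Jordan--Schoenflies theorem (plus the absence of $X$-configurations from Lemma \ref{lm:PhiX}) produces the homeomorphism with no need to control how $\partial F^{\varepsilon}$ sits over $\partial F$ fibre by fibre. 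This is essentially the alternative you mention and set aside in your last sentence; at this resolution it appears to be the only one of the two routes that closes.
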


 \begin{proof}

 We start with a lemma.
\begin{lemma} 
\label{lm:homeomorphic}
Let $A$ be a connected set, $r>0$ and $0<\varepsilon \leqslant 2^{-1/2}r$. Then $[A^{\oplus r}]^{\varepsilon }$ is connected in $Z_{\varepsilon }$, whence $(A^{\oplus r})^{\varepsilon }$ is connected in $\mathbb{R}^{2}$.
\end{lemma}
\begin{proof} 
Let $x,y\in \mathbb{R}^{2}$ such that $\|x-y\|\leqslant r$, $B=B(x,r),B'=B(y,r)$. It is clear that $[B]^{\varepsilon }$ and $[B']^{\varepsilon }$ are grid-connected sets. Since $B\cap B'$ contains a ball with radius $r/2$, and therefore a square of side-length $2^{-1/2}r\geqslant \varepsilon $,  there is one point of $Z_{\varepsilon }$ in $B\cap B'$, which connects $[B\cup B']^{\varepsilon }=[B]^{\varepsilon }\cup [B']^{\varepsilon }$.

From there, given a sequence of points $x_{1},\dots ,x_{q}\in A$ such that $\|x_{i}-x_{i+1}\|\leqslant r$, $[\cup _{i}B(x_{i},r)]^{\varepsilon }$
is connected. Any given two points of $A$ can be connected by such a sequence, whence $[A^{\oplus r}]^{\varepsilon }$ is connected.
\end{proof}

 Assume first  that $F$ is simply connected, or equivalently 
 that $F$ and $F^{c}$ are connected.  Covering $\partial F$ with a finite number of balls inside which $\partial F$ can be represented as the graph of a $\mathcal{C}^{1}$ function, $F$ can also be represented as the bounded complement of the simple $\mathcal{C} ^{1}$ Jordan curve formed by $\partial F$.

Let us prove that $F^{\varepsilon }$ and $(F^{\varepsilon })^{c}$ are also  connected, i.e. that $F^{\varepsilon }$ is simply connected, and therefore homotopy equivalent to $F$.
Let $r<\rho $ such that $\varepsilon <2^{-1/2}r$. 

\begin{lemma} $F^{\ominus r}$ and $(F^{c})^{\ominus r}$ are connected.\end{lemma}

\begin{proof}   Since $\partial F$ has reach at least $r$, introduce the mapping 
\begin{align*}
\psi _{r}(x)=\begin{cases}
\pi _{\partial F}(x)-r\n_{F}(\pi _{\partial F}(x))$ if $x\in F,d(x,\partial F)\leqslant   r\\
x\text{ if }x\in F,d(x,\partial F)\geqslant  r.
 \end{cases}
 \end{align*}
 The definition makes sense because if $x$ is at distance exactly $r$ from $\partial F$, then the normal to $F$ in $\pi _{\partial F}(x)$ is  colinear to $x-\pi _{\partial F}(x)$ whence $x=\pi _{\partial F}(x)-r\n_{F}(x)=\psi _{r}(x)$. We also have, using Proposition \ref{prop:poperties-rollingball}, that if $d(x,\partial F)\leqslant r, d(\psi _{r}(x),\partial F)=r$.

By classical results about sets with positive reach, $\pi _{\partial F}$ is continuous on $\partial F^{\oplus r}$, see for instance Federer \cite{Fed} Th.4.8-(4).  Since $\n_{F}(\cdot )$ is Lipschitz on $\partial F$, $\psi _{r}$ is continuous on the closed set $F\cap (\partial F)^{\oplus r}$. 
Let $x\in F$. It is clear that $\psi _{r}$ is continuous in $x$ if $d(x,\partial F)\neq r$. Otherwise, we know that for $\varepsilon >0$ there is $\eta >0$ such that for $y\in F\cap B(x,\eta )\cap \partial F^{\oplus r},  | \psi _{r}(x)-\psi _{r}(y) | \leqslant \varepsilon $. It follows that $\psi _{r}$ is continuous on $F$. 
Therefore $F^{\ominus r}$ is connected as the image of the connected set $F$ under the continuous mapping $\psi _{r}$. 
Defining similarly
\begin{align*}
\psi' _{r}(x)=\begin{cases}
\pi _{\partial F}(x)+r\n_{F}(\pi _{\partial F}(x))$ if $x\in F^{c},d(x,\partial F)\leqslant   r\\
x\text{ if }x\in F^{c}\setminus (\partial F)^{\oplus r}.
 \end{cases}
 \end{align*}provides a continuous mapping from $F^{c}$ to $(\text{cl}(F^{c}))^{\ominus r}$.
 \end{proof}
 It follows by Lemma \ref{lm:homeomorphic}
 that $(F)^{\varepsilon }=((F^{\ominus r})^{\oplus r})^{\varepsilon }$ is connected, using Theorem \ref{th:blaschke}-(iii). Similarly
 $(F^{c})^{\varepsilon }=(F^{\varepsilon })^{c}$ is connected, i.e. $F^{\varepsilon }$ is simply connected.
 
 The fact that $[F]^{\varepsilon }$ is simply grid-connected and  $\Phi _{X}^{\varepsilon }([F]^{\varepsilon })=0$ (Lemma \ref{lm:PhiX}) yields that the boundary of $F^{\varepsilon }$ can also be represented by a continuous Jordan curve. 
   Recall the Brouwer-Schoenflies Theorem \cite{Cairns}, that states that given two  Jordan curves in $\mathbb{R}^{2}$, the bounded components $A,A'\subseteq \mathbb{R}^{2}$ of the respective complements of the Jordan curves   are homeomorphic. It is not hard to modify the result to see that if $A$ and $A'$ (and their complements) coincide on some open subset $B$, then the homeomorphism can be chosen to be the identity on $B$.
Therefore there is a homeomorphism $\Phi _{F}$ between $F$ and $F^{\varepsilon }$, that is the identity on $\mathbb{R}^{2}\setminus (\partial F)^{\oplus 2\varepsilon }$.

Now let   $F'$ be another bounded simply connected $\rho $-regular set such that $\partial F$ and $\partial F'$ are at distance more than $4\varepsilon $. We also have that $(F')^{\varepsilon }$ is simply connected and $\Phi _{X}([F']^{\varepsilon })=0$.  Since $\partial F\cap \partial F'=\emptyset $, we either have $F\subseteq F', F'\subseteq F$, or $F\cap F'=\emptyset $. Since $\partial ((F')^{\varepsilon })\subseteq (\partial (F'))^{\oplus 2\varepsilon }$ and $\partial (F^{\varepsilon })\subseteq (\partial F)^{\oplus 2\varepsilon }$, the sets $[F]^{\varepsilon }$ and $[F']^{\varepsilon }$ respect the same inclusion hierarchy as $F$ and $F'$ (i.e. either $F^{\varepsilon }\subseteq (F')^{\varepsilon }, F^{\varepsilon }\cap (F')^{\varepsilon }=\emptyset ,$ or $(F')^{\varepsilon }\subseteq F^{\varepsilon }$). It is also clear that $\Phi _{X}^{\varepsilon }([F\cup F'])=\Phi _{X}^{\varepsilon }([F])+\Phi _{X}^{\varepsilon }([F'])=0$ because $\partial F$ and $\partial F'$ are at distance more than $4\varepsilon $. The sets on which $\Phi _{F}$ and $\Phi _{F'}$  are not the identity are resp. contained in $(\partial F)^{\oplus 2\varepsilon },(\partial F')^{\oplus 2\varepsilon }$, whence they are disjoint. Their composition $\Phi _{F}\circ \Phi _{F'}=\Phi _{F'}\circ \Phi _{F}$ gives a homeomorphism between $F\cup F'$ and $(F\cup F')^{\varepsilon }$ if $F$ and $F'$ are disjoint, or between $F\setminus F'$ and its digitalisation $(F\setminus F')^{\varepsilon }$ if $F'\subseteq F$.

Applying this process inductively  yields that given any set  $A$ built by adding or removing recursively a finite family of $\rho $-regular connected components $C_{i}$ satisfying $d(\partial C_{i},\partial C_{j})>4\varepsilon $ for $i\neq j$, $A$ and $A^{\varepsilon }$ are homeomorphic. 

The connected components of $F$ and $F^{c}$ are indeed $\rho $-regular, and their boundaries are by hypothesis at pairwise distance $>4\varepsilon $. Studying the inclusion hierarchy of the elements of $\Gamma (F)$ and $\Gamma (F^{c})$ yields that $F$ can be built by adding or removing those components in such a way, whence the result is proved for any $\rho$-regular set $F$ with $r,\varepsilon $  chosen like above.

\end{proof}

{\bf (ii)} We now have to deal with the intersection with $W$. Drop the assumption that $\partial F\cap \partial W=\emptyset $.  
Introduce the set $E=(\partial F\cap \partial W)\cup (\corners(W)\cap F)$.
 We will ``round'' the sharpness that $F\cap W$ has in each point of $E$ to have a new set $F'\subseteq W$ that is  $\rho$-regular in the neighbourhood of $E$ and coincides with $F\cap W$ away from $E$, and prove that it does not modify the topology of $F\cap W$, or that of $[F\cap W]^{\varepsilon }$.

For $z\in \partial F\cap \partial W,$ an application of the Implicit Function Theorem around $z$ yields that for $r$ small enough, $\partial F\cap B[z,r]$ can be  represented as the graph of a univariate  $\mathcal{C}^{1}$ function $\varphi $. The Lipschitzness of $\n_{F}(\cdot )$ yields the Lipschitzness of $\varphi' $, which in turn yields that for $r,\varepsilon $ sufficiently small,     $\partial F\cap W\cap  B[z,r]$ is connected and $[F\cap W]^{\varepsilon }\cap B[z,r]\cap W\neq \emptyset $.
Fix $0<r<\rho $ such that 
\begin{itemize}
\item  For $x\in \partial F\cap \partial W$ and $r'\leqslant r,$    $\partial F\cap B[x,r']\cap W$ is connected, { and for $\varepsilon  \leqslant r'$ sufficiently small}, $[F\cap W]^{\varepsilon }\cap B[z,r']\cap W\neq \emptyset $ and $[F^{c}\cap W]^{\varepsilon }\cap B[z,r']\cap W\neq \emptyset $, see above.
\item  Points of $E$ are at pairwise $\|\cdot \|_{\infty }$-distance strictly more than $2r$.
\item For $x\in \partial F\cap \partial W,y\in B[x,r]\cap \partial F$, $\n_{F}(y)$ is not colinear with $\n_{W}(x)$.
\end{itemize}

Let $z\in \partial F\cap \partial W$. Up to doing rotations, assume that  $\n_{W}(z)=-\u_{2}$.
   Below and  in the sequel of the proof, $B:=B[z,r]$.
 We introduce a result proving that removing $F\cap W\cap \text{\rm{int}}(B)$ form $F\cap W$  does not change its \EC. Denote, for $x\in \mathbb{R}^{2}$, $L_{x}:=\{x-t\u_{1};t\geqslant 0\}$, the half-line on the left of $x$. 
 \begin{lemma} 
\label{lm:local-modif}
 Let $A\in \mathcal{A}$ be such that \begin{enumerate}
\item for $x\in A\cap B, (L_{x}\cap B)\subseteq A$
\item $A\cap \partial B$ is non-empty and connected
\item $B\setminus A\neq \emptyset .$
\end{enumerate}Then $A\setminus \text{\rm{int}}(B)\in \mathcal{A}$ and $\chi (A)=\chi (A\setminus  \text{\rm int}(B))$.
 \end{lemma}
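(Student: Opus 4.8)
The plan is to reduce the statement to two elementary facts: that both $A\cap B$ and $A^{c}\cap B$ are path-connected, and that excising $\text{int}(B)$ neither splits nor merges the relevant components of $A$ and of $A^{c}$. Throughout I write $B=B[z,r]$ and exploit hypothesis (1), which says that $A\cap B$ is \emph{left-filled}: each horizontal slice $A\cap B\cap\{y_{[2]}=c\}$ is a segment anchored at the left edge of $B$, so that $A^{c}\cap B$ is correspondingly right-filled.

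First I would record the structure on $\partial B$, which is a topological circle. Hypothesis (2) gives that $A\cap\partial B$ is a nonempty connected subset; hypothesis (3) together with left-filledness forces $A^{c}\cap\partial B\neq\emptyset$, since $\partial B\subseteq A$ would put the rightmost point of every slice in $A$, hence the whole slice, hence $B\subseteq A$. Thus $A\cap\partial B$ is a proper arc and its complement $A^{c}\cap\partial B$ is the complementary arc, in particular connected. I would then deduce that $A\cap B$ is path-connected: any $x\in A\cap B$ is joined inside $A\cap B$ to the left-edge point of its slice along the segment $L_{x}\cap B\subseteq A$, and all these left-edge points lie in the connected set $A\cap\partial B$. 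The symmetric argument, pushing points horizontally onto the right edge and using connectedness of $A^{c}\cap\partial B$, shows $A^{c}\cap B$ is path-connected as well.

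Next I would count components. Since $A\cap B$ is connected it lies in a single component $C$ of $A$, and no other component can meet $B$ (it would share $A\cap B\subseteq C$ with $C$). I claim $C':=C\setminus\text{int}(B)$ stays connected: given two points of $C'$, a path joining them in $C$ can be rerouted, replacing each maximal excursion into $\text{int}(B)$ — which enters and exits on $\partial B$ at points of $A\cap\partial B$ — by a path inside the connected set $A\cap\partial B\subseteq C'$. As removing the bounded set $\text{int}(B)$ leaves the boundedness type of $C$ unchanged, this yields a boundedness-preserving bijection between $\Gamma(A)$ and $\Gamma(A')$, so $\#\Gamma(A)=\#\Gamma(A')$. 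For the complement I argue dually: connectedness of $A^{c}\cap B$ means a unique component $D$ of $A^{c}$ meets $B$, and $(A')^{c}\cap B=\text{int}(B)\cup(A^{c}\cap\partial B)$ is connected (each boundary point attaches to $\text{int}(B)$ by an inward segment) and joins onto $D$. No component of $A^{c}$ disjoint from $B$ can be absorbed, since any path connecting it to $\text{int}(B)$ in $(A')^{c}=A^{c}\cup\text{int}(B)$ must cross $\partial B$ through $A^{c}\cap\partial B\subseteq D$. Hence exactly one component is enlarged, with its boundedness unchanged, giving $\#\Gamma(A^{c})=\#\Gamma((A')^{c})$. Together these show $A'\in\mathcal{A}$ and $\chi(A)=\chi(A')$.

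The main obstacle is the passage from the single connectivity hypothesis (2) to genuine connectedness of both $A\cap B$ and $A^{c}\cap B$; this hinges on the circle fact that a connected proper subset of $\partial B$ has connected complement, combined with the use of hypothesis (1) to slide points horizontally onto $\partial B$ while remaining in $A$, respectively $A^{c}$. Once both sets are connected, the rerouting arguments are routine, and the only care required is to phrase everything in terms of paths — as in the definition of $\Gamma$ — and to verify that excising the bounded set $\text{int}(B)$ never alters the boundedness type of the one affected component on either side.
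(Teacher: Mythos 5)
Your proof is correct and follows essentially the same route as the paper's: hypothesis (1) makes $A\cap B$ left-filled and $A^{c}\cap B$ right-filled, both sets are connected by sliding points horizontally onto the boundary arcs $A\cap\partial B$ and $A^{c}\cap\partial B$, and excising $\mathrm{int}(B)$ then neither splits the unique component of $A$ meeting $B$ nor merges or creates components of $A^{c}$. You supply a few details the paper leaves implicit (notably that $A^{c}\cap\partial B$ is nonempty and connected, and the rerouting of paths through the boundary arc), but the decomposition and the key ideas coincide.
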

 
 \begin{proof} 
 The first hypothesis yields 
  $A\cap B=\cup _{x\in A\cap B}L_{x}\cap B.$ Since $\partial B\cap A$ is connected, two points $x,x'$ of $B\cap A$ are connected in $A$ through $((L_{x}\cup L_{x'})\cap B)\cup (\partial B\cap A)$. 
 Removing $A\cap \text{\rm{int}}(B)$ from $A\cap B$ amounts to removing a piece of the connected component of $A$ containing $\partial B\cap A$, without splitting it (because $\partial B\cap A$ keeps it connected). Therefore, $\#\Gamma (A)=\#\Gamma (A\setminus\text{\rm{int}}(B))$.    Since $ \partial B\setminus A$ is not empty and connected, a similar representation involving the $R_{x}:=\{x+t\u_{1},t>0\},x\in B$ yields $\#\Gamma (A^{c})=\#\Gamma (A^{c}\setminus \text{int}(B)).$ We also  easily have that $A^{c}\cap \partial B$ is non-empty and simply connected, thus $\#\Gamma (A^{c}\setminus \text{\rm{int}}(B))=\#\Gamma (A^{c}\cup \text{\rm{int}}(B))$. It then yields
\begin{align*}
\#\Gamma (A^{c})=\#\Gamma (A^{c}\cup \text{\rm{int}}(B))=\#\Gamma ((A\setminus \text{\rm{int}}(B))^{c}),
\end{align*}and $\chi (A)=\chi (A\setminus \text{\rm{int}}(B))$. 
  \end{proof}

We apply this Lemma to $  A=F\cap W$.  The definition of $r$ yields that $\partial F\cap B$ is connected and that   $n_{F}(\cdot )_{[1]}  >0$ on  $ \partial F\cap B$. 
Let us prove that for  $x\in A, (L_{x}\cap B)\subseteq A$. The fact that $(L_{x}\cap B)\subseteq W$ stems from the fact that the closest corners of $W$ are at $\|\cdot \|_{\infty }$-distance at least $2r$ by hypothesis on $r$.
 Let $s=\inf\{t>0:x-t\u_{1}\notin F\}$. 
 Put $y=(x_{[1]},x_{[2]}-s)$, $y$ is in $ \partial F$. Since the part of $L_{x}$ immediately on the right of $y$ is in $F$, the outside rolling ball of $F$ in $y$  contains a portion of $L_{x}$ on the left of $y$, whence  the unit vector between $y$ and the centre of this ball, $\n_{F}(y)$, satisfies $\n_{F}(y)_{[1]}<0$. It follows that $y\notin B$, whence $(L_{x}\cap B)\subseteq F$. This property, combined with the connectedness of $\partial F\cap B$, entails that $\partial F\cap B\cap W$ is connected, whence Lemma \ref{lm:local-modif} applies. The same reasoning still holds if the value of $r$ is decreased.

Call 
\begin{align*}
C_{r}=\bigcup _{z\in E}B[z,r].
\end{align*}  Repeat the  procedure above for every point of $E$. 
Even though points of $\text{\rm{corners}}(W)\cap F$ are not points of $\partial F\cap \partial W$, they can geometrically be treated in the same way.
As the $B[z,r],z\in E$, don't overlap,  $\chi (F\cap W)=\chi( (F\cap W)\setminus \text{int}(C_{r}))$, and this equality holds for smaller values of $r$.

The idea is now to replace  $F\cap W$  by a smooth connected set $F'$ that coincides with $F\cap W$ outside $B$, and do the same around every other point of $E$. Let   $0<r' <r/5$.  Introduce
\begin{align*}
F'=((F\cap  W)^{\ominus r'})^{\oplus r'},
\end{align*} and put $B'=B[z,3r']$. 
Recall that by Theorem \ref{th:blaschke}-(iii), since $r'<\rho $, $F=(F^{\ominus  r'})^{\oplus r'}$.
Let $x\in W\setminus \partial W^{\oplus2 r'}.$ Note that every $y$ such that $x\in B(y,r')$ satisfies $B(y,r')\subseteq W$. Then,
\begin{align*}
x\in F'&\Leftrightarrow x\in B(y,r')\text{ for some }y\in F\cap W\text{ such that }B(y,r')\subseteq F\cap W\\
&\Leftrightarrow x\in B(y,r')\text{ for some }y\in F\text{ such that }B(y,r')\subseteq F\\
&\Leftrightarrow x\in (F^{\ominus r'})^{\oplus r'}=F,\\
&\Leftrightarrow x\in F\cap W.
\end{align*}
 This yields
\begin{align}
\label{eq:coincide-Fprime-1}
F'\setminus \partial W^{\oplus 2r'}=(F\cap W)\setminus \partial W^{\oplus 2r'}.
\end{align}

For similar reasons, if $x\in W$ is at distance more than $2r'$ from $\partial F$ { or from a corner of $W$}, then $x\in F'\Leftrightarrow x\in (F\cap W)$.  This yields
\begin{align}
\label{eq:coincide-Fprime-2}
F'\setminus (\partial F\cup \corners(W))^{\oplus2 r'}=(F\cap W)\setminus (\partial F\cup \text{corners}(W))^{\oplus2 r'}.
\end{align}
Intersecting 
\eqref{eq:coincide-Fprime-1}
and 
\eqref{eq:coincide-Fprime-2}
yields $(F\cap W)\setminus C_{2r'}=F'\setminus C_{2r'}$.    It entails in particular $\partial B'\cap F'= \partial B'\cap (F\cap W)$, and it is a connected set because $r'\leqslant r$.

 Therefore, to apply Lemma \ref{lm:local-modif} to $F'$ and $B'$, it remains to show that for $x\in F'\cap B'$, $(L_{x}\cap  B')\subseteq F'$.
A point $x$ of $B'$ is in $F'$ if  there is a ball $B(y,r')\subseteq (F\cap W)$ such that $x\in B(y,r')$. Since $x\in B'$ and ${5r'<r},$ $B(y,r')\subseteq  (F\cap W)\cap B$. We already proved that for all point $w$ of $B(y,r')$, since $w\in F\cap W\cap B$, $(L_{w}\cap B)\subseteq F\cap W$, whence in particular, for $t\geqslant 0$, $(B(y-t\u,r')\cap B)\subseteq (F\cap W\cap B)$. Each point $w=x-t\u_{1}$ of $L_{x}\cap B'$ is  in $B(y-t\u,r')$, whence it is in $F'$. Therefore, Lemma \ref{lm:local-modif} applies. Applying the same reasoning in each point of $E$ entails 
\begin{align*}
\chi (F')=\chi (F'\setminus \text{\rm{int}}(C_{2r'}))=\chi ((F\cap W)\setminus \text{\rm{int}}(C_{2r'}))=\chi (F\cap W).
\end{align*}

Since $F'$ satisfies Theorem \ref{th:blaschke}-(iii), $F'$ is $\rho$-regular.
{  Let us decrease the value of  $ \varepsilon$ so that $2\varepsilon <r'$ and  $(F')^{\varepsilon }$ is homeomorphic to $F'$ (see Lemma \ref{lm:reg-set-approx})}. In particular, $\chi ((F')^{\varepsilon })=\chi (F')=\chi (F\cap W)$. The end of the proof consists in showing $\chi ((F')^{\varepsilon })=\chi ((F\cap W)^{\varepsilon })$, and computing the latter quantity.
 
Using the fact that Lemma \ref{lm:local-modif} applies to $F'$ and $B'$, we can easily prove that given a point $x\in (F')^{\varepsilon }\cap B'$, all the points $x-t\u_{1},t>0$ that are in $B'$ are also in $(F')^{\varepsilon }$. 
Recall that $r,\varepsilon $ have been chosen so that $[F\cap W]^{\varepsilon }\cap B[z,s]\neq \emptyset ,[F^{c}\cap W]\cap B[z,s]\neq \emptyset $ for $s\leqslant r$. 
 Therefore, applying Lemma \ref{lm:local-modif} gives $\chi ((F')^{\varepsilon })=\chi ((F')^{\varepsilon }\setminus C_{2r'})$. 
A similar reasoning gives that $\chi ((F\cap W)^{\varepsilon })=\chi ((F\cap W)^{\varepsilon }\setminus \text{\rm{int}}(C_{2r'}))=\chi ((F\cap W)^{\varepsilon }\setminus \text{\rm{int}}(C_{2r'}))$.

Since $F'$ and $F\cap W$ coincide outside of $ B[z,2r']$, only pixels centred in $B[z,2r']$ can differentiate $[F']^{\varepsilon }$ and $[F\cap W]^{\varepsilon }$. 
Since $2r'+2\varepsilon <3r'$, a pixel centred in $B[z,2r']$ cannot reach $\partial B'$, whence
\begin{align*}
(F')^{\varepsilon }\setminus \text{\rm{ int }}(B')=(F\cap W)^{\varepsilon }\setminus \text{\rm{ int }}(B'),
\end{align*}
and applying a similar reasoning in every point of $E$ yields that $\chi (F\cap W)=\chi (F')=\chi  ((F')^{\varepsilon })=\chi ((F\cap W)^{\varepsilon })$. Remark that, as announced, this equality holds for $\varepsilon $ smaller than some value $\varepsilon (F,W)$ that is invariant under simultaneous translations of $F$ and $W$.

\textsl{  Proof of \eqref{eq:euler-bicovariograms}}.
 Let $\varepsilon <\varepsilon (F,W)=\varepsilon (F+\varepsilon y,W+\varepsilon y),y\in \mathbb{R}^{2}$. We proved that $\Phi ^{\varepsilon }_{X}([F\cap W]^{\varepsilon })=0$, whence $\chi (F\cap W)^{\varepsilon })=\chi ^{\varepsilon }([F\cap W]^{\varepsilon })$. Using \eqref{eq:discrete-EC} yields \eqref{eq:euler-pixelized}, and
    \begin{align*}
\chi (F\cap W)&=\int_{[0,1)^{2}}\chi ((F\cap W)+\varepsilon y)dy=\int_{[0,1)^{2}}\chi ^{\varepsilon }([(F\cap W)+\varepsilon y]^{\varepsilon })dy\\
&=\sum_{x\in \varepsilon \mathbb{Z} ^{2}} \int_{[0,1)^{2}}\chi ^{\varepsilon } (x,F\cap W+\varepsilon y)dy\\
&=\sum_{x\in  \mathbb{Z} ^{2}} \int_{[0,1)^{2}}\chi ^{\varepsilon }(\varepsilon x,F\cap W+\varepsilon y)dy\\
&=\sum_{x\in \mathbb{Z} ^{2}} \int_{[0,1)^{2}}\chi ^{\varepsilon }(\varepsilon (x-y),F\cap W)dy=\int_{\mathbb{R}^{2}}\chi ^{\varepsilon }(\varepsilon y,F\cap W)dy\\
&=\varepsilon ^{-2}\int_{\mathbb{R}^{2}}\chi ^{\varepsilon }(y,F\cap W)dy=\varepsilon ^{-2}(\delta_0 ^{\varepsilon \u_{1},\varepsilon \u_{2}}-\gamma _{-\varepsilon \u_{1},-\varepsilon \u_{2}}^{0})(F\cap W).
\end{align*}

  \subsection{ Proof of  Theorem \ref{th:bound-pixel-comp}}
  \label{sec:proof-entang}
  
 We call $D_{1},\dots ,D_{m}$ the connected components of $(F\cap W)\cup (F\cap W)^{\varepsilon }$, and for $1\leqslant i\leqslant m$, call $C_{i,j}^{\varepsilon },1\leqslant j\leqslant k_{i}$, the connected components of $(F\cap W)^{\varepsilon }$ hitting $D_{i}$. Since a connected component of $(F\cap W)^{\varepsilon }$ cannot hit   $2$ distinct $D_{i}$'s, we have $\#\Gamma ((F\cap W)^{\varepsilon })=\sum_{i=1}^{m}k_{i}$.
Since a connected component of $F\cap W$ cannot hit two distinct $D_{i}$'s either,  
 $
m\leqslant \#\Gamma (F\cap W).
 $
We yet have to control the number of components of $(F\cap W)^{\varepsilon }$ that hit a given $D_{i}$. Define  $C_{i,j}=[C_{i,j}^{\varepsilon }] $ (not necessarily connected in $\varepsilon \mathbb{Z} ^{2}$ as pixels of $C_{i,j}^{\varepsilon }$ might only touch through a corner, but it has no consequences). 
 
 The plan of the rest of the proof is the following: we are going to associate to each $C_{i,j},1\leqslant i\leqslant m,j>1,$ either a corner of $W$, an element of $\N_{\varepsilon }'(F,W)$, or an element of $\N_{\varepsilon} (F)\cap W^{\oplus \varepsilon }$, in such a way that this element cannot be associated to more than two distinct  $C_{i,j}$. This will indeed yield the bound \eqref{eq:bound-Gamma-eps-W}.

Here again, the boundary effects of the intersection with $W$ are cumbersome for the proof, and require to introduce some notation. Recall that $\partial ^{\varepsilon }[W]$ stands for the set formed by the elements of $[W]^{\varepsilon } $ that have at least one grid neighbour outside $W$.  
Assume that some $C_{i,j},j\geqslant1,$ touches $\partial ^{\varepsilon }[W]$. Call $\emph{root}$ of $C_{i,j}$ (or $C_{i,j}^{\varepsilon })$  a   horizontal or vertical   pixel interval $\llbracket x,y\rrbracket$ such that \begin{itemize}
\item $\llbracket x,y\rrbracket \subseteq   \partial ^{\varepsilon }[W]$
\item  $\llbracket x,y\rrbracket \cap  C_{i,j}\neq \emptyset $
\item every $z\in \llbracket x,y\rrbracket $ has at least a grid neighbour in $F$
\item for $(i',j')\neq (i,j)$, $\llbracket x,y\rrbracket \cap C_{i',j'}= \emptyset $
\item it is maximal, in the sense that no other pixel interval satisfying these properties strictly contains $\llbracket x,y\rrbracket$. 
\end{itemize}

 Call $\root{C_{i,j}}\subseteq  Z_\varepsilon  $ the union of $C_{i,j}$ with its roots, and $\root{C_{i,j}^{\varepsilon }}$ the  union of pixels centred in $\root{C_{i,j}}$ . See Figure \ref{fig:legs} for examples. 
 Fix $1\leqslant i\leqslant m,$ such that $k_{i}>1$, and let $1<j\leqslant k_{i}$. We distinguish between three cases.

 \begin{figure} 
 \centering
  \caption{  \label{fig:legs}A set where two disjoint components $C_{i,j}$ have roots. One component has two roots and another has only one. The pixels centred in roots are coloured in grey,  root overlap is represented in darker grey.}
   \includegraphics[scale=.2]{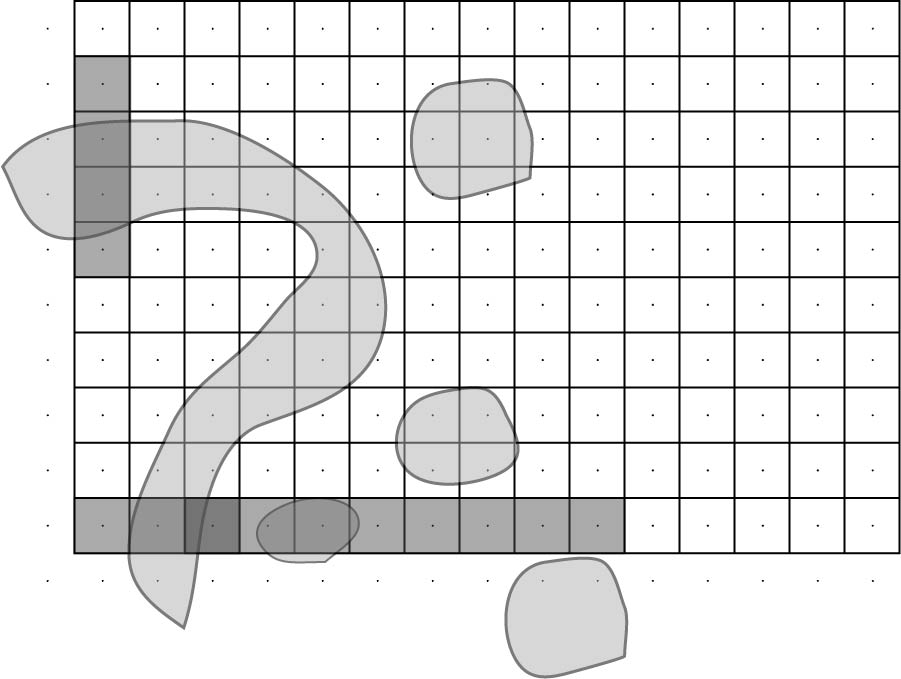} 
 \end{figure}

{\textbf{(i)}}  If   a root of $C_{i,j}$  contains a corner at one of its extremities (as is the case for instance for a root of the big component in Figure \ref{fig:legs}), associate this corner to $C_{i,j}^{\varepsilon }$, and remark that a corner cannot be associated to more than two connected components of  $(F\cap W)^{\varepsilon }$, using the fact that a root of a $C_{i,j}$ cannot contain a root of a $C_{i',j'}$, $(i',j')\neq (i,j)$.

\textbf{ (ii)} Assume that  no corner can be associated to $C^{\varepsilon }_{i,j}$, i.e. that none of its roots touches a corner, but that  for some $(i',j')\neq (i,j), $ $\root{C_{i,j}}\cap \root{C_{i',j'}}\neq \emptyset $. Let  $I=\llparenthesis x,y\rrparenthesis\subseteq  (\root{C_{i,j}}\cap \root{C_{i',j'}}) $ be a maximal  grid interval of this intersection, with $x,y\in  Z_\varepsilon $.  Since any root of $C_{i,j}$ does not touch $C_{i',j'}$, and vice-versa, $I\subseteq [F^{c}]$. There is an example of such a root intersection in Figure \ref{fig:legs}.
 
 One extremity of $I$, say $x$, is in $C_{i,j}$, and the other, $y$, is in $C_{i',j'}$.
 Since $\llparenthesis x,y\rrparenthesis \subseteq (\root{C_{i,j}}\setminus C_{i,j}),$ the definition of a root entails that all the points of $\llparenthesis x,y\rrparenthesis$ are in $[F^{c}]$ and are at distance $\leqslant  \varepsilon $ from $F$. Therefore $\{x,y\}\in \N_{\varepsilon }'(F,W)$. Also, $\{x,y\}$ can only be associated to $C_{i,j}$ and $C_{i',j'}$ in this manner. 

\textbf{(iii)} Assume that $C_{i,j}$ does not fit in {\bf (i)} or in {\bf (ii)}. 
 Recall that we assumed  $k_{i}>1$ and $1<j\leqslant  k_{i}$. There is a continuous path $\gamma:[0,1]\to D_{i} $ going from $ F^{\varepsilon }\setminus C_{i,j}^{\varepsilon }$ to   $C_{i,j}^{\varepsilon }$ with $\gamma ((0,1))\subseteq F\setminus F^{\varepsilon }$. In particular, $\gamma((0,1)) $ avoids $[F]$. 
Discarding the two first cases means that somehow $\gamma $ cannot arrive to $C_{i,j}^{\varepsilon }$ from another component $C_{i,j'}^{\varepsilon }, j'\neq j,$ by creeping along an edge of $W$ adjacent to $C_{i,j}$, if there is such. More precisely, There is no $(i,j')\neq (i,j)$ such that $\root{C_{i,j}}\cap  \root{C_{i,j'}}\neq \emptyset $.

Now, say that $x,y\in   Z_\varepsilon $ form a boundary pair for $\root{C_{i,j}}$, denoted $\{x,y\}\in \Delta  \root{C_{i,j}}$, if the square $\p_{x,y}$ has exactly one edge touching $\root{C_{i,j}^{\varepsilon }}$. Remark that only one of the two edges not containing $x$ or $y$ can fulfill this condition, and  the corresponding component of $\p'_{x,y}$, denoted $\p_{x,y}^{in}$, touches $\root{C_{i,j}^\varepsilon} $,  while the other, denoted $\p_{x,y}^{\text{out}}$, does not.

The point $\gamma (0)$ is on the boundary of a pixel that is not contained in $\root{C_{i,j}^{\varepsilon} },$ but $\gamma (\cdot )$ eventually enters in such a pixel. Therefore, studying possible local configurations of $\root{C_{i,j}^{\varepsilon }}$'s boundary entails that there is $t>0$ such that $\gamma (t)\in \p_{x,y}^{\text{out}}$ for some $\{x,y\}\in \Delta  \root{C_{i,j}}$. Let $t^{\text{out}}$ be the latest such time.  The rest of the proof consists in showing $\{x,y\}\in \N_{\varepsilon }(F)$.

Since $\root{C_{i,j}^{\varepsilon} }$ is only in contact with such $\p_{x,y}$ through $\p_{x,y}^{in}$, $\gamma (t)$ will eventually have to get out of the pixel $\p_{x,y}$ at some time $t^{in}$, without touching $\p_{x,y}^{\text{out}}$ because the latest such time has already been reached, therefore $\gamma (t^{in})\in \p_{x,y}^{in}$, and $\gamma ([t^{\text{out}},t^{in}])\subseteq \p_{x,y}$ (also note that $\gamma $ cannot pass through $x$ or $y$ because $\gamma ((0,1))\subseteq (F\setminus Z_{\varepsilon }$)).
Therefore $\tilde \gamma \subseteq F$ connects the two components of $\p'_{x,y}$ through $\p_{x,y}$. This is the primary condition for $\{x,y\}\in \N_{\varepsilon }(F)$.

 To complete the proof that $\{x,y\}\in \N_{\varepsilon }(F)\cap W^{\oplus \varepsilon }$, it remains to show that $x,y\in [F^{c}]\cap W^{\oplus \varepsilon }$. Since $x,y\in \tilde\gamma ^{\oplus \varepsilon }$ and $\tilde\gamma \subseteq W$, $x,y\in W^{\oplus \varepsilon }$.
None of them is in $[F\cap W]$ otherwise it would be in a different component $C_{i',j'}^{\varepsilon }$ that would touch $C_{i,j}^{\varepsilon }$ (eventually through a corner), possibility that has been treated in \textbf{(i)} or {\bf (ii)}. They cannot be both outside $W$ because $\tilde\gamma \cap [x,y]\neq \emptyset $ and $\tilde\gamma \subseteq W$. The two last possibilities are that $x,y\in [F^{c}]$, or $y\in [W\cap F^{c}] $ and $x\in [F\cap W^{c}]$ (or the other way around). The second possibility is also discarded because it entails $y\in \root{C_{i,j}}$, which contradicts $\{x,y\}\in \Delta C_{i,j}$ (also using the fact that no corner lies in a root of $C_{i,j}^{\varepsilon }$).

We associate this pair $\{x,y\}\in \N_{\varepsilon }(F)\cap W^{\oplus \varepsilon }$ to $C_{i,j}^{\varepsilon }$ and note that since  $x$ and $y$ lie at $\|\cdot \|_{\infty }$-distance $\varepsilon/2 $ from $\root{C_{i,j}^{\varepsilon }}$, $\{x,y\}$ cannot be associated to more than two components in this way, and this concludes the proof of \eqref{eq:bound-Gamma-eps-W}.

The first inequality can easily be derived  in the same way in the case $W=\mathbb{R}^{2}$; noting that in this case $W$ has no edge and no corners. \\
}

{\bf Acknowledgement} The author is grateful to Bruno Galerne, who helped trigger the ideas contained in this paper, and to Anne Estrade, for many discussions around the topic of the \EC.

\vspace{\baselineskip}

 \bibliographystyle{plain}
    \bibliography{bicovariograms}

\begin{thebibliography}{10}

\bibitem{ABBSW}
R.~J. Adler, O.~Bobrowski, M.~S. Borman, E.~Subag, and S.~Weinberger.
\newblock Persistent homology for random fields and complexes.
\newblock {\em IMS Coll.}, 6:124--143, 2010.

\bibitem{AdlSam}
R.~J. Adler and G.~Samorodnitsky.
\newblock Climbing down {G}aussian peaks.
\newblock arXiv:1501.07151, 2015.

\bibitem{AST}
R.~J. Adler, G.~Samorodnitsky, and J.~E. Taylor.
\newblock High level excursion set geometry for non-gaussian infinitely
  divisible random fields.
\newblock {\em Ann. Prob.}, 41(1):134--169, 2013.

\bibitem{AdlTay07}
R.~J. Adler and J.~E. Taylor.
\newblock {\em Random Fields and Geometry}.
\newblock Springer, 2007.

\bibitem{AFP}
L.~Ambrosio, N.~Fusco, and D.~Pallara.
\newblock {\em Functions of Bounded Variations and Free Discontinuity
  Problems}.
\newblock Oxford University Press, 2000.

\bibitem{AKPM}
C.~H. Arns, M.~A. Knackstedt, W.~V. Pinczewski, and K.~R. Mecke.
\newblock {E}uler-{P}oincar{\'e} characteristics of classes of disordered
  media.
\newblock {\em Phys. Rev. E}, 63:031112, 2001.

\bibitem{AMMS}
C.~H. Arns, J.~Mecke, K.~Mecke, and D.~Stoyan.
\newblock Second-order analysis by variograms for curvature measures of
  two-phase structures.
\newblock {\em The European Physical Journal B}, 47:397--409, 2005.

\bibitem{AufBen13}
A.~Auffinger and G.~Ben Arous.
\newblock Complexity of random smooth functions on the high-dimensional sphere.
\newblock {\em Ann. Prob.}, 41(6):4214--4247, 2013.

\bibitem{AveBia09}
G.~Averkov and G.~Bianchi.
\newblock Confirmation of {M}atheron's conjecture on the covariogram of a
  planar convex body.
\newblock {\em Journal of the European Mathematical Society}, 11:1187--1202,
  2009.

\bibitem{AzaWsc}
J.~Aza\"is and M.~Wschebor.
\newblock A general expression for the distribution of the maximum of a
  {G}aussian field and the approximation of the tail.
\newblock {\em Stoc. Proc. Appl.}, 118(7):1190--1218, 2008.

\bibitem{BieDesEuler}
H.~Bierm\'e and A.~Desolneux.
\newblock Level total curvature integral: Euler characteristic and 2d random
  fields.
\newblock preprint HAL, No. 01370902, 2016.

\bibitem{BieDes}
H.~Bierm\'e and A.~Desolneux.
\newblock On the perimeter of excursion sets of shot noise random fields.
\newblock {\em Ann. Prob.}, 44(1):521--543, 2016.

\bibitem{BDJR}
P.~Blanchard, C.~Dobrovolny, D.~Gandolfo, and J.~Ruiz.
\newblock On the mean {E}uler characteristic and mean {B}etti's numbers of the
  {I}sing model with arbitrary spin.
\newblock {\em J. Stat. Mech.}, 2006(03):3--11, 2006.

\bibitem{Cairns}
S.~C. Cairns.
\newblock An elementary proof of the {J}ordan-{S}choenflies {t}heorem.
\newblock {\em Proc. AMS}, 2(6):860--867, 1951.

\bibitem{Mar15}
V.~Cammarota, D.~Marinucci, and I.~Wigman.
\newblock Fluctuations of the {E}uler-{P}oincar{\'e} characteristic for random
  spherical harmonics.
\newblock {\em Proc. AMS}, 144:4759--4775, 2016.

\bibitem{EstLeo14}
A.~Estrade and J.~R. Leon.
\newblock A central limit theorem for the {E}uler characteristic of a
  {G}aussian excursion set.
\newblock {\em Ann. Prob.}, 44(6):3849--3878, 2016.
\newblock Ann. Prob., to appear.

\bibitem{Fed}
H.~Federer.
\newblock Curvature measures.
\newblock {\em Trans. AMS}, 93(3):418--491, 1959.

\bibitem{Galerne}
B.~Galerne.
\newblock Computation of the perimeter of measurable sets via their
  covariogram. {A}pplications to random sets.
\newblock {\em Image Anal. Stereol.}, 30(1):39--51, 2011.

\bibitem{Hil02}
R.~Hilfer.
\newblock Review on scale dependent characterization of the microstructure of
  porous media.
\newblock {\em Transport in Porous Media}, 46(2-3):373--390, 2002.

\bibitem{HSN}
J.~Hiriart-Urrurty, J.~Strodiot, and V.~H. Nguyen.
\newblock Generalized {H}essian matrix and second-order optimality conditions
  for problems with ${C}^{1,1}$ data.
\newblock {\em Appl. Math. Optim.}, 11:43--56, 1984.

\bibitem{HLW}
D.~Hug, G.~Last, and W.~Weil.
\newblock A local {S}teiner-type formula for general closed sets and
  applications.
\newblock {\em Math. Nachr.}, 246:237--272, 2004.

\bibitem{Kid06}
M.~Kiderlen.
\newblock Estimating the {E}uler characteristic of a planar set from a digital
  image.
\newblock {\em Journal of Visual Communication and Image Representation},
  17(6):1237--1255, 2006.

\bibitem{KilFri}
J.~M. Kilner and K.~J. Friston.
\newblock Topological inference for {EEG} and {MEG}.
\newblock {\em Ann. Appl. Stat.}, 4(3):1272--1290, 2010.

\bibitem{KSS}
S.~Klenk, V.~Schmidt, and E.~Spodarev.
\newblock A new algorithmic approach to the computation of {M}inkowski
  functionals of polyconvex sets.
\newblock {\em Computational Geometry}, 34(3):127--148, 2006.

\bibitem{LacEC2}
R.~Lachi\`eze-Rey.
\newblock Euler characteristic of random fields excursions.
\newblock preprint.

\bibitem{Lan02}
C.~Lantu{\'e}joul.
\newblock {\em Geostatistical Simulation: Models and Algorithms}.
\newblock Springer, Berlin, 2002.

\bibitem{Mel90}
A.~L. Melott.
\newblock The topology of large-scale structure in the universe.
\newblock {\em Physics Reports}, 193(1):1 -- 39, 1990.

\bibitem{Mol05}
I.~Molchanov.
\newblock {\em Theory of random sets}.
\newblock Springer-Verlag, London, 2005.

\bibitem{NOP}
W.~Nagel, J.~Ohser, and K.~Pischang.
\newblock An integral-geometric approach for the {E}uler-{P}oincar\'e
  characteristic of spatial images.
\newblock {\em Journal of Microscopy}, 198:54--62, 2000.

\bibitem{Okun}
B.~L. Okun.
\newblock Euler characteristic in percolation theory.
\newblock {\em J. Stat. Phys.}, 59(1-2):523--527, 1990.

\bibitem{Schmalzing}
J.~Schmalzing, T.~Buchert, A.~L. Melott, V.~Sahni, B.~S. Sathyaprakash, and
  S.~F. Shandarin.
\newblock Disentangling the cosmic web. {I}. morphology of isodensity contours.
\newblock {\em The Astrophysical Journal}, 526(2):568, 1999.

\bibitem{SchWei}
R.~Schneider and W.~Weil.
\newblock {\em Stochastic and {I}ntegral {G}eometry}.
\newblock Probability and its Applications. Springer-Verlag, Berlin, 2008.

\bibitem{SWRS}
C.~Scholz, F.~Wirner, J.~G\"otz, U.~R\"ude, G.E. Schr\"oder-Turk, K.~Mecke, and
  C.~Bechinger.
\newblock Permeability of porous materials determined from the {E}uler
  characteristic.
\newblock {\em Phys. Rev. Lett.}, 109(5), 2012.

\bibitem{Serra}
J.~Serra.
\newblock {\em Image Analysis and Mathematical Morphology}.
\newblock Academic Press, London, 1982.

\bibitem{Sva14}
A.~Svane.
\newblock Local digital estimators of intrinsic volumes for boolean models and
  in the design-based setting.
\newblock {\em Adv. Appl. Prob.}, 46(1):35--58, 2014.

\bibitem{Sva15}
A.~Svane.
\newblock Local digital algorithms for estimating the integrated mean curvature
  of r-regular sets.
\newblock {\em Disc. Comp. Geom.}, 54(2):316--338, 2015.

\bibitem{TayWor}
J.~E. Taylor and K.~J. Worsley.
\newblock Random fields of multivariate test statistics, with applications to
  shape analysis.
\newblock {\em Ann. Stat.}, 36(1):1--27, 2008.

\bibitem{Walther}
G.~Walther.
\newblock On a generalization of {B}laschke's rolling theorem and the smoothing
  of surfaces.
\newblock {\em Math. Meth. Appl. Sci.}, 22:301--316, 1999.

\end{thebibliography}
    
 \end{document}